\definecolor{webbrown}{rgb}{0.65, 0.16, 0.16}
\numberwithin{equation}{section}
\newcommand{\ZZ}{\mathbb{Z}}
\newcommand{\ch}{{\rm ch}}
\newcommand{\dd}{\mathrm{d}}
\newcommand{\MM}{\overline{\mathcal{M}}}
\newcommand{\Res}{\mathop{\mathrm{Res}}}
\theoremstyle{plain}
	\newtheorem{theorem}{Theorem}
	\newtheorem*{theorem*}{Theorem}
	\newtheorem{proposition}[theorem]{Proposition}
	\newtheorem{corollary}[theorem]{Corollary}
	\newtheorem{lemma}[theorem]{Lemma}
	\numberwithin{theorem}{section}
	\newtheorem{theoremA}{Theorem}
\newtheorem{theoremB}{Theorem}
\newtheorem{theoremC}{Theorem}
\theoremstyle{definition}
	\newtheorem{definition}[theorem]{Definition}
\theoremstyle{remark}
	\newtheorem{remark }{Remark}
\renewcommand\section{\@startsection {section}{1}{\z@}%
                                   {-3.5ex \@plus -1ex \@minus -.2ex}%
                                   {2.3ex \@plus.2ex}%
                                   {\centering\Large\scshape}}
\title{
On ELSV-type formulae and relations between $\Omega$-integrals via deformations of spectral curves
}
\author{Ga\"{e}tan Borot}
\address{G.~B.: 
	Humboldt-Universit\"at zu Berlin, Institut f\"ur Mathematik und Institut f\"ur Physik, Unter den Linden 6, 10099 Berlin, Germany.}
\email{gaetan.borot@hu-berlin.de}
\author{Maksim Karev}
\address{M.~K.: 
	Guangdong-Technion Israel Institute of Technology, 241 Daxue Road, Guangdong, Shantou, 515063, China
}
\email{maksim.karev@gtiit.edu.cn}
\author{Danilo Lewa\'{n}ski }
\address{D.~L.: 
	Universit\'a di Trieste, Dipartimento di Matematica, Informatica e Geoscienze, via Weiss 2, 34128 Trieste, Italy
}
\email{danilo.lewanski@units.it}
\date{}
\subjclass[2010]{14N10, 14H10, 05A15, 81R10, 14H60}
\begin{document}

\begin{abstract}
The general relation between Chekhov--Eynard--Orantin topological recursion and the intersection theory on the moduli space of curves, the deformation techniques in topological recursion, and the polynomiality dependency of its deformation parameters can be combined to derive vanishing relations involving intersection indices of tautological classes. We apply this strategy to three different families of spectral curves and show they give vanishing relations for integrals involving $\Omega$-classes. The first class of vanishing relations are genus-independent and generalises the vanishings found by Johnson--Pandharipande--Tseng \cite{JPT} and by the authors jointly with Do and Moskovsky \cite{BDKLM}. The two other class of vanishing relations are of a different nature and depend on the genus.
\end{abstract}

\maketitle
\tableofcontents

\section{Introduction}
\label{sec:Intro}

\vspace{0.5cm}

\subsection{Background: intersection theory and topological recursion}

The $\Omega$-classes $\Omega^{(r,s)}_{g;a_1,\ldots,a_n}$ are generalisations of the Hodge class on the moduli space of complex curves $\overline{\mathcal{M}}_{g,n}$, depending on integer parameters $(r,s) \in \mathbb{Z}_{> 0} \times \mathbb{Z}$ and $a_1,\ldots,a_n \in \mathbb{Z}$. They were introduced by Mumford for $s=1$ and $r=1$ \cite{Mum83}, by Bini \cite{Bin03} for generic $s$ and $r = 1$, by Chiodo \cite{Chi08} in general, and enjoy many interesting properties. In particular, they form a cohomological field theory, \textit{i.e.} they are compatible with the boundary structure of the moduli spaces \cite{lew-pop-sha-zvo17}. They appear in the enumerative geometry of branched coverings over $\mathbb{P}^1$, aka Hurwitz theory, as well as in Masur--Veech volumes strata of the moduli of quadratic differentials \cite{CMS+19}. More precisely, the pioneering work of Ekedahl--Lando--Shapiro--Vainshtein \cite{ELSV} gave a formula for simple Hurwitz numbers via intersection indices of the Hodge class, and this was generalised to a subclass of double Hurwitz numbers by Johnson--Pandharipande--Tseng \cite{JPT} involving virtual localisation on the moduli space of stable maps to an orbifold $\mathbb{P}^1$. Later, ELSV-type formulae have been found for many other types of weighted Hurwitz numbers. In particular, the Hurwitz number of type ``$s$-orbifold with $(d + 1)$-completed cycles'' can be expressed via intersection indices of $\Omega^{(ds,s)}$ \cite{kra-lew-pop-sha19,bor-kra-lew-pop-sha20,dun-kra-pop-sha19-2}.

\medskip

Topological recursion \cite{eyn-ora-07} is a procedure that takes a spectral curve $\mathscr{S}$ (\textit{i.e.} a fixed branched cover of complex curves) as input and gives a collection $\omega_{g,n}$ of multidifferentials on $\mathscr{S}^n$ indexed by $(g,n) \in \mathbb{Z}_{\geq 0} \times \mathbb{Z}_{> 0}$, called correlators, as output. For appropriate choices of spectral curves the resulting correlators store enumerative information, and since its formalisation by Chekhov, Eynard and Orantin, the structure of topological recursion has appeared ubiquitous in enumerative geometry \cite{EynardICM}. In the present context there are two results of central importance. First, after the efforts of many, leading to the treatment of several special cases \cite{eyn-mul-saf11,do-lei-nor16,bou-her-liu-mul14,do-man14,do-dye-mat17,do-kar16,Lew18,dun-kra-pop-sha19,do-kar18,BDKLM,bouchard2023topological}, weighted Hurwitz numbers have been shown to be quite generally governed by topological recursion \cite{ACEH3,bych-dun-kaz-sha}. Second, the coefficients of the $\omega_{g,n}$ can always be expressed as intersection theory of a tautological class on $\overline{\mathcal{M}}_{g,n}$, which can be constructed from the spectral curve and which we call the TR class \cite{eyn14}, see \textit{e.g.} \cite[Section 7]{BKS} for a review. The ELSV and JPT formulae originally obtained via algebraic geometry have a second proof by combining these two results \cite{dun-lew-pop-sha15}, but for other ELSV-like formulae the path through topological recursion remains the only available proof to this day, \textit{e.g.}
\begin{itemize} 
\item  Topological recursion for weakly monotone Hurwitz numbers \cite{do-dye-mat17,do-kar16} implies a formula for these numbers in terms of intersection indices of $\kappa$-classes \cite{do-kar16,ale-lew-sha16};
\item  Zvonkine's conjecture \cite{Zvo06,SSZ15} proved in \cite{bor-kra-lew-pop-sha20,dun-kra-pop-sha19-2}:  ``$s$-orbifold, $(d + 1)$-completed cycles'' in terms of intersection indices of the class $\Omega^{(ds,s)}$.
\item The generalisation of the JPT formula proved in \cite{BDKLM}: all double Hurwitz numbers in terms of the classes $\Omega^{(r,r)}$.
\end{itemize}

An interesting feature of topological recursion is the possibility it offers to study deformations, which are controlled by global properties of families of spectral curves. This was applied in \cite{BDKLM} to derive an ELSV-like formula for double Hurwitz numbers by realising them as deformations of $s$-orbifold Hurwitz numbers and following this deformation through the topological recursion procedure. Double Hurwitz numbers are not just numbers, but rather polynomials in a finite set of variables, such that each monomial keeps track of covers with a different ramification profile above $\infty$. The deformation in the context of topological recursion produces double Hurwitz numbers as Laurent polynomials with coefficients being intersection indices of the class $\Omega^{(s,s)}$. This means that all the coefficients in front of negative powers should vanish. As a result, this has produced vanishing relations for intersection indices, which generalise the ones found by Johnson--Pandharipande--Tseng \cite{JPT}.

\vspace{0.2cm}

\subsection{Main results}

In this article we explore further the potential of deformation techniques in the framework of topological recursion, in order to derive new relations between integrals of $\Omega$-classes agains $\psi$-classes, in the following referred to as $\Omega$-integrals. This initiates a more structured analysis of the vanishing of $\Omega$-integrals, although much remains to be understood.

\medskip

After a background section, Section~\ref{sec:Omega:classes}, introducing the $\Omega$-class and relevant facts from Hurwitz theory and topological recursion, we exhibit in Sections~\ref{sec:Deformation:one}-\ref{sec:Deformation:two}-\ref{sec:Deformation:three} three families of spectral curves with the following properties:
\begin{itemize}
\item[(i)] the TR class associated to the central fiber is (up to a rescaling) an $\Omega$-class;
\item[(ii)] the general deformation properties of topological recursion --- summarised in Section~\ref{sec:principle} --- compute the correlators for a generic fiber as a Laurent series (in the parameters of the family) of $\Omega$-integrals;
\item[(iii)] we know from other means that this series can only contain nonnegative powers of the parameters.
\end{itemize}

We obtain relations between $\Omega$-integrals by equating to $0$ the coefficients in (ii) of monomials involving negative powers due to the crucial and non-trivial property (iii). The two first families of spectral curves we consider govern weighted Hurwitz numbers and (iii) is then a consequence of the structural results of \cite{bych-dun-kaz-sha}. The third family falls out of the scope of \cite{bych-dun-kaz-sha} but we justify in Proposition~\ref{polynom3} that it satisfies (iii) by coming back to a careful analysis of the topological recursion formula. We summarise our findings below, where we introduced the rising factorial
\begin{equation}
\label{risingfac}
[x]_{\ell} = x(x +1)\cdots (x + \ell - 1)
\end{equation}
and all the other relevant notations to read these formulae are explained in the next Section~\ref{Sec:not}.

\medskip

Our first set of relations holds in the divisible case $r = ds$ and is a direct generalisation of the one found for $d = 1$ \cite{BDKLM} and in particular generalises the set of vanishings found in \cite{JPT}, see the discussion in Section~\ref{sec:Omega:vanishing}.

\begin{theoremA}[Section~\ref{sec:Deformation:one}]
\label{th:A}
Let $d \geq 1$, $s \geq 2$ and set $r = ds$. Let $g \geq 0$. For any non-empty partition $\mu$ and non-empty $(s - 1)$-partition $\nu$ such that $|\mu| + |\nu| \leq s(\ell(\mu) - 1)$, we have
\begin{equation*}
\sum_{k = 1}^{\ell(\nu)} \frac{(-1)^{\ell(\nu) - k} (r/s)^{k}}{k!} 
\sum_{\substack{\boldsymbol{\rho} \in \mathcal{P}_{s - 1}^{k} \\ \sqcup \boldsymbol{\rho} = \nu}} \prod_{c = 1}^{k} 
\frac{\big[\frac{s - |\rho^{(c)}|}{s}\big]_{\ell(\rho^{(c)}) - 1}}{|\text{Aut}(\rho^{(c)})|} \int_{\overline{\mathcal{M}}_{g,\ell(\mu) + k}} \frac{\Omega^{(r,s)}_{g;-\overline{\mu},s - |\boldsymbol{\rho}|}}{\prod_{i = 1}^{\ell(\mu)} \left(1 - \frac{\mu_i}{r}\psi_i\right)} = 0.
\end{equation*}
If furthermore $\nu$ is bounded, \textit{i.e.} $\min_{i \neq j} (\nu_i + \nu_j) \geq s$, this sum has a single term and we get the vanishing
\begin{equation*}
\int_{\overline{\mathcal{M}}_{g,\ell(\mu) + \ell(\nu)}} \frac{\Omega^{(r,s)}_{g;-\overline{\mu},s - \nu}}{\prod_{i = 1}^n \left(1 - \frac{\mu_i}{r}\psi_i\right)} = 0.
\end{equation*}
\end{theoremA}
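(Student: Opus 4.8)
The plan is to realise these $\Omega$-integrals as the coefficients of a deformation of a family of spectral curves governing weighted Hurwitz numbers, following the scheme (i)--(iii) of the introduction. First I would fix the family constructed in Section~\ref{sec:Deformation:one}, whose central fiber is the spectral curve for the ``$s$-orbifold, $(d+1)$-completed cycles'' Hurwitz problem, so that $r = ds$ and, by the Zvonkine-type ELSV formula, the TR class of the central fiber is (up to rescaling) the class $\Omega^{(r,s)}$. By Eynard's theorem identifying topological-recursion correlators with intersection numbers of the TR class \cite{eyn14}, the correlators $\omega_{g,n}$ of the central fiber are then generating series of the $\Omega$-integrals $\int_{\overline{\mathcal{M}}_{g,n}} \Omega^{(r,s)}_{g;\boldsymbol a}\big/\prod_i(1 - \tfrac{\mu_i}{r}\psi_i)$, the local coordinate at each ramification point producing the $\psi$-denominator indexed by $\mu_i$ and the discrete parameter $a_i$ being recorded by the residue data.

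Next I would run the deformation principle of Section~\ref{sec:principle}. Perturbing the central curve in the directions dual to the second ramification profile $\nu$ (whose parts are bounded by $s-1$) and propagating the perturbation through the recursion expresses the generic-fiber correlators as a Laurent series in the deformation parameters, whose coefficients are again $\Omega$-integrals, but now carrying $k$ extra marked points with shifted arguments $s - |\rho^{(c)}|$. The precise combinatorial shape of the coefficient --- the sum over $k$ and over tuples $\boldsymbol\rho$ of $(s-1)$-partitions with $\sqcup\boldsymbol\rho = \nu$, weighted by $(-1)^{\ell(\nu)-k}(r/s)^k/k!$, the rising factorials $[\tfrac{s-|\rho^{(c)}|}{s}]_{\ell(\rho^{(c)})-1}$ and the factors $1/|\mathrm{Aut}(\rho^{(c)})|$ --- should emerge from expanding the explicit local change of variables of the deformed curve: the sign-and-factorial pattern is the standard exponential-formula bookkeeping reorganising a product over the deformation insertions into a sum over set partitions.

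The third step is where property (iii) is essential. By the structural results of \cite{bych-dun-kaz-sha} the generic-fiber correlators compute honest weighted Hurwitz numbers, which are \emph{polynomials}, with only nonnegative powers, in the variables dual to $\nu$. The Laurent series of the previous step therefore has no negative part, so the coefficient of every monomial of negative total degree must vanish. The hypothesis $|\mu| + |\nu| \le s(\ell(\mu) - 1)$ is exactly the inequality placing the monomial labelled by $(\mu,\nu)$ in that negative-degree range, and equating its coefficient to zero yields the first displayed identity.

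The single-term vanishing follows by a purely combinatorial collapse of this sum. If $\nu$ is bounded then $\nu_i + \nu_j \ge s > s-1$ for all $i \ne j$, so no block $\rho^{(c)}$ of an $(s-1)$-partition can contain two parts of $\nu$; every block is a singleton, which forces $k = \ell(\nu)$, makes each rising factorial $[\,\cdot\,]_{\ell(\rho^{(c)})-1} = [\,\cdot\,]_0 = 1$ and each $|\mathrm{Aut}(\rho^{(c)})| = 1$, and lets the number of admissible orderings cancel the $1/k!$. Only one $\Omega$-integral survives, with a nonzero prefactor, so its vanishing is the second display. The step I expect to be hardest is the second one: making the deformation propagation explicit enough to read off the exact coefficient, since this demands a careful local expansion of the deformed spectral curve together with control of how topological recursion entangles the deformation insertions with the $\psi$-class data. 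Once that expansion is in hand, invoking (iii) via \cite{bych-dun-kaz-sha} is comparatively routine.
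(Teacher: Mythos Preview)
Your proposal is correct and follows essentially the same approach as the paper's Section~\ref{sec:Deformation:one}. The step you flag as hardest is carried out there via Lemma~\ref{QTev} (a direct adaptation of \cite[Lemma~5.2]{BDKLM} to general $d\ge 1$), which computes $D_t^{l}f_t|_{t=0}$ and the residue pairings $T_m^{a,(l)}$ explicitly and is where the rising factorials and automorphism factors appear; the negative-degree condition then emerges concretely as the requirement that the exponent of the top coefficient $q_s$ in the extended-partition expansion \eqref{DHrs} be negative.
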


The second relation we find again concerns the divisible case $r = ds$ and has a range of vanishing depending on the genus. It is presented in the text in Theorem~\ref{genev}, but as it is rather complicated, we only state here the $s = 1$ case.

\begin{theoremB}[Section~\ref{sec:Deformation:two}] \label{th:B} Let $r \geq 2$, and $g \geq 0$. For any non-empty partition $\mu$ and possibly empty $(r - 1)$-partition $\tau$ such that
$$
2g - 2 + \ell(\mu) + |\mu| + |\tau|  \leq d(\ell(\tau) - 1),
$$
we have
\begin{equation*}
\sum_{h = 1}^{\ell(\tau)} \frac{(-1)^{\ell(\tau) - h}}{h!} \sum_{\substack{\boldsymbol{\rho} \in \mathcal{P}_{r - 1}^{h} \\ 2 \leq |\rho^{(c)}| \leq r - 1 \\ \sqcup \boldsymbol{\rho} = \tau}} \prod_{c = 1}^{h} \frac{\big[\frac{r + 1 - |\rho^{(c)}|}{r}\big]_{\ell(\rho^{(c)}) - 1}}{|\text{Aut}(\rho^{(c)})|} \int_{\overline{\mathcal{M}}_{g,\ell(\mu) + h}} \frac{\Omega^{(r,1)}_{g;-\overline{\mu},r + 1 - |\boldsymbol{\rho}|}}{\prod_{i = 1}^{\ell(\mu)} \left(1 - \frac{\mu_i}{r}\psi_i\right)} = 0.
\end{equation*}
If furthermore $\tau$ has no parts $1$ and is bounded, \textit{i.e.} $\min_{i \neq j} (\tau_i + \tau_j) \geq r$, this sum has a single term and we get the vanishing
$$
\int_{\overline{\mathcal{M}}_{g,\ell(\mu) + \ell(\tau)}} \frac{\Omega^{(r,1)}_{g;-\overline{\mu},r - \tau}}{\prod_{i = 1}^{\ell(\mu)} \left(1 - \frac{\mu_i}{r} \psi_i\right)}.
$$  
\end{theoremB}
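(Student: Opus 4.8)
The plan is to run the same three-step mechanism (i)--(iii) that yielded Theorem~\ref{th:A}, now applied to the family of spectral curves of Section~\ref{sec:Deformation:two} in place of that of Section~\ref{sec:Deformation:one}. First I would record the geometry of this family: its central fiber is a spectral curve whose associated TR class equals, up to an explicit rescaling, the class $\Omega^{(r,1)}$, while a generic fiber governs a weighted Hurwitz problem in which the extra ramification data encoded by the parts of the $(r-1)$-partition $\tau$ plays the role of deformation parameters. Property (i) thus fixes the ``$t=0$'' vertex of the deformation to be an $\Omega^{(r,1)}$-integral against the $\psi$-class denominators $\prod_i (1-\tfrac{\mu_i}{r}\psi_i)$ coming from the $\mu$-insertions, so that the whole computation is set up exactly as in the $s\geq 2$ case, only with a different base curve.

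Second, I would feed this family into the deformation principle summarised in Section~\ref{sec:principle}. This expresses each generic-fiber correlator $\omega_{g,n}$ as a finite sum over the ways the deformation attaches to the central-fiber correlators, and expanding in the $\tau$-parameters produces a Laurent series whose coefficients are $\Omega^{(r,1)}$-integrals. The combinatorics of this expansion is what builds the left-hand side of the claimed identity: grouping the parts of $\tau$ into $h$ clusters $\rho^{(1)},\dots,\rho^{(h)}$ with $\sqcup\boldsymbol\rho=\tau$ records which deformation insertions merge at a common marked point; each cluster shifts the corresponding $\Omega$-index to $r+1-|\rho^{(c)}|$ and contributes the local factor $[\tfrac{r+1-|\rho^{(c)}|}{r}]_{\ell(\rho^{(c)})-1}/|\mathrm{Aut}(\rho^{(c)})|$; and the alternating weights $(-1)^{\ell(\tau)-h}/h!$ encode a Möbius inversion over the set-partition lattice (equivalently, the connected-versus-disconnected passage) inherent in the deformation expansion. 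The range $2\le |\rho^{(c)}|\le r-1$ records precisely which cluster weights give a genuine, unwrapped index shift modulo $r$, weight $1$ reproducing the trivial residue $r\equiv 0$.

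Third, property (iii): the generic fiber governs honest weighted Hurwitz numbers, so by the structural polynomiality results of \cite{bych-dun-kaz-sha} the correlators contain only nonnegative powers of the deformation parameters. I would then carry out the degree count. The key quantitative input is that the $\Omega^{(r,1)}$-integral over $\overline{\mathcal{M}}_{g,\ell(\mu)+h}$ is supported in a bounded cohomological degree, so the monomial in the $\tau$-parameters selected by a given pair $(\mu,\tau)$ carries a total power controlled by the Euler characteristic $2g-2+\ell(\mu)+h$ together with $|\mu|$ and $|\tau|$; under the hypothesis $2g-2+\ell(\mu)+|\mu|+|\tau|\le d(\ell(\tau)-1)$ this power is strictly negative, and polynomiality forces its coefficient to vanish, which is the displayed relation. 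It is precisely this dimensional count --- rather than the genus-independent $x$-degree bound used for Theorem~\ref{th:A} --- that makes the admissible range depend on $g$.

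Finally, the single-term specialisation is a purely combinatorial simplification: if $\tau$ has no part equal to $1$ and is bounded, $\min_{i\ne j}(\tau_i+\tau_j)\ge r$, then any cluster containing two parts of $\tau$ would have $|\rho^{(c)}|\ge r$, violating $|\rho^{(c)}|\le r-1$, so every cluster is a single part; this forces $h=\ell(\tau)$, each $\ell(\rho^{(c)})=1$ so that the rising factorial is the empty product $1$, and $|\mathrm{Aut}(\rho^{(c)})|=1$, collapsing the sum to the single $\Omega$-integral, which must vanish. I expect the main obstacle to be the second step: matching the graph expansion of the deformed topological recursion to the exact alternating, rising-factorial combination above --- in particular checking that the $\psi$-denominators and the index shifts $r+1-|\rho^{(c)}|$ emerge verbatim --- whereas the polynomiality input (iii) is supplied essentially for free by \cite{bych-dun-kaz-sha}.
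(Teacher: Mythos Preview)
Your high-level strategy matches the paper's proof exactly: specialise the second deformation of Section~\ref{sec:Deformation:two} to $s=1$, use that the central fiber gives the $\Omega^{(r,1)}$-curve $\mathscr{S}^{(r,1)}$, feed this into the oblique Taylor formula \eqref{Taylorser}, identify the resulting expansion of $H_{g,n}^{\text{2nd}}(\mu)$ as a Laurent polynomial in $p_d$, and invoke polynomiality from \cite{bych-dun-kaz-sha} to kill the coefficient of each negative power of $p_d$. The degree count $\deg(p_d)<0 \Leftrightarrow 2g-2+\ell(\mu)+|\mu|+|\tau|\le d(\ell(\tau)-1)$ and the bounded-$\tau$ simplification are as you describe.

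Where your narrative diverges from the paper is the \emph{origin} of the combinatorial weights, and this is not cosmetic. The alternating sign $(-1)^{\ell(\tau)-h}$ does not come from a M\"obius inversion on the set-partition lattice or from any connected/disconnected passage; it comes from the factor $(-1)^{l}/p_d^{l}$ in the explicit residue computation of $D_t^{l}f_t(z)|_{t=0}$ (Lemma~\ref{Fll}, formula \eqref{Fjsimple}), summed over the $h$ insertions so that $\sum_c(\ell(\gamma^{(c)})-1)=\ell(\tau)-h$. Likewise, the rising factorial $\big[\tfrac{r+1-|\rho^{(c)}|}{r}\big]_{\ell(\rho^{(c)})-1}$ is not an abstract ``merging'' weight: it arises from expanding $\zeta^{-k}$ in powers of $\zeta_t$ in the proof of Lemma~\ref{Fll} (see \eqref{Ckmt}), i.e.\ from inverting the relation $x_0(z)=x_t(z_t)$ to the needed order. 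The constraint $2\le|\rho^{(c)}|\le r-1$ comes from Definition~\ref{thetrho} specialised to empty $\theta^{(c)}$, not from modular wrapping. In short, the ``hard second step'' you flag is not a graph expansion to be matched, but a concrete residue calculation (Lemma~\ref{Fll} and Corollary~\ref{Tlemma22}); once that is done the rest is bookkeeping of powers of $p_d$ as in \eqref{dpdqfinal}. Your proposal would go through once you replace the heuristic interpretation with that computation.
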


The third and last relation we find holds for positive $r$ and $s$ without divisibility condition, with a range of vanishing depending on the genus.

\begin{theoremC}[Section~\ref{sec:Deformation:three}]
\label{th:C}
Let $r \geq 2$ and $s \in \{1,\ldots,r - 1\}$. Let $g \geq 0$. For any non-empty partition $\mu$ and a non-empty $(r + s - 1)$-partition $\tau$ whose parts belong to $\{s,\ldots,r + s - 1\}$ and such that
$$
(2g -2  +\ell(\mu))s + |\mu| + |\tau| \leq (r + s)\ell(\tau) - r,
$$
we have
$$
\sum_{k = 1}^{\ell(\tau)} \frac{1}{k!} \sum_{\substack{\boldsymbol{\rho} \in \mathcal{P}_{s,r+s-1}^{k} \\ |\rho^{(c)}| \leq r + s - 1 \\ \sqcup \boldsymbol{\rho} = \nu}} \prod_{c = 1}^{k} \frac{\big[\frac{r + s - |\rho^{(c)}|}{r}\big]_{\ell(\rho^{(c)}) - 1}}{|\text{Aut}(\rho^{(c)})|} \int_{\overline{\mathcal{M}}_{g,\ell(\mu) + k}} \frac{\Omega_{g;-\overline{\mu},r + s - |\boldsymbol{\rho}|}}{\prod_{i = 1}^{\ell(\mu)} \left(1 - \frac{\mu_i}{r}\psi_i\right)} = 0.
$$
If furthermore $\tau$ is bounded, \textit{i.e.} $\min_{i \neq j} (\tau_i + \tau_j) \geq r + s$, this sum has a single term and we get the vanishing
$$
\int_{\overline{\mathcal{M}}_{g,\ell(\mu) + \ell(\tau)}} \frac{\Omega^{(r,s)}_{g;-\overline{\mu},r + s - \tau}}{\prod_{i = 1}^{\ell(\mu)} \left(1 - \frac{\mu_i}{r} \psi_i\right)} = 0.
$$  
\end{theoremC}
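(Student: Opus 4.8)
The plan is to instantiate the three-step mechanism (i)--(iii) from the introduction for the third family of spectral curves $\mathscr{S}_{\boldsymbol{t}}$ built in Section~\ref{sec:Deformation:three}. First I would fix the central fiber to be the spectral curve whose TR class equals, up to rescaling, the Chiodo class $\Omega^{(r,s)}$, so that its correlators produce exactly the $\Omega$-integrals $\int_{\MM_{g,n}} \Omega^{(r,s)}_{g;-\overline{\mu}}/\prod_i(1 - \tfrac{\mu_i}{r}\psi_i)$ appearing in the statement; this is property (i). The deformation parameters $\boldsymbol{t}$ should then be attached to a family of monomials in the curve indexed by the admissible shifts $\{s,\ldots,r+s-1\}$, the partition $\tau$ recording which of these are turned on. Since $s < r$ and no divisibility is imposed, this family genuinely falls outside the weighted-Hurwitz setup governing Theorems~\ref{th:A} and~\ref{th:B}.

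Second, I would push the deformation through topological recursion using the principle recalled in Section~\ref{sec:principle}, expressing the correlators $\omega_{g,n}$ of a generic fiber as a Laurent series in $\boldsymbol{t}$ whose coefficients are $\Omega$-integrals; this is property (ii). I expect the combinatorial shape of these coefficients to emerge directly from expanding the deformed curve near its ramification points: each of the $k$ extra marked points should correspond to a block $\rho^{(c)}$ of deformation monomials merged at that point, which shifts the $\Omega$-class index there to $r+s - |\rho^{(c)}|$ (whence the constraint $|\rho^{(c)}| \leq r+s-1$ guaranteeing a valid index, and the range $\{s,\ldots,r+s-1\}$ for the parts of $\tau$). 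The rising factorial $[\tfrac{r+s-|\rho^{(c)}|}{r}]_{\ell(\rho^{(c)})-1}$ should be the local contribution of merging $\ell(\rho^{(c)})$ monomials into a single point, while the $1/k!$ and the $|\mathrm{Aut}(\rho^{(c)})|$ account for the symmetry of relabelling blocks and their interiors, and the set-partition sum $\boldsymbol{\rho}\in\mathcal{P}_{s,r+s-1}^{k}$ with $\sqcup\boldsymbol{\rho}=\tau$ enumerates the ways of clustering the active monomials. Tracking degrees, the monomial of total $\boldsymbol{t}$-degree governed by $\tau$ sits at the boundary fixed by the inequality $(2g-2+\ell(\mu))s + |\mu| + |\tau| \leq (r+s)\ell(\tau) - r$.

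Third, and this is the decisive input, I would invoke Proposition~\ref{polynom3} to assert property (iii): the generic-fiber correlators are in fact \emph{polynomial} in $\boldsymbol{t}$, i.e.\ contain no negative powers. The vanishing relation then follows immediately by extracting the coefficient of each monomial of negative total degree and equating it to zero, which reproduces the displayed identity. In the bounded regime $\min_{i\neq j}(\tau_i + \tau_j) \geq r+s$, no two parts of $\tau$ can be merged into a single block of weight $\leq r+s-1$, so the only surviving set partition is the one into singletons ($k=\ell(\tau)$); the sum collapses to a single term and yields the stated vanishing of one $\Omega$-integral.

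The main obstacle I anticipate is establishing Proposition~\ref{polynom3}, the polynomiality (iii). For the first two families this is free, being a consequence of the structural results of \cite{bych-dun-kaz-sha}; here those results do not apply, so polynomiality must be re-derived by a careful direct analysis of the topological recursion formula, controlling the pole orders of the $\omega_{g,n}$ in the deformation variables and ruling out negative powers intrinsically. A secondary difficulty is the purely combinatorial step of matching the Laurent coefficients with the set-partition sum weighted by rising factorials in precisely the stated closed form, keeping track of the block-weight constraint $|\rho^{(c)}|\leq r+s-1$ and the restricted range of the parts of $\tau$.
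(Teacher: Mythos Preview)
Your proposal is correct and follows essentially the same approach as the paper: the deformation family \eqref{St3rd}, the Taylor expansion via the oblique-deformation machinery of Section~\ref{sec:principle} leading to formula \eqref{Hgn3rddd}, and the key polynomiality input Proposition~\ref{polynom3} (proved by direct analysis of the recursion kernel, involutions and ramification points as $p_r \to 0$) to extract the vanishing from negative-power coefficients. One small clarification: the paper works with a \emph{single} deformation parameter $t$ multiplying $\tilde{P}(z) = \sum_{j=1}^{r-s} p_j z^j$ (so the monomial exponents lie in $\{1,\ldots,r-s\}$, not $\{s,\ldots,r+s-1\}$ --- the latter is the range for parts of $\tau = r+s-\pi$ after the change of variables), and the vanishing is obtained from negative powers of the coefficient $p_r$ in the expansion \eqref{Hgn3rddd}, not from negative powers of $t$.
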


In Section~\ref{sec:Omega:vanishing}, we give two other previously known vanishing results for $\Omega$-integrals, as well as observations from our numerical experiments carried out with the package \textsc{Admcycles} \cite{admcycles}. In Appendix~\ref{sec:Omega:properties} we collect basic properties of the $\Omega$-classes.

\vspace{0.2cm}

\subsection{Comments}

This note can be thought as a guide through the application of deformation techniques in topological recursion to obtain consequences in enumerative geometry, especially in the richer case of oblique deformations (see the terminology in Section~\ref{sec:principle}). Two recent works where deformations in topological recursion are studied should be mentioned.

\medskip

 In \cite{BCCGF22} topological recursion is established for a large class of maps and constellation enumerations, by first showing it for a restricted model, and then using deformations and checking that topological recursion gets carried along. A notable difference with our work is that the deformations they use are horizontal (see Definition~\ref{defclass}), thus the combinatorics of the Taylor expansion are simpler than the general ones discussed in Section~\ref{sec:principle}.

\medskip

 In \cite{TRLimits} the question of analyticity of topological recursion correlators along families of spectral curves (allowing certain catastrophes) is addressed, and sufficient conditions are proposed for analyticity to hold. Yet, in the families considered in the present work, ramification points can approach the essential singularity of the logarithm, and this case is not covered by \cite{TRLimits}.  We therefore have to rely on different arguments to justify the regularity needed to get our vanishing results. The need to consider logarithms come from the particular setting of the $\Omega$-classes. However, following the same strategy, it should be possible to derive vanishing results for intersection indices of  other tautological classes as consequence of the analyticity results proved in \cite{TRLimits}

\vspace{0.2cm}

\subsection{Notation}
\label{Sec:not}

If $m \in \mathbb{Z}$, we denote $-\overline{m}$ the unique integer in $\{0,\ldots,d - 1\}$ such that $-m = - \overline{m} \,\,\text{mod}\,\,r$. By extension, if $\mu$ is a partition, $-\overline{\mu}$ is the tuple $(-\overline{\mu}_1,\ldots,-\overline{\mu}_{\ell(\mu)})$. 

\medskip

A partition $\lambda$ is a (possibly empty) finite sequence of positive integers $\lambda_1 \geq \cdots \geq \lambda_{\ell}$, called parts. Its length is $\ell(\lambda) = \ell$, its size is $|\lambda| = \sum_{i = 1}^{\ell} \lambda_i$. The automorphism group $\text{Aut}(\lambda)$ is the set of permutations of $\{1,\ldots,\ell(\lambda)\}$ respecting the weak decreasing order in $\lambda$. The notation $\lambda = (1^{m_1} 2^{m_2} \cdots)$ means that $\lambda$ is the partition with $m_1$ parts $1$, $m_2$ parts $2$, etc. We have $|\text{Aut}(\lambda)| = \prod_{i \geq 1} m_i!$.  
The notation $\lambda \vdash N$ means that $\lambda$ is a partition of size $N$.   

\medskip

We say that $\lambda$ is an $N$-partition if it is empty or $\lambda_1 \leq N$. We denote $\mathcal{P}_{N}$ the set of $N$-partitions, and $\mathcal{P}\!\!\mathcal{P}_{N} \subseteq \mathcal{P}_{N}$ the set of $N$-partitions of size $\leq N$. If $\lambda$ is an $(N - 1)$-partition, we write $N -  \lambda$ the partition $N - \lambda_{\ell(\lambda)},\ldots,N - \lambda_1$. We say that an $(N - 1)$-partition is \emph{bounded} if $\min_{i \neq j} (\lambda_i + \lambda_j) \geq N$. A $(N',N)$-partition is a partition whose parts all belong to $\{N',N' + 1,\ldots,N\}$. We denote $\mathcal{P}_{N',N}$ the set of $(N',N)$-partitions. A more complicated set of ordered pairs of partitions is introduced in Definition~\ref{thetrho} for the needs of Theorem~\ref{genev}.

\medskip

An extended $N$-partition is $\overline{\lambda} = (1^{m_1} \cdots N^{m_N})$ with $m_1,\ldots,m_{N - 1} \in \mathbb{Z}_{\geq 0}$ but $m_N \in \mathbb{Z}$. Its size is defined as usual $|\overline{\lambda}| = \sum_{i = 1}^{N} im_i$. The $(N - 1)$-partition associated to $\overline{\lambda}$ is  $\lambda = (1^{m_1} \cdots (N - 1)^{m_{N - 1}})$. We denote $\overline{\mathcal{P}}_{N}$ the set of extended $N$-partitions. 

\medskip

If $p_1,\ldots,p_N$ is a $N$-tuple of variables and $\overline{\lambda} = (1^{m_1} \cdots N^{m_N})$ is an extended $N$-partition, we denote
$$
\vec{p}_{\overline{\lambda}} = p_{1}^{m_1} \cdots p_N^{m_N}.
$$

\medskip

\vspace{0.2cm}

\subsection*{Acknowledgements}

This work is partly a result of the ERC-SyG project, Recursive and Exact New Quantum Theory (ReNewQuantum) which received funding from the European Research Council (ERC) under the European Union's Horizon 2020 research and innovation programme under grant agreement No 810573.  D.L. has been supported by the SNSF Ambizione grant $PZ00P2/202123$ hosted at Section de Math\'{e}matiques de l'Universit\'{e} de Gen\`{e}ve, by the Institut de Physique Th\'{e}orique (IPhT) at CEA  Saclay, by the Institut des Hautes \'Etudes Scientifiques (IHES), by the University of Trieste, by the INFN under the national project MMNLP and by the INdAM group GNSAGA. We thank Johannes Schmitt for the invaluable help with the Sage package \textsc{admcycles} and the technical support testing the vanishing of integrals over $\overline{\mathcal{M}}_{g,n}$ on the computational cluster of the MPIM Bonn. G.B. thanks the MPIM Bonn, where part of this work was carried out, for excellent working conditions.

\vspace{1cm}

\section{Background on $\Omega$-classes and topological recursion}
\label{sec:Omega:classes}

\vspace{0.5cm}

\subsection{Definition of the $\Omega$-classes}

In \cite{Mum83}, Mumford derived a formula for the Chern character of the Hodge bundle on the moduli space of curves $\overline{\mathcal{M}}_{g,n}$ in terms of tautological classes and Bernoulli numbers. A generalisation of Mumford's formula was proposed in \cite{Chi08}. The moduli space of stable curves $\overline{\mathcal{M}}_{g,n}$ is substituted by the proper moduli stack $\overline{\mathcal{M}}_{g;a}^{(r,s)}$ of $r$-th roots of the line bundle
\begin{equation}
	\omega_{\log}^{\otimes s}\biggl(-\sum_{i=1}^n a_i p_i \biggr),
\end{equation}
where $\omega_{\log} = \omega\big(\sum_{i = 1}^{n} p_i\big)$ is the log-canonical bundle, $r \in \mathbb{Z}_{> 0}$ and $s, a_1,\ldots,a_n \in \mathbb{Z}$, satisfying the modular constraint
\begin{equation}
\label{eq:acondition}
	a_1 + a_2 + \cdots + a_n = (2g-2+n)s \quad \text{mod}\,\,r.
\end{equation} 
This condition guarantees the existence of a line bundle whose $r$-th tensor power is isomorphic to $\omega_{\log}^{\otimes s}\big(-\sum_{i = 1}^{n} a_i p_i\big)$. The label $a_i$ is called the type of $p_i$. Let $\pi \colon \overline{\mathcal{C}}_{g;a}^{r,s} \to \overline{\mathcal{M}}_{g;a}^{r,s}$ be the universal curve, and $\mathcal{L} \to \overline{\mathcal C}_{g;a}^{r,s}$ the universal $r$-th root. In complete analogy with the case of $\overline{\mathcal{M}}_{g,n}$, one can define $\psi$- and $\kappa$-classes on the moduli spaces of $r$-th roots. There is moreover a natural forgetful morphism
\begin{equation}
	\epsilon \colon
	\overline{\mathcal{M}}^{(r,s)}_{g;a_1,\ldots,a_n}
	\longrightarrow
	\overline{\mathcal{M}}_{g,n}
\end{equation}

Let $B_m(x)$ denote the $m$-th Bernoulli polynomial, that is the polynomial defined by the generating series
\begin{equation}
	\frac{te^{tu}}{ e^t - 1} = \sum_{m \geq 0} B_{m}(u)\frac{t^m}{m!}.
\end{equation}
The evaluations $B_m(0) = (-1)^m B_m(1) = B_m$ recover the usual Bernoulli numbers. There is an explicit formula for the Chern characters of the derived pushforward of the universal line bundle on the moduli of $r$-th roots.

\begin{theorem} \cite{Chi08}
	On the space $\overline{\mathcal{M}}_{g;a_1,\ldots,a_n}^{(r,s)}$ for $a_1,\ldots,a_n \in \{0,\ldots,r - 1\}$, we have the formula
	\begin{equation} \label{eqn:Omega:formula}
			\ch_m\big(R^{\bullet} \pi_{\ast}{\mathcal L} \,\big|\, \overline{\mathcal{M}}_{g;a_1,\ldots,a_n}^{(r,s)}\big)
		=
		\frac{B_{m+1}(\tfrac{s}{r})}{(m+1)!} \kappa_m
		-
		\sum_{i=1}^n \frac{B_{m+1}(\tfrac{a_i}{r})}{(m+1)!} \psi_i^m
		+
		\frac{r}{2} \sum_{a=0}^{r-1} \frac{B_{m+1}(\tfrac{a}{r})}{(m+1)!} \, j_{a\ast} \frac{(\psi')^m - (-\psi'')^m}{\psi' + \psi''}. 
	\end{equation}
	Here $j_a$ is the boundary morphism that represents the boundary divisor such that the two branches of the corresponding node are of type $a$ and $r-a$, and $\psi',\psi''$ are the $\psi$-classes at the two branches of the node.
\end{theorem}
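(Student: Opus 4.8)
The plan is to apply the Grothendieck--Riemann--Roch theorem to the universal curve $\pi\colon \overline{\mathcal{C}}^{r,s}_{g;a} \to \overline{\mathcal{M}}^{(r,s)}_{g;a}$ and the universal $r$-th root $\mathcal{L}$, following Mumford's strategy for the Hodge bundle but adapted to the stack of $r$-th roots. GRR gives
$$
\ch\big(R^{\bullet}\pi_{\ast}\mathcal{L}\big) = \pi_{\ast}\big(\ch(\mathcal{L})\cdot \mathrm{Td}(\omega_{\pi}^{\vee})\big),
$$
where $\omega_\pi$ is the relative dualizing sheaf and the Todd class is to be understood together with the correction coming from the locus where $\omega_\pi^\vee$ fails to be locally free, namely the nodes of the fibres. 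Writing $K = c_1(\omega_\pi)$, on the smooth part of each fibre one has $\mathrm{Td}(\omega_\pi^\vee) = K/(e^K - 1)$, and the defining relation $\mathcal{L}^{\otimes r} \cong \omega_{\log}^{\otimes s}(-\sum_i a_i p_i)$ identifies $c_1(\mathcal{L})$ with $\tfrac{s}{r}K$ away from the sections and the nodes. The strategy is then to stratify the universal curve into its interior, its marked sections, and its node locus, and to match each of the three resulting contributions with the three terms on the right-hand side of \eqref{eqn:Omega:formula}.

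First I would isolate the interior contribution. Away from the sections $p_i$ and the singular locus the integrand becomes
$$
e^{(s/r)K}\,\frac{K}{e^K - 1} = \sum_{m \geq 0} B_m\!\big(\tfrac{s}{r}\big)\frac{K^m}{m!},
$$
by the generating series $\tfrac{te^{tu}}{e^t-1} = \sum_m B_m(u)t^m/m!$. Extracting the part of cohomological degree $m+1$ and pushing forward via $\pi_*(K^{m+1}) = \kappa_m$ produces the first term $\tfrac{B_{m+1}(s/r)}{(m+1)!}\kappa_m$. Next I would treat the sections: near $p_i$ the sheaf $\omega_{\log}$ differs from $\omega_\pi$ by the divisor $p_i$, and $\mathcal{L}$ acquires a fractional twist measured by $a_i/r$, reflecting the orbifold structure along the section. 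Localising the GRR integrand at $p_i$ and using $\psi_i = \sigma_i^{\ast} c_1(\omega_\pi)$, the shifted generating series evaluates the Bernoulli polynomial at $a_i/r$ and yields $-\sum_i \tfrac{B_{m+1}(a_i/r)}{(m+1)!}\psi_i^m$, the overall sign stemming from the $-a_i p_i$ appearing in the rooting datum.

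The hard part will be the contribution of the node locus, which produces the boundary term. Here $\omega_\pi^\vee$ is not locally free, so I would resolve $\mathcal{L}$ near the singular locus and work with the local model of an $r$-th root at a node, whose two branches carry opposite types $a$ and $r-a$. An excess-intersection computation on the codimension-two node locus produces the antisymmetric combination $\tfrac{(\psi')^m - (-\psi'')^m}{\psi'+\psi''}$ in the classes $\psi',\psi''$ at the two branches; the sum over $a \in \{0,\ldots,r-1\}$ records the possible types of the node, the factor $r$ comes from the order of the cyclic automorphism group stabilising the node in the stack of roots, and the factor $\tfrac12$ from the involution exchanging the two branches. Assembling the three contributions via pushforward $j_{a\ast}$ along the boundary gluing morphisms gives exactly \eqref{eqn:Omega:formula}. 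The principal subtlety throughout is the bookkeeping of the stacky structure and the monodromy/age data at the nodes, which is precisely what distinguishes this computation from Mumford's classical one; everything else is a careful repackaging of the Bernoulli generating series inside the GRR pushforward.
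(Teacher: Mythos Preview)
The paper does not contain its own proof of this statement: it is quoted from \cite{Chi08} as background. So there is nothing to compare your proposal against in the paper itself.

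That said, your sketch is the correct outline of Chiodo's argument: apply Grothendieck--Riemann--Roch to $\pi\colon \overline{\mathcal{C}}^{r,s}_{g;a} \to \overline{\mathcal{M}}^{(r,s)}_{g;a}$ and $\mathcal{L}$, stratify the universal curve into interior, marked sections, and nodal locus, and identify the three pushforwards with the three displayed terms via the Bernoulli generating series. The interior and section contributions are essentially as in Mumford's computation, only with the arguments of the Bernoulli polynomials shifted to $s/r$ and $a_i/r$ because of the rooting datum.

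Where your sketch is thinnest is exactly where the real content of \cite{Chi08} lies: the nodal contribution. You say ``an excess-intersection computation \ldots\ produces the antisymmetric combination,'' but on the stack of $r$-th roots the local picture at a node is not simply Mumford's $A_1$ singularity; the root line bundle has monodromy of order dividing $r$ around the node, and the universal curve acquires a $\mu_r$-gerbe structure there. Chiodo's key technical step is to desingularise this stacky node (his ``bubbling'' of the node into a chain of orbifold $\mathbb{P}^1$'s), compute the Todd class on the resolution, and then push down. The index $a$ in the boundary sum is the age of the $\mu_r$-representation on the fibre of $\mathcal{L}$ at the node, and the factor $r$ comes from the degree of $\epsilon$ on boundary strata rather than from a stabiliser count as you phrase it. None of this invalidates your outline, but if you were to write this up you would need to supply that local analysis rather than invoke it; as written, your nodal paragraph is a description of the answer rather than a derivation.
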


We can then consider the pushforward to the moduli space of stable curves of the family of Chern classes
\begin{equation}\label{eqn:Omega}
	\Omega^{(r,s)}_{g;a_1,\ldots,a_n}(u) 
	=
	\epsilon_{\ast}
	\exp{\Biggl(
		\sum_{m \geq 1} (-u)^m (m-1)! \,\ch_m\big(R^{\bullet} \pi_{\ast}{\mathcal L} \,\big|\, \overline{\mathcal{M}}_{g;a_1,\ldots,a_n}^{(r,s)}\big)	\Biggr)}
	\in
	H^{\text{even}}(\overline{\mathcal{M}}_{g,n}).
\end{equation}
We will omit the variable $u$ when $u = 1$. Notice that we recover Mumford's formula for the Hodge class when $r = s = 1$ and $a = (1,\ldots,1)$. For $r = 1$, general $s$ and $a = (s,\ldots,s)$, we get the generalised Hodge classes considered by Bini in \cite{Bin03}.  If the modular condition \eqref{eq:acondition} is not satisfied, we declare $\Omega^{(r,s)}_{g;a_1,\ldots,a_n}$ to be zero.

\medskip

By expanding the exponential \eqref{eqn:Omega} one can derive an expression of the $\Omega$-classes as a sum over decorated stable graphs \cite{JPPZ17}. From there, one can recognise using the Givental group action that the $\Omega$-classes for types $a_1,\ldots,a_n$ in the fundamental range $\{0,\ldots,r-1\}$  form a cohomological field theory.

\begin{theorem}\label{CohFTl} \cite{lew-pop-sha-zvo17} Let $r \geq 1$ and $V = \text{span}_{\mathbb{C}}(v_1, \ldots, v_r)$ a $r$-dimensional vector space. For any $s \in \ZZ$, the collection of maps $\Omega_{g,n}^{(r,s)} : V^{\otimes n} \longrightarrow H^{\textup{even}}(\overline{\mathcal{M}}_{g,n})$ defined by
$$ 
\Omega^{(r,s)}_{g,n}(v_{a_1}\otimes \cdots \otimes v_{a_n}) = \Omega^{(r,s)}_{g;a_1,\ldots,a_n}
$$
and indexed by $2g - 2 + n > 0$ form a cohomological field theory with the pairing on $V$ defined by 
	\begin{equation}
		\eta(v_a, v_b) = \frac{\delta_{r|a+b}}{r}		
	\end{equation}
If $s \in \{0,\ldots,r\}$, it admits the flat unit $v_s$ (with the convention $v_0 = v_r$). 
\end{theorem}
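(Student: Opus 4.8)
The plan is to realise $\Omega^{(r,s)}$ as the image of a degree-zero (topological) field theory under a Givental $R$-matrix action and then invoke that this action preserves the cohomological field theory axioms, so that the gluing axioms come for free and the content lies in producing the identification. Concretely, the axioms to establish are $S_n$-equivariance of each $\Omega^{(r,s)}_{g,n}$; the two gluing axioms, i.e. compatibility under pullback along the boundary morphisms $\overline{\mathcal{M}}_{g_1,n_1+1}\times\overline{\mathcal{M}}_{g_2,n_2+1}\to\overline{\mathcal{M}}_{g,n}$ and $\overline{\mathcal{M}}_{g-1,n+2}\to\overline{\mathcal{M}}_{g,n}$, with respect to the pairing $\eta(v_a,v_b)=\delta_{r\mid a+b}/r$ (its inverse having entries $\eta^{ab}=r\,\delta_{r\mid a+b}$, reflecting that a node splits into half-edges of complementary types $a$ and $r-a$); and, when $s\in\{0,\ldots,r\}$, the flat unit property for $v_s$.

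First I would expand the exponential \eqref{eqn:Omega} using Chiodo's formula \eqref{eqn:Omega:formula}. In the product one separates the local contributions --- the $\kappa_m$ terms (vertex decorations) and the $\psi_i^m$ terms (leg decorations) --- from the boundary terms $\tfrac{r}{2}\sum_{a}\tfrac{B_{m+1}(a/r)}{(m+1)!}\,j_{a\ast}\tfrac{(\psi')^m-(-\psi'')^m}{\psi'+\psi''}$. A product of such boundary pushforwards indexes a nested boundary stratum, i.e. a stable graph, so after applying $\epsilon_\ast$ the class $\Omega^{(r,s)}_{g;a_1,\ldots,a_n}$ is reorganised as a sum over stable graphs $\Gamma$ of genus $g$ with $n$ legs: each edge carries a type $a\in\{0,\ldots,r-1\}$ split as $(a,r-a)$ on its two half-edges, each vertex carries the $\kappa$-decoration, each half-edge and leg carries a $\psi$-decoration, and the whole is weighted by explicit Bernoulli-polynomial coefficients. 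This is precisely the Janda--Pandharipande--Pixton--Zvonkine-type expansion \cite{JPPZ17}.

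Next I would identify this graph sum with a Givental action. One takes the degree-zero topological field theory $\omega^{\mathrm{top}}$ on $V$ carrying $\eta$, and an $R$-matrix $R(z)\in\mathrm{End}(V)[[z]]$ whose entries are the generating series of the coefficients $B_{m+1}(a_i/r)/(m+1)!$ that appear as $\psi$-exponents in \eqref{eqn:Omega:formula}, with the $\kappa$-data entering as the associated translation. The crux is to match the edge factor of the graph sum, which comes from the boundary term, with the universal Givental node bivector; schematically
\[
E(z',z'') \;=\; \frac{\eta^{-1} - R^{-1}(z')\,\eta^{-1}\,\big(R^{-1}(z'')\big)^{t}}{z' + z''},
\]
where the antisymmetric combination $(\psi')^m-(-\psi'')^m$ in Chiodo's formula is exactly what produces the $\eta$-symplectic relation $R(z)\,\eta^{-1}\,R^{t}(-z)=\eta^{-1}$ that makes $E$ a power series, while the prefactor $r/2$ reconciles with $\eta^{ab}=r\,\delta_{r\mid a+b}$ and the branch-swap symmetry. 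Once this is verified, the graph sum equals $R.\omega^{\mathrm{top}}$.

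Finally, since $\omega^{\mathrm{top}}$ is a CohFT and the Givental group action sends CohFTs to CohFTs (Teleman's reconstruction), $R.\omega^{\mathrm{top}}=\Omega^{(r,s)}$ is a CohFT with pairing $\eta$, and $S_n$-equivariance is visible from the invariance of the weights under relabelling legs. For the flat unit I would reduce, in the Givental picture, to two conditions: that $v_s$ is the unit of $\omega^{\mathrm{top}}$, and that $R(z)$ together with its translation satisfies Givental's unit compatibility $R(z)\mathbf{1}=\mathbf{1}$ and $T(z)=z(\mathbf{1}-R^{-1}(z)\mathbf{1})$; both are governed by the value $s/r$ (the log-canonical twist enters through $B_{m+1}(s/r)\kappa_m$), which is why the unit is $v_s$ and exists exactly when $s\in\{0,\ldots,r\}$, with $v_0:=v_r$. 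The main obstacle is the identification in the third paragraph: reconciling all the Bernoulli-polynomial stable-graph weights with the universal $R$-action formula and checking that they assemble into a genuine $\eta$-symplectic $R$-matrix acting on the correct topological CohFT. Alternatively one could verify the two gluing axioms directly from the compatibility of the $r$-th root construction with the boundary maps, but the combinatorics of the node propagator are the same obstacle in disguise; everything else ($S_n$-equivariance, and then both gluing axioms) follows formally once this matching is in place.
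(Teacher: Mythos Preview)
Your proposal is correct and follows precisely the approach the paper indicates: the paper does not give a detailed proof but, in the sentence preceding the theorem, sketches exactly your strategy --- expand the exponential \eqref{eqn:Omega} into a sum over decorated stable graphs \cite{JPPZ17} and then recognise this as the Givental group action applied to a topological field theory, citing \cite{lew-pop-sha-zvo17} for the details. Your write-up fleshes out this outline faithfully, including the identification of the edge bivector with the symplectic $R$-matrix data and the flat-unit check via the translation compatibility.
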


In fact, we will only need $a_i$ in the fundamental range, but as $a_i = r$  leads to the same class as $a_i = 0$, for convenience we will sometimes allow $a_i = r$ as well or replace the fundamental range with $\{1,\ldots,r\}$. We refer to \cite{JPPZ17,lew-pop-sha-zvo17,GLN20} for a more detailed review of properties and applications of the $\Omega$-classes. Here, we will focus on reviewing its apparition in Hurwitz theory, which is directly relevant for us. Some other basic properties are given in Appendix~\ref{sec:Omega:properties}.

\vspace{0.2cm}

\subsection{Quick definition of topological recursion}
\label{TRreview}
The definition of topological recursion which we give now will only be relevant in Section~\ref{sec:polynom3}. In the rest of the article, topological recursion can be used as a black box which takes a spectral curve as input and gives a collection of multidifferentials $\omega_{g,n}$ indexed by $(g,n) \in \mathbb{Z}_{\geq 0} \times \mathbb{Z}_{> 0}$ as output. The reader ready to accept this may safely jump to the next subsection.

\medskip

We will restrict our definitions to the spectral curves of the particular type needed in this article, and refer \textit{e.g.} to \cite{eyn-ora-07,BEthink,TRLimits} for more general definitions. For us, a spectral curve $\mathscr{S}$ is simply specified by the data of two functions $x,y $ on $\mathbb{C}^*$ such that $\dd x$ and $\dd y$ extend as meromorphic $1$-form on the Riemann sphere containing $\mathbb{C}^*$. We denote $\mathcal{R}$ the set of zeros of $\dd x$. We assume that points in $\mathcal{R}$ are simple zeros of $\dd x$ and are neither zeros nor poles of $\dd y$. 

\medskip 

For each $\alpha \in \mathcal{R}$, there we have a holomorphic involution $z \mapsto \overline{z}$ defined in a neighborhood of $\alpha$ in $\mathbb{C}^*$, such that $x(z) = x(\overline{z})$ but $\overline{z} \neq z$ for $z \neq \alpha$.  The recursion kernel is defined as
$$
K_{\alpha}(z_0,z) = \frac{\frac{1}{2}\big(\frac{1}{z_0 - z} - \frac{1}{z_0 - \overline{z}}\big)\dd z_0}{(y(z) - y(\overline{z}))\dd x(z)}.
$$

\medskip

The topological recursion starts from this data. It then defines the $1$-form and bidifferential
$$
\omega_{0,1}(z) = y(z) \dd x(z), \qquad  \omega_{0,2}(z_1,z_2) = \frac{\dd z_1 \otimes \dd z_2}{(z_1 - z_2)^2},
$$
and by induction on $2g - 2 + (1 + n) > 0$ the multidifferentials
\begin{equation}
\label{Trformula}
\begin{split}
& \quad \omega_{g,1 + n}(z_0,\ldots,z_n) \\
& = \sum_{\alpha \in \mathcal{R}}  \Res_{z = \alpha} K_{\alpha}(z_0,z)\bigg(\omega_{g - 1,2+n}(z,\overline{z},z_1,\ldots,z_n) + \sum_{\substack{J \sqcup J' = \{z_1,\ldots,z_n\} \\ h + h' = g}}^{\text{no}\,\,\omega_{0,1}} \omega_{h,1+|J|}(z,J) \otimes \omega_{h',1+|J'|}(\overline{z},J')\bigg).
\end{split}
\end{equation}
We refer to $\omega_{g,n}$ as the correlators associated to the spectral curve $\mathscr{S}$. We also introduce the corresponding free energies
$$
F_{g,n}(z_1,\ldots,z_n) = \int_{0}^{z_1} \cdots \int_{0}^{z_n} \omega_{g,n}.
$$

\medskip

The technical assumption that $\dd x$ and $\dd y$ do not have common zeros is necessary for topological recursion to be well-defined, \textit{i.e.} to produce symmetric correlators under exchange of $z_1,\ldots,z_n$. The other assumptions can be waived but this will not be needed here.

\vspace{0.2cm}

\subsection{$\Omega$-integrals and topological recursion}
\label{Secrschi}

The only facts we genuinely need about topological recursion are summarised in the remaining of Section~\ref{sec:Omega:classes}. An important fact established in \cite{dun-ora-sha-spi14} is that correlation functions of semi-simple cohomological field theories are governed by topological recursion on a spectral curve specified by the Givental--Teleman reconstruction procedure \cite{Teleman}. This is at the origin of the following result.

\medskip

Let $r \in \mathbb{Z}_{> 0}$ and $s \in \mathbb{Z}^*$, consider the spectral curve $\mathscr{S}^{(r,s)}$ parametrised by $z \in \mathbb{C}^*$ with
\begin{equation}
\label{Srscurve}
\mathscr{S}^{(r,s)}\,: \qquad x(z) = \ln z - z^{r},\qquad y(z) = z^{s},
\end{equation}
and denote $\omega_{g,n}^{(r,s)}$ the  corresponding correlators obtained by the topological recursion. We shall use the symbol $\approx$ to denote an all-order series expansion of a meromorphic form around a certain point.

\begin{theorem} \cite{lew-pop-sha-zvo17}
\label{rsELSV} For $g \geq 0$ and $n \geq 1$ such that $2g - 2 + n > 0$, we have the expansion as $z_i \rightarrow 0$
\begin{equation}
\label{Hgnrs} \omega_{g,n}^{(r,s)}(z_1,\ldots,z_n) \approx \sum_{\mu_1,\ldots,\mu_n > 0} H_{g,n}^{(r,s)}(\mu_1,\ldots,\mu_n) \bigotimes_{i = 1}^n \dd\big(e^{\mu_i x(z_i)},\big),
\end{equation}
where
\[
H_{g,n}^{(r,s)}(\mu_1,\ldots,\mu_n) = \prod_{i = 1}^n \frac{(\mu_i/r)^{\lfloor \mu_i/r \rfloor}}{\lfloor \mu_i/r \rfloor !} \cdot \frac{r^{(2g - 2 + n)(1 + s/r) + |\mu|/r}}{s^{2g -2 + n}} \int_{\overline{\mathcal{M}}_{g,n}} \frac{\Omega^{(r,s)}_{g;-\overline{\mu}}}{\prod_{i = 1}^n \big(1 - \frac{\mu_i}{r}\psi_i\big)}.
\]
\end{theorem}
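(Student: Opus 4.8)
The plan is to obtain the formula as a special case of the general correspondence between semisimple cohomological field theories (CohFTs) and topological recursion, rather than re-deriving it from scratch. By Theorem~\ref{CohFTl}, the classes $\Omega^{(r,s)}$ assemble into a CohFT on the $r$-dimensional space $V$ with pairing $\eta(v_a,v_b) = \delta_{r|a+b}/r$. Its degree-zero part is a semisimple Frobenius algebra (the group algebra of $\ZZ/r\ZZ$, which splits as $\CC^{r}$), so by the Givental--Teleman classification the full CohFT is the image of its topological part under a uniquely determined $R$-matrix together with a translation; the absence of a flat unit for general $s \in \ZZ^{*}$ is absorbed into the translation. The first step is then to invoke the theorem of Dunin-Barkowski--Orantin--Shadrin--Spitz \cite{dun-ora-sha-spi14}: the correlators of such a CohFT are reproduced by topological recursion on a spectral curve whose local behaviour near its ramification points encodes exactly this Givental data.

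The core of the proof, and the step I expect to be the main obstacle, is to check that $\mathscr{S}^{(r,s)}$ is this spectral curve. I would first locate the ramification set: since
\[
\dd x = \Big(\frac{1}{z} - r z^{r - 1}\Big)\dd z = \frac{1 - r z^{r}}{z}\,\dd z,
\]
the set $\mathcal{R}$ consists of the $r$ simple zeros $z_{\alpha}$ with $z_{\alpha}^{r} = 1/r$, permuted transitively by $z \mapsto \zeta z$ for $\zeta^{r} = 1$. This matches the $r$-dimensional state space $V$, the $r$-th roots of unity indexing the canonical idempotent basis. Around each $z_{\alpha}$ I would compute the local involution $z \mapsto \overline{z}$ and the local coordinate in which $x - x(z_{\alpha})$ becomes a square, and then expand $\omega_{0,1} = z^{s}\,\dd x$ to all orders. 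The resulting coefficients must reproduce, entry by entry, the Givental $R$-matrix and translation of the $\Omega^{(r,s)}$ CohFT. This is where the Bernoulli polynomials of Chiodo's formula \eqref{eqn:Omega:formula} enter: the term $B_{m+1}(s/r)\,\kappa_{m}$ corresponds to the translation (carried by the data of $\omega_{0,1}$), while the terms $B_{m+1}(a_{i}/r)\,\psi_{i}^{m}$ correspond to the $R$-matrix, so the verification reduces to recovering the generating series $t e^{tu}/(e^{t}-1)$ from the Laplace-type asymptotics of the curve. I expect this matching to be the most delicate computation, since it demands careful tracking of the normalisations of $\eta$ and of the role played by the logarithm in $x$.

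Granting the match, the DOSS formula expresses the correlators as
\[
\omega_{g,n}^{(r,s)}(z_{1},\ldots,z_{n}) = \sum_{\substack{\alpha_{1},\ldots,\alpha_{n} \in \mathcal{R} \\ k_{1},\ldots,k_{n} \geq 0}} \Big(\int_{\MM_{g,n}} \Omega^{(r,s)}_{g,n}(v_{\alpha_{1}} \otimes \cdots \otimes v_{\alpha_{n}}) \prod_{i = 1}^{n} \psi_{i}^{k_{i}}\Big) \bigotimes_{i = 1}^{n} \dd \xi^{k_{i}}_{\alpha_{i}}(z_{i}),
\]
where the $\xi^{k}_{\alpha}$ are the standard auxiliary differentials attached to each ramification point. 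The final step is to re-expand this ramification-indexed sum in the global basis $\dd(e^{\mu_{i} x(z_{i})})$ as $z_{i} \to 0$. Since $e^{x} = z\,e^{-z^{r}}$, the monomial $e^{\mu x}$ singles out the residue class $-\overline{\mu} \bmod r$, so the sum over $r$-th roots of unity collapses the $r$ ramification points into a single $\mu$-labelled term carrying the type $-\overline{\mu}$ on the $\Omega$-class. The combinatorial prefactor $\prod_{i} (\mu_{i}/r)^{\lfloor \mu_{i}/r \rfloor}/\lfloor \mu_{i}/r \rfloor!$ then emerges from a Lagrange-inversion (equivalently saddle-point) computation of this change of basis, governed by inverting $x$ near $z = 0$ through $e^{-\mu z^{r}} = \sum_{k} (-\mu)^{k} z^{rk}/k!$, while the powers of $r$ and $s$ in $r^{(2g - 2 + n)(1 + s/r) + |\mu|/r}/s^{2g - 2 + n}$ bookkeep the normalisation $1/r$ of $\eta$, the Euler-characteristic weighting, and the rescaling $y = z^{s}$. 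Assembling the floor functions correctly across the root-of-unity sum is the one point of genuine care in this last step, but it is routine once the change of basis is set up.
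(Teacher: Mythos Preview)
The paper does not prove this theorem; it is quoted from \cite{lew-pop-sha-zvo17}, and the sentence immediately preceding it (``An important fact established in \cite{dun-ora-sha-spi14} \ldots This is at the origin of the following result'') is the full extent of the argument given here. Your sketch is precisely the argument that sentence points to: identify $\Omega^{(r,s)}$ as a semisimple CohFT, read off its Givental data from Chiodo's formula, match it with the local germs of $\mathscr{S}^{(r,s)}$ at the $r$-th roots of $1/r$, and invoke DOSS; the paper then records the outcome in the basis $a^{-1}\phi_{m}^{a}$ via \eqref{Equation3}, which is exactly your final change-of-basis step. So your approach coincides with the one the paper attributes to \cite{lew-pop-sha-zvo17}, just spelled out in more detail than the paper itself gives.
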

For $a \in \{1,\ldots,r\}$, let us introduce the functions $\phi_{-1}^{a}(z) = z^{a}$, and for $m \geq 0$ inductively we set
\begin{equation}
\label{phibasis00} \phi_{m}^{a}(z) = \partial_{x(z)} \phi_{m - 1}^{a}(z) = \frac{z}{1 - rz^{r}}\,\partial_{z} \phi_{m - 1}^{a}(z).
\end{equation}
As $z \rightarrow 0$ we have the expansion
\begin{equation}
\label{expfun} a^{-1}\phi_{m}^{a}(z) \approx \sum_{j \geq 0} \frac{(jr + a)^{j}}{j!}\,e^{(jr + a)x(z)}
\end{equation}
The theorem above can be reformulated in terms of the free energies as
\begin{equation}
\label{Equation3} 
\begin{split}
F_{g,n}^{(r,s)}(z_1,\ldots,z_n) & := \int_{0}^{z_1} \cdots \int_{0}^{z_n} \omega_{g,n}(z_1,\ldots,z_n) \\
& = \sum_{\substack{m_1,\ldots,m_n \geq 0 \\ 1 \leq a_1,\ldots,a_n \leq r}} \frac{r^{(2g - 2 + n)(1 + s/r) + \sum_{i = 1}^n a_i/r}}{s^{2g - 2 + n}} \bigg(\int_{\overline{\mathcal{M}}_{g,n}} \Omega^{(r,s)}_{g;r - a} \prod_{i = 1}^n (\psi_i/r)^{d_i} \bigg) \prod_{i = 1}^n a_i^{-1}\phi_{m_i}^{a_i}(z_i).
\end{split}
\end{equation}

\vspace{0.2cm}

\subsection{Weighted Hurwitz numbers and topological recursion}
\label{sec:wHn}

Let 
$$
\hat \psi(\hslash^2, y)\qquad  \text{and} \qquad \hat{y}(\hslash^2,z) = \sum_{m \geq 1} \hat y_m(\hslash^2) z^m
$$
be two bivariate formal power series, with $\hat{\psi}(\hslash^2,0) = 0$. Following \cite{bych-dun-kaz-sha}  we introduce the partition function
\begin{equation}\label{Zpsiy}
Z_{\hat{\psi},\hat{y}} = \sum_{\lambda} s_\lambda\bigg(\frac {p_1}{\hslash},\frac{p_2}{\hslash},\ldots\bigg) s_\lambda\bigg(\frac {\hat y_1(\hslash^2)}{\hslash},\frac{\hat{y}_2(\hslash^2)}{\hslash},\ldots\bigg) \exp{\left(  \sum_{(i,j) \in \lambda} \hat \psi\big(\hslash^2, \hslash(i-j)\big)  \right)}.
\end{equation}
Here, $p_1,p_2,\ldots$ are the power-sum generators of the ring of symmetric polynomials, and $s_{\lambda}$ is the Schur basis, indexed by partitions $\lambda$; $(i,j) \in \lambda$ means that $i  \in \{1,\ldots,\ell(\lambda)\}$ and $j \leq \{1,\ldots,\lambda_i\}$. This $Z_{\hat \psi,\hat y}$ is a hypergeometric tau-function of KP hierarchy with respect to the times $(p_k/k)_{k > 0}$ in the sense of Harnad and Orlov  and admits an interpretation in terms of weighted double Hurwitz numbers \cite{HarnadOrlov,HarnadGuay}. More specifically, it encodes the weighted enumeration of (possibly disconnected) branched covers of $\mathbb{P}^1$ with ramification profile above $0$ tracked by the $p$-variables, ramification profile above $\infty$ tracked by the $\hat{y}$-variables,  type of ramifications elsewhere specified by the weight generating series $\psi$, and topology tracked by the $\hslash$-variable. 
We are interested in the coefficients of expansion
$$
WH^{(\hat{\psi},\hat{y})}_{g,n}(\mu_1,\ldots,\mu_n) = \big[p_{\mu_1} \cdots p_{\mu_n} \hslash^{2g - 2}\big]\,\,\ln Z_{\hat{\psi},\hat{y}},
$$
which restricts the enumeration to connected covers. In \cite{bych-dun-kaz-sha} the representation of $Z_{\hat{\psi},\hat{y}}$ as expectation values in the semi-infinite wedge is the starting point of a detailed analysis of the structural properties of $WH_{g,n}^{(\hat{\psi},\hat{y})}$ which led to topological recursion results in a rather general form. Similar results had been established previously in \cite{ACEH3} by different methods and for a more restricted class of $\hslash$-independent $\hat{\psi}$ and $\hat{y}$.

\medskip
For our purposes, it is sufficient to summarise these results for the specific family of weights
\begin{equation}
\label{PQpsi}
\hat{\psi}(\hslash^2,y) = S(\hslash \partial_y)P(y),\qquad \hat{y}(\hslash^2,z) = Q(z) \qquad \text{with} \qquad S(z) = \frac{\text{sinh}(z/2)}{z/2} = 1 + O(z^2),
\end{equation}
and where $P,Q$ are two polynomials. In this setting consider the spectral curve parametrised by $z \in \mathbb{C}^*$
\begin{equation}
\label{Shatpsi}
\mathscr{S}^{(\hat{\psi},\hat{y})}\,: \qquad x(z) = \ln z - \hat{\psi}(0,y(z)),\qquad y(z) = \hat{y}(0,z),
\end{equation}
and denote $\omega_{g,n}^{(\hat{\psi},\hat{y})}$ the corresponding correlators of the topological recursion. Notice that the spectral curve does not depend on the parameter $\hslash$, but the weight $\hat{\psi}$ does.

\begin{theorem} \cite{bych-dun-kaz-sha} \label{thm:WHTR} For $g \geq 0$ and $n \geq 1$ such that $2g - 2 + n > 0$, for the weights of the form \eqref{PQpsi}, we have the expansion as $z_i \rightarrow 0$:
$$
\omega_{g,n}^{(\hat{\psi},\hat{y})}(z_1,\ldots,z_n) \approx \sum_{\mu_1,\ldots,\mu_n > 0} WH_{g,n}^{(\hat{\psi},\hat{y})}(\mu_1,\ldots,\mu_n) \bigotimes_{i = 1}^{n} \dd \big(e^{\mu_i x(z_i)}\big),
$$
\end{theorem}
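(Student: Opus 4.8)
The plan is to identify the right-hand side with the topological recursion output via the loop-equation characterisation of the latter. Introduce the candidate differentials
\begin{equation*}
\widetilde\omega_{g,n}(z_1,\ldots,z_n) = \sum_{\mu_1,\ldots,\mu_n > 0} WH_{g,n}^{(\hat\psi,\hat y)}(\mu_1,\ldots,\mu_n)\, \bigotimes_{i=1}^n \dd\big(e^{\mu_i x(z_i)}\big),
\end{equation*}
and show that $\widetilde\omega_{g,n} = \omega_{g,n}^{(\hat\psi,\hat y)}$ for all stable $(g,n)$. I would do this by verifying that the $\widetilde\omega_{g,n}$ satisfy the three properties that pin down the recursion uniquely: (i) the $z_i \to 0$ series extend to globally meromorphic differentials on the Riemann sphere, with poles only at the zeros of $\dd x$, i.e. at $\mathcal{R}$ (plus the universal double pole on the diagonal for $(g,n) = (0,2)$); (ii) the input data match the spectral curve, namely $\widetilde\omega_{0,1} = y\,\dd x$ and $\widetilde\omega_{0,2}$ equal to the Bergman kernel $\dd z_1\,\dd z_2/(z_1 - z_2)^2$; and (iii) the family obeys the linear and quadratic loop equations at each $\alpha \in \mathcal{R}$. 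By the standard reconstruction theorem, (i)--(iii) force the $\widetilde\omega_{g,n}$ to agree with the output of \eqref{Trformula} for the kernel $K_\alpha$.

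First I would realise $Z_{\hat\psi,\hat y}$ as a vacuum expectation value in the semi-infinite wedge, where the hypergeometric content of \eqref{Zpsiy} becomes a diagonal operator built from $\hat\psi$ and the $\hat y$-times enter through a vertex operator. Passing to the connected logarithm and extracting $WH_{g,n}$ expresses the $n$-point generating series as connected correlators of bosonic current operators; Wick's theorem and the explicit commutation relations then yield closed formulas for them. The decisive structural output is that these series are Taylor expansions at $z_i = 0$ of rational differentials whose poles sit only at the zeros of $\dd x$ for the curve $\mathscr{S}^{(\hat\psi,\hat y)}$ of \eqref{Shatpsi}, with $x(z) = \ln z - \hat\psi(0,y(z))$ and $y(z) = \hat y(0,z)$. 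The special form \eqref{PQpsi} of the weights---in particular the factor $S(z) = \sinh(z/2)/(z/2)$ coming from the semiclassical expansion of the energy---is exactly what guarantees that the $\hslash$-grading matches the genus grading and that $x,y$ are the spectral curve functions. Property (ii) is then a direct computation: $\widetilde\omega_{0,2}$ is the universal two-current correlator, and the genus-zero one-point data reproduce $y\,\dd x$.

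The heart of the argument is (iii). The linear loop equation---that $\widetilde\omega_{g,n}(z,\ldots) + \widetilde\omega_{g,n}(\overline z,\ldots)$ is holomorphic as $z \to \alpha$, with $\overline z$ the local deck involution---follows from the invariance of the closed $n$-point formulas under $z \mapsto \overline z$ modulo regular terms. The quadratic loop equation, which controls the combination $\widetilde\omega_{g-1,n+1}(z,\overline z,\ldots) + \sum \widetilde\omega(z,\ldots)\,\widetilde\omega(\overline z,\ldots)$, is the genuine difficulty: it has to be derived from the integrable-system constraints satisfied by $Z_{\hat\psi,\hat y}$ (the cut-and-join / Virasoro operators acting on the tau-function), reorganised through the loop insertion operator into a statement about the double poles at $\mathcal{R}$. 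Establishing this identity---tracking the cancellation of the spurious singularities order by order in the $\hslash$-expansion while carrying the $\hslash$-dependent weight $\hat\psi$---is where the main technical effort lies.

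Once (i)--(iii) hold, the reconstruction theorem identifies the $\widetilde\omega_{g,n}$ with the recursion \eqref{Trformula}, giving precisely the claimed expansion $\omega_{g,n}^{(\hat\psi,\hat y)} \approx \sum_{\mu} WH_{g,n}^{(\hat\psi,\hat y)}(\mu_1,\ldots,\mu_n)\bigotimes_i \dd(e^{\mu_i x(z_i)})$. I expect the quadratic loop equation, together with the bookkeeping that relates the formal $\hslash$-expansion of the \emph{disconnected} fermionic correlators to the \emph{connected} genus expansion, to be the principal obstacle; the analyticity statement (i), though technical, should follow more routinely from the explicit $n$-point formulas.
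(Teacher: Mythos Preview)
The paper does not contain a proof of this statement: Theorem~\ref{thm:WHTR} is quoted verbatim from \cite{bych-dun-kaz-sha} and used as a black box (see the paragraph preceding it and the remark in Section~\ref{sec:polyfirst} that the polynomiality input comes from the ``structural results of \cite{bych-dun-kaz-sha}''). So there is nothing in the paper to compare your argument against.

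That said, your outline is a fair high-level summary of the strategy actually followed in \cite{bych-dun-kaz-sha}: realise $Z_{\hat\psi,\hat y}$ in the semi-infinite wedge, derive closed $n$-point formulas for the connected correlators, establish that these extend to rational differentials with poles only at the zeros of $\dd x$, and then verify the linear and quadratic loop equations so that the reconstruction theorem identifies them with the output of topological recursion on $\mathscr{S}^{(\hat\psi,\hat y)}$. Your identification of the quadratic loop equation and the disconnected-to-connected bookkeeping as the main technical obstacles is also accurate. If anything, you understate the analyticity step (i): in \cite{bych-dun-kaz-sha} deriving the explicit rational form of the $n$-point functions is a substantial part of the work, not a routine consequence.
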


The spectral curve $\mathscr{S}^{(ds,s)}$ of \eqref{Srscurve} is obtained by specialising this family to $Q(z) = z^s$ and $P(z) = z^{d}$. In this case, $WH$ corresponds to the $s$-orbifold Hurwitz numbers with $(d + 1)$-completed cycles and Theorem~\ref{thm:WHTR} was established in \cite{bor-kra-lew-pop-sha20,dun-kra-pop-sha19-2}. Together with Theorem~\ref{rsELSV} it gave an ELSV-like formula for those Hurwitz numbers in terms of classes $\Omega^{(ds,s)}$.

\vspace{0.2cm}

\subsection{Principles of deformation}

\label{sec:principle}

We now describe the main principle exploited in this article, namely the behavior of topological recursion under deformations of spectral curves. We will restrict ourselves to spectral curves as defined in Section~\ref{TRreview}. In particular, they are always equipped with the standard fundamental bidifferential
\begin{equation}
\label{om02std}
\omega_{0,2}(z_1,z_2) = \frac{\dd z_1 \otimes \dd z_2}{(z_1 - z_2)^2}.
\end{equation}
and we will not mention it anymore.

Let $\mathscr{S}_t$ be a spectral curve depending analytically on a parameter $t \in \mathcal{T} \subset \mathbb{C}$, where $\mathcal{T}$ is a neighborhood of the closed unit disk. In other words, we are given $x_t(z),y_t(z)$ such that $x_t'(z)$ and $y_t'(z)$ are rational functions of $z$ that depend analytically on $t$. 
 Then, we denote by $\omega_{g,n}^{t}(z_1,\ldots,z_n)$ the correlators of the topological recursion on $\mathscr{S}_t$, and by
$$
F_{g,n}^{t}(z_1,\ldots,z_n) = \int_{0}^{z_1} \cdots \int_{0}^{z_n} \omega_{g,n}^{t}
$$
the corresponding free energies. We assume that the zeros of $\dd x$ in $\mathbb{C}^*$ are simple for all $t \in \mathcal{T}$. Then, the correlators and the free energies are analytic functions of $t \in \mathcal{T}$ --- this can be seen directly from the definition of topological recursion, see also \cite{TRLimits} for a thorough discussion of analyticity in a more general context.

\begin{definition}
In presence of ambiguity, we keep the notation $\partial_t$ the $t$-derivative at fixed $z$ and rather use $D_t$ for the $t$-derivative at $x_t(z)$ fixed.
\end{definition}
We assume that we can represent
\begin{equation}
\label{Dtydx}
\eta_t(z) := D_t\big(y_t(z) \dd x_t(z)\big) = \big(\partial_t y_t(z)\big) \dd x_t(z) - \big(\partial_t x_t(z)\big) \dd y_t(z) = - \Res_{w = \infty} \omega_{0,2}(z,w) f_t(w)
\end{equation}
for some rational function $f_t(z)$ without poles at the zeroes of $x_t'$, where the $t$-derivative in the left-hand side is computed for fixed $z$. Then, Eynard and Orantin have proved \cite[Theorem 5.1]{eyn-ora-07} that
$$
D_t \omega_{g,n}^{t}(z_1,\ldots,z_n) = - \Res_{z = \infty} \omega_{g,n + 1}^{t}(z_1,\ldots,z_n,z)\,f_t(z),
$$
where $D_t$ is the $t$-derivative for $x_t(z_1),\ldots,x_t(z_n)$ kept fixed. Equivalently, the first variation of the free energy is
\begin{equation}
\label{Dtfgn}
D_t F_{g,n}^t(z_1,\ldots,z_n) = \Res_{z = \infty} F_{g,n + 1}^{t}(z_1,\ldots,z_n,w)\,\dd f_t(z) .
\end{equation}

\medskip

We want to use this to compute $F_{g,n}^{t = 1}$ as a Taylor series
$$
F_{g,n}^{1}(z_1',\ldots,z_n') = \sum_{k = 0}^{\infty} \frac{1}{k!} D_{t}^{k} F_{g,n}^{t}(z_1,\ldots,z_n)\big|_{t = 0}
$$
Here one should keep $x_1(z_i') = x_t(z_i)$ all the way before setting $t = 0$. This requires applying repeatedly $D_t$ to \eqref{Dtfgn}. A subtle point is that $f_t(z)$ at $x_t(z)$ may still depend on $t$.

\begin{definition} \label{defclass} We say that a deformation is \emph{horizontal} if $D_t f_t(z) = 0$, where the $t$-derivative is computed at $x_t(z)$ fixed. Otherwise, we say that the deformation is \emph{oblique}.
\end{definition}

For an horizontal deformation, we simply have
$$
F_{g,n}^{1}(z_1',\ldots,z_n') = \sum_{k = 0}^{\infty} \frac{1}{k!} \Res_{w_1 = \infty} \cdots \Res_{w_k = \infty} F_{g,n + k}^{0}(z_1,\ldots,z_n,w_1,\ldots,w_k) \dd f_0(w_1) \otimes \cdots \otimes \dd f_0(w_k),
$$
with $x_1(z_i') = x_0(z_i)$.  For an oblique deformation, the combinatorics of higher derivatives is more involved and leads to \cite[Section 5.3]{BDKLM}
\begin{equation}
\label{Taylorser} F_{g,n}^{1}(z_1',\ldots,z_n') = \sum_{k = 0}^{\infty} \frac{1}{k!} \sum_{l_1,\ldots,l_k = 0}^{\infty}  \Res_{w_1 = \infty} \cdots \Res_{w_k = \infty} F_{g,n + k}^{0}(z_1,\ldots,z_n,w_1,\ldots,w_k) \bigotimes_{c = 1}^{k} \dd\bigg(\frac{D_t^{l_c}f_t(w_c)}{(l_c + 1)!}\bigg)\bigg|_{t = 0}, 
\end{equation}
where again $x_1(z_i') = x_0(z_i)$.

\medskip

The enumerative information in the free energy is typically stored in its decomposition on a suitable basis of functions, or equivalently in its series expansion near a certain point (for us, $z_i = 0$) using the variable $x_t(z_i)$.  For instance, for the case of the spectral curve $\mathscr{S}^{(r,s)}$ of Section~\ref{Secrschi}, this is achieved by the basis
$$
\forall (a,m) \in \{1,\ldots,r\} \times \mathbb{Z}_{\geq 0}
\qquad a^{-1}\phi_m^{a}(z) = \partial_{x(z)}^{m + 1}\big(z^a\big)
$$
through \eqref{Equation3}, or the series expansion \eqref{Hgnrs}. The equivalence between the two came from the series expansion of $\phi_{m}^{a}(z)$ as $z \rightarrow 0$ given in \eqref{expfun}. In the next three sections, we are going to study three families of  spectral curves which all fit the previous setting. For each of them, we will
\begin{itemize}
\item compute the deformation $1$-form \eqref{Dtydx}, \textit{i.e.} find $f_t(z)$. In all three cases, It describes in fact an oblique deformation;
\item decompose $F^0_{g,n + k}$ on a good basis of functions $a^{-1}\phi_{m}^{a}$, so as to express the right-hand side of \eqref{Taylorser} solely in terms of the corresponding coefficients;
\item evaluate the residue pairings in \eqref{Taylorser}, which amounts to compute
$$
T_{m}^{a,(l)} := \Res_{z = \infty} a^{-1}\phi_{m}^{a}(z) \dd\big(D_t^{l}f_t(z)\big)\big|_{t = 0}.
$$
\item expand $F_{g,n}^{1}(z_1',\ldots,z_n')$ of \eqref{Taylorser} as $z_i \rightarrow 0$, using the variables $x_1(z_i') = x_0(z_i)$. This only requires knowing the expansion of $a^{-1}\phi_{m}^{a}(z)$ as $z \rightarrow 0$ in the variable $e^{x_1(z')} = e^{x_0(z)}$.
\end{itemize}
These steps allow to express the $t = 1$ enumerative information as a Taylor series involving only the $t = 0$ enumerative information.  

\vspace{1cm}

\section{The first deformation}
\label{sec:Deformation:one}

\vspace{0.5cm}

\subsection{Setting}

Let $d \geq 1$, $s \geq 2$ and set $r = ds$. Introduce a polynomial $Q(z) = \sum_{j = 1}^{s} q_j\,z^{j}$ and $\sigma \in \mathbb{C}$ such that $q_{s} \sigma \neq 0$, and consider the spectral curve with
\begin{equation}
\label{Dsp1} \mathscr{S}_{1}^{\text{1st}}\,: \quad \left\{\begin{array}{lll} x(z) & = & \ln z - \sigma \big(Q(z)\big)^{d} \\[2pt] y(z) & = & Q(z) \end{array}\right.
\end{equation}
We see it as the $t = 1$ specialisation of the family of spectral curves
$$
\mathscr{S}_t^{\text{1st}} \,: \quad \left\{\begin{array}{lll} x_t(z) & = & \ln z - \sigma\big(q_s z^s + t \tilde{Q}(z)\big)^{d} \\[2pt] y_t(z) & = & q_sz^s + t\tilde{Q}(z) \end{array}\right.
$$
with $\tilde{Q}(z) = \sum_{j = 1}^{s -1} q_jz^j$. At $t = 0$ we get the spectral curve
$$
\mathscr{S}_0^{\text{1st}} \,: \quad \left\{\begin{array}{lll} x_0(z) & = & \ln z - \sigma q_s^{d} z^r  \\[2pt] y_0(z) & = &  q_sz^s \end{array}\right.
$$
If $\sigma$ is chosen small enough relative to $q_1,\ldots,q_s$, the zeros of $x_t'(z)$ remain simple so that the assumptions of Section~\ref{sec:principle} are satisfied. We call $F_{g,n}^{t}$ the free energies associated to $\mathscr{S}_t^{\text{1st}}$.

\medskip

The spectral curve $\mathscr{S}_0^{\text{1st}}$ is related to $\mathscr{S}^{(r,s)}$ of Section~\ref{Secrschi}, which we here denote $\mathscr{S}_{[0]}$:
$$
\mathscr{S}_{[0]} \,:\quad \left\{\begin{array}{lll} x_{[0]}\!(z) & = & \ln z - z^r \\[2pt] y_{[0]}\!(z) & = & z^{s} \end{array}\right.
$$
Indeed, if we use $\tilde{z} = (q_s^{1/s}\sigma^{1/r})z$, we have
$$
x_0(z) = x_{[0]}\!(\tilde{z}) - \frac{\ln q_s}{s} - \frac{\ln \sigma}{r},\qquad y_0(z) = \sigma^{-s/r} y_{[0]}\!(z),
$$
while the standard $\omega_{0,2}$ in \eqref{om02std} has the same expression in the $z$ or the $\tilde{z}$-variables. The homogeneity properties of topological recursion under rescaling imply
\begin{equation}
\label{Fgn000} F_{g,n}^{0}(z_1,\ldots,z_n) = \sigma^{(2g - 2 + n)s/r} \cdot F_{g,n}^{[0]}(\tilde{z}_1,\ldots,\tilde{z}_n).
\end{equation}
The basis of functions we want to use to decompose the free energies is $\phi_{m}^{a,[0]}(\tilde{z})$ from \eqref{phibasis00}, namely
\begin{equation}
\label{phihat0}
\forall (a,m) \in \{1,\ldots,r\} \times \mathbb{Z}_{\geq 0}\qquad a^{-1}\phi_{m}^{a,[0]}(\tilde{z}) = \partial_{x_{[0]}\!(\tilde{z})}^{m + 1} \big(\tilde{z}^{a}\big).
\end{equation}
Equation \eqref{expfun} gives its expansion as $z \rightarrow 0$ in the variable $e^{x_{[0]}\!(\tilde{z})}$, and its expansion in the new variable $e^{x_0(z)} = q_s^{-1/s}\sigma^{-1/r} e^{x_{[0]}\!(\tilde{z})}$ reads:
\begin{equation}
\label{aphi0ep}
a^{-1}\phi_m^{a,[0]}(\tilde{z}) \,\,\mathop{\approx}_{z \rightarrow 0} \,\, \sum_{j \geq 0}  q_s^{(jr + a)/s} \sigma^{j + a/r}\,\frac{(jr + a)^{j}}{j!} e^{(jr + a)x_0(z)}.
\end{equation}

\vspace{0.2cm}

\subsection{The deformation $1$-form} We first compute the deformation $1$-form
$$
\eta_t(z) =  \big(\partial_t y_t(z) \dd x_t(z)\big) - \big(\partial_t x_t(z)\big) \dd y_t(z) = \frac{\tilde{Q}(z)}{z} \dd z.
$$
We recognise that it takes the form
$$
\eta_t(z) = - \Res_{w = \infty} \omega_{0,2}(z,w) f_t(w) \qquad \text{with}\qquad f_t(z) = \sum_{j = 1}^{s - 1} \frac{q_j}{j}\,z^j.
$$ 
Then, we want to evaluate
$$
T_{m}^{a,(l)} = \Res_{z = \infty} a^{-1}\phi_{m}^{a,[0]}(\tilde{z}) \dd \big(D_t^{l} f_t(z)\big)\big|_{t = 0}.
$$

The case $d = 1$ corresponds to $r = s$ and was treated in \cite{BDKLM}. As the function $f_t(z)$ does not depend on $d$, it is straightforward to adapt \cite[Lemma 5.2]{BDKLM} and obtain the following expression.

\begin{lemma}\label{QTev}
For any $l \geq 0$, we have as $z \rightarrow \infty$
$$
D_t^{l}f_t(z)\big|_{t = 0} = \sum_{j = 1}^{s - 1} \frac{\mathcal{Q}_j^{(l)}}{s - j} z^{s - j} + O(1),
$$
where for $j \in \{1,\ldots,s-1\}$ we set
\begin{equation}
\label{Qsum} \frac{\mathcal{Q}_j^{(l)}}{(l + 1)!} = \frac{(-1)^l}{q_s^l} \sum_{\substack{\rho \in \mathcal{P}_{s - 1} \\ \ell(\rho) = l + 1 \\ |\rho| = j}} \bigg[\frac{s - |\rho|}{r}\bigg]_{\ell(\rho) - 1}\cdot \frac{\vec{q}_{s - \rho}}{|\text{Aut}(\rho)|},
\end{equation}
Besides, 
\begin{equation}
\label{Tmal1} T_{m}^{a,(l)} = \delta_{m,0} \cdot \left\{\begin{array}{lll} r^{-1}\sigma^{a/r - 1}q_s^{(a - r)/s} \mathcal{Q}_{s -r + a}^{(l)} & & \text{if} \,\,a \in \{r - s + 1,\ldots,r - 1\} \\[2pt] 0 & & \text{otherwise} \end{array}\right.
\end{equation}
\end{lemma}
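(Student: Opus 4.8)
The plan is to prove the two assertions of the lemma in turn, following and lightly adapting the computation of \cite[Lemma 5.2]{BDKLM}, which treated the case $d=1$ (so $r=s$). The decisive simplification is that the deformation function $f_t(z)=\sum_{j=1}^{s-1}\frac{q_j}{j}z^j$ found above is independent of both $t$ and $d$: all the $d$-dependence of the problem is confined to the map $x_t$, hence to the oblique derivative $D_t$. I would first record that, since $f_t$ does not depend on $t$ at fixed $z$, one has $D_t f_t=-\frac{\partial_t x_t}{\partial_z x_t}\,f_t'$, and more generally that each further application of $D_t$ consists of the operator $-\frac{\partial_t x_t}{\partial_z x_t}\partial_z$ acting on the previous output, together with the explicit $t$-dependence carried by the coefficient $-\frac{\partial_t x_t}{\partial_z x_t}$.

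\textbf{First formula.} I would iterate this $l$ times at $t=0$ and extract the behaviour as $z\to\infty$. At $t=0$ one computes $\partial_t x_t|_{0}=-\sigma d\,q_s^{d-1}z^{s(d-1)}\tilde Q(z)$ and $\partial_z x_t|_{0}=\frac1z\big(1-\sigma r q_s^{d}z^{r}\big)$, and I would verify the key cancellation: the explicit powers of $d$ recombine through $ds=r$, so that the leading coefficient of the flow $-\frac{\partial_t x_t}{\partial_z x_t}$ is independent of $d$. Organising the $l$-fold iteration by a Fa\`{a} di Bruno--type expansion indexed by a partition $\rho$ with $\ell(\rho)=l+1$ parts, recording how the successive derivatives are distributed, then reproduces degree by degree the partition sum defining $\mathcal{Q}_j^{(l)}$ in \eqref{Qsum}; this step is essentially inherited from \emph{loc. cit.} I expect the bookkeeping of these iterated oblique derivatives, with its internal cancellations, to be the most laborious part, but it introduces no new idea beyond \cite{BDKLM}.

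\textbf{Residue evaluation.} The genuinely new input is \eqref{Tmal1}, where the basis $a^{-1}\phi_m^{a,[0]}$ lives on the $r$-spin curve $x_{[0]}(\tilde z)=\ln\tilde z-\tilde z^{r}$. Here I would expand $a^{-1}\phi_m^{a,[0]}(\tilde z)=\partial_{x_{[0]}}^{m+1}(\tilde z^{a})$ from \eqref{phihat0} near $z=\infty$ using $\partial_{x_{[0]}}=\frac{\tilde z}{1-r\tilde z^{r}}\partial_{\tilde z}$: this produces a series supported on the exponents $a-(m+1)r,\ a-(m+2)r,\dots$, all congruent to $a$ modulo $r$ and all bounded above by $a-(m+1)r$, with leading coefficient $-\tfrac1r$. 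Since $\dd\big(D_t^{l}f_t\big)|_{t=0}$ has polynomial part supported on the exponents $z^{s-j-1}$ with $j\in\{1,\dots,s-1\}$ (powers $0$ through $s-2$) and remainder of order $\le z^{-2}$, I would evaluate, using $\Res_{z=\infty}g\,\dd z=-[z^{-1}]\,g$,
\begin{equation*}
T_m^{a,(l)}=\Res_{z=\infty} a^{-1}\phi_m^{a,[0]}(\tilde z)\,\dd\big(D_t^{l}f_t(z)\big)\big|_{t=0}=-\big[z^{-1}\big]\Big(a^{-1}\phi_m^{a,[0]}(\tilde z)\,\big(D_t^{l}f_t\big)'\Big),
\end{equation*}
by matching exponents. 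Because consecutive $\phi$-exponents differ by $r$ while the window of exponents able to yield $z^{-1}$ is the interval $[1-s,-1]$, of length $s-1<r$, at most one $\phi$-exponent can contribute; for $m\ge1$ the top exponent $a-(m+1)r\le -r<1-s$ already lies below the window, forcing $T_m^{a,(l)}=0$, which is the factor $\delta_{m,0}$. For $m=0$ the condition $a-r\in[1-s,-1]$ selects exactly $a\in\{r-s+1,\dots,r-1\}$ and pairs the leading $\phi$-term with the monomial $j=s-r+a$; substituting the rescaling $\tilde z=q_s^{1/s}\sigma^{1/r}z$ turns the coefficient $-\tfrac1r$ into $-\tfrac1r\,q_s^{(a-r)/s}\sigma^{a/r-1}$, and the residue sign then produces \eqref{Tmal1}.

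\textbf{Main obstacle.} The principal difficulty is conceptual rather than computational: one must keep careful track of the congruence-mod-$r$ structure of the $r$-spin basis $\phi_m^{a,[0]}$, which is invisible in the $r=s$ setting of \cite{BDKLM}. It is precisely this congruence, combined with the short length $s-1$ of the relevant window of exponents, that simultaneously yields the vanishing for $m\ge1$ and pins down the surviving range $\{r-s+1,\dots,r-1\}$ of types $a$ together with the single index $j=s-r+a$ contributing to the residue.
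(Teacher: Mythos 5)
Your strategy coincides with the paper's, which disposes of the first claim in a single line by noting that $f_t$ is independent of $d$ and deferring to \cite[Lemma 5.2]{BDKLM}; your residue evaluation of $T_m^{a,(l)}$ is correct and is in fact more detailed than anything the paper records (the exponent-window argument of length $s-1<r$, the selection of $m=0$ and $a\in\{r-s+1,\ldots,r-1\}$ with $j=s-r+a$, and the bookkeeping of the rescaling $\tilde z=q_s^{1/s}\sigma^{1/r}z$ all check out, including the cancellation of the two signs that produces $+r^{-1}$ in \eqref{Tmal1}).

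There is, however, one point you should not gloss over. Your own key observation --- that at $t=0$ one has $-\partial_t x_t/\partial_z x_t=\sigma d q_s^{d-1}z^{s(d-1)+1}\tilde Q(z)/(1-\sigma r q_s^d z^r)$, whose expansion at $z=\infty$ has leading part $-\tfrac{1}{sq_s}z^{1-s}\tilde Q(z)$ independent of $d$, with corrections suppressed by $z^{-r}$ and hence invisible in the polynomial part of $D_t^l f_t$ --- implies that the polynomial part of $D_t^l f_t|_{t=0}$ is \emph{verbatim} the $d=1$ answer of \cite{BDKLM}; equivalently, the flowed variable satisfies $q_s z_t^s+t\tilde Q(z_t)=q_s z^s+O(z^{-1})$, the same functional equation as for $d=1$. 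Carrying this out (for instance by Lagrange inversion in $z^{-s}$) produces the rising factorial $\bigl[\tfrac{s-|\rho|}{s}\bigr]_{\ell(\rho)-1}$, not $\bigl[\tfrac{s-|\rho|}{r}\bigr]_{\ell(\rho)-1}$ as printed in \eqref{Qsum}; a direct check at $l=1$ confirms that the coefficient of $z^{s-j}$ in $D_t f_t|_{t=0}$ is $-\tfrac{1}{sq_s}\sum_{a+b=2s-j}q_a q_b$ rather than $-\tfrac{1}{rq_s}(\cdots)$. So for $d\ge 2$ you cannot ``reproduce degree by degree the partition sum defining $\mathcal{Q}_j^{(l)}$ in \eqref{Qsum}'' as stated: your argument proves the version with $s$ in the denominator of the rising factorial, which is the one actually used in Theorem~\ref{th:A} and in the final vanishing display of Section~\ref{sec:polyfirst} (the $r$ appearing in \eqref{Qsum} and \eqref{DHrs} looks like a slip inherited from the notation of \cite{BDKLM}, where $r=s$). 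You should flag and resolve this discrepancy explicitly; otherwise your claimed conclusion does not match what your own computation yields.
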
 
The conditions in the sum \eqref{Qsum} force $\rho$ to be non-empty. Note that $d = 1$ corresponds to 

\vspace{0.2cm}

\subsection{The Taylor series}

We now have all the ingredients to evaluate the Taylor series \eqref{Taylorser}:
\begin{itemize}
\item The free energies $F_{g,n + k}^{0}$ are equal to the $F_{g,n + k}^{[0]}$ up to a rescaling given by \eqref{Fgn000};
\item The decomposition of $F_{g,n + k}^{[0]}$ on the basis functions $a^{-1}\phi_{m}^{a,[0]}$ is given in \eqref{Equation3} in terms of intersection indices of the classes $\Omega^{(ds,s)}$;
\item The residue pairing of the basis functions with $\dd(D_t^{l} f_t)\big|_{t = 0}$ is given by \eqref{Tmal1};
\end{itemize}
Substituting these information in the Taylor series \eqref{Taylorser} we arrive to
\begin{equation}
\label{equation8} \begin{split}
 F_{g,n}^{1}(z_1',\ldots,z_n') & = \bigg(\frac{\sigma^{s/r}r^{1 + s/r}}{s}\bigg)^{2g - 2 + n}  \sum_{\substack{1 \leq a_1,\ldots,a_n \leq r \\ m_1,\ldots,m_n \geq 0}} r^{\sum_{i = 1}^n a_i/r} \prod_{i = 1}^n a_i^{-1}\phi_{m_i}^{a_i,[0]}(\tilde{z}_i) \\
& \quad \times \sum_{k \geq 0} \frac{(s^{-1}r^{s/r})^{k}}{k!} \sum_{\substack{r - s + 1 \leq b_1,\ldots,b_{k} \leq r - 1 \\ l_1,\ldots,l_k \geq 0}} r^{\sum_{c = 1}^{k} b_{c}/r} \sigma^{\sum_{c = 1}^{k} (s - r + b_{c})/r} q_s^{\sum_{c = 1}^{k} (b_{c} - r)/s} \\
& \quad \qquad \qquad \qquad \qquad\qquad \qquad \times \bigg(\int_{\overline{\mathcal{M}}_{g,n + k}} \Omega^{(r,s)}_{g;r - \boldsymbol{a},r -\boldsymbol{b}} \prod_{i = 1}^n (\psi_i/r)^{m_i}\bigg) \prod_{c = 1}^{k} \frac{\mathcal{Q}_{s - r + b_{c}}^{(l_{c})}}{(l_{c} + 1)!}.
\end{split} 
\end{equation} 
where $x_1(z'_i) = x_0(z_i) = q_s^{-1/s}\sigma^{-1/r} x_{[0]}\!(\tilde{z}_i)$. Now, we expand the right-hand side as $z_i \rightarrow 0$ using the variables $e^{x_0(z)}$:
\begin{equation}
\label{DHdef}
F_{g,n}^{1}(z_1',\ldots,z_n') \approx \sum_{\mu_1,\ldots,\mu_n > 0} H_{g,n}^{\text{1st}}(\mu_1,\ldots,\mu_n) \prod_{i = 1}^n e^{\mu_i x_1(z_i')}.
\end{equation}
The computation of the coefficients $H_{g,n}^{\text{1st}}$ is achieved via the expansion of the basis functions given in \eqref{aphi0ep}, and we also insert the expression for $\mathcal{Q}$ provided by Lemma~\ref{QTev}. Recalling $r = ds$, this yields for any partition $\mu$ of length $n > 0$:
\begin{equation} 
\label{DHrs} \begin{split}
H_{g,n}^{\text{1st}}(\mu) & = \frac{r^{(2g - 2 + n)(1 + s/r)}}{s^{2g - 2 + n}} \sum_{\substack{\overline{\lambda} \in \overline{\mathcal{P}}_s \\ |\overline{\lambda}| = |\mu|}}  \sigma^{(2g - 2 + n + \ell(\overline{\lambda}))s/r} \prod_{i = 1}^n \frac{(\mu_i/r)^{\lfloor \mu_i/r \rfloor}}{\lfloor \mu_i/r \rfloor !} \cdot \vec{q}_{\overline{\lambda}}  \\
& \quad \times\left( \sum_{k = 0}^{\ell(\lambda)} \frac{(-1)^{\ell(\lambda) - k} (r/s)^{k}}{k!}  \sum_{\substack{\boldsymbol{\rho} \in \mathcal{P}\!\!\mathcal{P}_{s - 1}^{k} \\ \sqcup \boldsymbol{\rho} = s - \lambda}} \prod_{c = 1}^{k} \frac{\big[\frac{s - |\rho^{(c)}|}{r}\big]_{\ell(\rho^{(c)}) - 1}}{|\text{Aut}(\rho^{(c)})|}  \cdot \bigg(\int_{\overline{\mathcal{M}}_{g,n + k}} \frac{\Omega^{(r,s)}_{g;-\overline{\mu},s - |\boldsymbol{\rho}|}}{\prod_{i = 1}^n \big(1 - \frac{\mu_i}{r}\psi_i\big)}\bigg)\right).
\end{split} 
\end{equation}
The notation for partitions are explained in Section~\ref{Sec:not}. The exponent $k$ in $\mathcal{P}\!\!\mathcal{P}_{s -1}^{k}$ just means that we are considering $k$-tuples of $(s - 1)$-partitions. If $\lambda$ is empty, \textit{i.e.} $\overline{\lambda}$ has only parts $s$, the sum in the last line only has a $k = 0$ term which is equal to $1$; if $\lambda$ is non-empty, the sum in the last line should start at $k = 1$.

\vspace{0.2cm}

\subsection{Polynomiality and vanishing}
\label{sec:polyfirst}
In view of Section~\ref{sec:wHn} and Theorem~\ref{thm:WHTR}, the numbers $H^{\text{1st}}_{g,n}(\mu)$ defined by \eqref{DHdef} coincide with the weighted double Hurwitz numbers $WH^{(\hat{\psi},\hat{y})}_{g,n}(\mu)$ in the special case
\begin{equation*}
\begin{split}
\hat{\psi}(\hslash^2,y) & = S(\hslash \partial_y) y^{d} = \sum_{j = 0}^{\lfloor d/2 \rfloor} \frac{d!}{(d - 2j)!(2j + 1)!}\,(\hslash/2)^{2j} y^{d - 2j}, \\
\hat{y}(\hslash^2,z) & = Q(z) = \sum_{j = 1}^{s} q_j z^j.
\end{split}
\end{equation*}
In particular, for any fixed $\mu$,  $H^{\text{1st}}_{g,n}(\mu)$ must be a polynomial in $q_1,\ldots,q_s$ and cannot contain negative powers of $q_s$. Negative powers of $q_s$ occur when $\lambda$ is a non-empty $(s - 1)$-partition such that $|\lambda| > |\mu|$. In terms of the non-empty $(s - 1)$-partition $\nu = s - \lambda' = \check{\lambda}'$, this condition reads
\begin{equation}
\label{munud}
|\mu| + |\nu| < s\ell(\nu).
\end{equation}
Due to \eqref{eq:acondition} and recalling that $r = ds$, we need $|\mu| + |\nu|$ to be divisible by $s$ for the $\Omega$-class itself to be non-zero. Therefore, the condition can be written in the stronger form
\begin{equation}
\label{munuaa}|\mu| + |\nu| \leq s(\ell(\nu) - 1).
\end{equation}

This gives us the following vanishing of $\Omega$-integrals, valid for any non-empty partition $\mu$ and any $(s - 1)$-partition $\nu$ satisfying \eqref{munuaa}
\begin{equation}
\sum_{k = 1}^{\ell(\nu)} \frac{(-1)^{\ell(\nu) - k} (r/s)^{k}}{k!} 
\sum_{
\substack{\boldsymbol{\rho} \in (\mathcal{P}\!\!\mathcal{P}_{s - 1})^{k} 
\\ 
\sqcup \boldsymbol{\rho} = \nu}
}
\prod_{c = 1}^{k} 
\frac{\big[\frac{s - |\rho^{(c)}|}{s}\big]_{\ell(\rho^{(c)}) - 1}}{|\text{Aut}(\rho^{(c)})|}  \cdot \int_{\overline{\mathcal{M}}_{g,\ell(\mu) + k}} \frac{\Omega^{(r,s)}_{g;-\overline{\mu},s - |\boldsymbol{\rho}|}}{\prod_{i = 1}^{\ell(\mu)} \left(1 - \frac{\mu_i}{r}\psi_i\right)} = 0.
\end{equation}

If furthermore $\nu$ is bounded, \textit{i.e.} $\min_{i \neq j} (\nu_i + \nu_j) \geq s$ for any $i \neq j$, the only way to write $\nu$ as a concatenation of $k$ partitions of size $\leq s - 1$ is to take $k = \ell(\nu)$ and  $\rho^{(1)} = (\nu_1),\ldots,\rho^{(k)} = (\nu_k)$ up to permutation. Therefore, the sum has $k!$ equal terms and we get the vanishing of a single $\Omega$-integral
\begin{equation}
\label{bvanish}
\int_{\overline{\mathcal{M}}_{g,\ell(\mu) + \ell(\nu)}} \frac{\Omega^{(r,s)}_{g;-\overline{\mu},s - \nu}}{\prod_{i = 1}^{\ell(\mu)} \left(1 - \frac{\mu_i}{r}\psi_i\right)} = 0.
\end{equation}
 
This proves Theorem~\ref{th:A}. Notice that in the first deformation $r = ds$. We have checked numerically that such a result --- for instance \eqref{bvanish} --- does not hold if $s$ does not divide $r$.

\vspace{1cm}

\section{The second deformation}
\label{sec:Deformation:two}

\vspace{0.5cm}

\subsection{Setting}
\label{S41}
We set $s \geq 1$, $d \geq 2$ (notice the difference with the assumption in Section~\ref{sec:Deformation:one}) and set $r = ds$. We introduce  two polynomials
$$
P(y) = \sum_{j = 1}^{d} p_j y^{j},\qquad  Q(z) = \sum_{i = 1}^{s} q_i z^{i},
$$
 a parameter $\sigma \in \mathbb{C}$ such that $\sigma q_{s} p_{d} \neq 0$
 and consider the spectral curve
$$
\mathscr{S}_{1}^{\text{2nd}}\,: \quad \left\{\begin{array}{lll} x(z) & = & \ln z - \sigma P( Q(z)) \\[2pt] y(z) & = & Q(z) \end{array}\right.
$$
If $P(y) = y^{d}$ this is the spectral curve $\mathscr{S}_{1}^{\text{1st}}$ of Section~\ref{sec:Deformation:one}. We take a different perspective now as the deformation will consist in turning on the other coefficients of $P$ while keeping $Q$ fixed. Namely, we see $\mathscr{S}_1^{\text{2nd}}$ as the $t = 1$ specialisation of the family of spectral curves
$$
\mathscr{S}_{t}^{\text{2nd}}\,: \quad \left\{\begin{array}{lll} x_t(z) & = & \ln z - \sigma\big(p_d Q(z)^{d} + t\tilde{P}(Q(z))\big) \\[2pt] y_t(z) & = & Q(z) \end{array}\right.
$$
with $\tilde{P}(y) = \sum_{j = 1}^{d - 1} p_jy^{j}$. If $\sigma$ is chosen small enough relative to $q_1,\ldots,q_s$, the zeros of $x_t'(z)$ remain simple so that the assumptions of Section~\ref{sec:principle} are satisfied.  We denote $F_{g,n}^{t}$ the free energies associated to $\mathscr{S}_{t}^{\text{2nd}}$. 

\medskip

At $t = 0$ we have
$$
\mathscr{S}_{0}^{\text{2nd}}\,: \quad \left\{\begin{array}{lll} x_{0}(z) & = &  \ln z - \sigma p_d Q(z)^{d} \\[2pt] y_0(z) &=& Q(z) \end{array}\right.\,.
$$
Up to replacing $\sigma$ with $\sigma p_d$ this is the spectral curve $\mathscr{S}_{1}^{\text{1st}}$ so we have the intersection-theoretic formula \eqref{equation8} for $s \geq 2$. Namely, the correlation functions of $\mathscr{S}_{0}^{\text{2nd}}$ can be written:
\begin{equation}
\label{Equation322}
\begin{split}
& \quad F_{g,n}^{0}(z_1,\ldots,z_n) \\ 
& = \big((\sigma p_d)^{s/r} s^{-1} r^{1 + s/r}\big)^{2g - 2 + n} \sum_{\substack{1 \leq a_1,\ldots,a_n \leq r \\ m_1,\ldots,m_n \geq 0}} r^{\sum_{i = 1}^n a_i/r} \prod_{i = 1}^n a_i^{-1}\phi_{m_i}^{a_i,[0]}(\tilde{z}_i) \\
& \quad \times \sum_{k \geq 0} \frac{(s^{-1} r^{s/r})^{k}}{k!} \sum_{\substack{r-s + 1 \leq b_1,\ldots,b_k \leq r - 1 \\ l_1,\ldots,l_k \geq 0}} r^{\sum_{c = 1}^{k} b_{c}/r} (\sigma p_d)^{\sum_{c = 1}^{k}  (s - r + b_{c})/r} q_s^{\sum_{c = 1}^{k} ( b_{c} - r)/s} \\
& \qquad\qquad\qquad\qquad\qquad\qquad\qquad\qquad\qquad \times \bigg(\int_{\overline{\mathcal{M}}_{g,n + k}} \Omega^{(r,s)}_{g;r - \boldsymbol{a},r - \boldsymbol{b}} \prod_{i = 1}^n (\psi_i/r)^{m_i}\bigg) \prod_{c = 1}^{k} \frac{\mathcal{Q}^{(l_{c})}_{s - r + b_{c}}}{(l_{c} + 1)!},
\end{split}
\end{equation}
where the functions $\phi_{m}^{a,[0]}$ are the ones defined in  \eqref{phihat0}, $\mathcal{Q}$s are taken from Lemma~\ref{QTev}, and
\begin{equation}
\label{tildezzz}
\tilde{z}_i = q_s^{1/s} (\sigma p_d)^{1/r} z_i.
\end{equation}

The case $s = 1$ was excluded in Section~\ref{sec:Deformation:one}. In this case, there is no deformation and we simply have the term $k = 0$ in the sum \eqref{Equation322}, that is
$$
F_{g,n}^{0}(z_1,\ldots,z_n) = \big((\sigma p_d)^{1/r} r^{1 + 1/r}\big)^{2g - 2 + n} \!\!\!\sum_{\substack{1 \leq a_1,\ldots,a_n \leq r \\ m_1,\ldots,m_n \geq 0}} \!\!\!\!\! r^{\sum_{i = 1}^n a_i/r} \prod_{i = 1}^{n} a_i^{-1} \phi_{m_i}^{a_i,[0]}(\tilde{z}_i) \int_{\overline{\mathcal{M}}_{g,n}} \!\!\! \Omega^{(r,1)}_{g;r-\boldsymbol{a}} \prod_{i = 1}^{n} (\psi_i/r)^{m_i},
$$
with $\tilde{z}_i = q_1 (\sigma p_r)^{1/r} z_i$.

 \vspace{0.2cm}

\subsection{The deformation $1$-form}

We compute
$$
\eta_t(z) = \big(\partial_t y_t(z)\big)\dd x(z) - \big(\partial_{t} x_t(z)\big)\dd y(z) = \sigma \tilde {P}( Q(z))\ Q'(z)\dd z
$$
and try to represent it as
$$
\eta_t(z) = - \mathop{{\rm Res}}_{w = \infty} \omega_{0,2}(z,w)\,f_t(w)
$$
We have $\tilde {P}(y) = p_{d - 1}y^{d - 1} + O(y^{d - 2})$ as $y \rightarrow \infty$ and we can find coefficients $(f_j)_{j = 1}^{r - 1}$ which are polynomials in $p_1,\ldots,p_{d - 1}$ and $q_1,\ldots,q_s$ such that
$$
f_t(z) = \sum_{j = 1}^{r-1} \frac{f_j}{j}\,z^j
$$
does the job. We only need the following information on $f_t(z)$.

\begin{lemma}
\label{Fll} We have for $l \geq 0$ as $z \rightarrow \infty$
\[
D_t^l f_t(z)\big|_{t = 0} = \sum_{j = 1}^{r - 1} \frac{\mathcal{F}_{j}^{(l)}\,z^{r - j}}{r - j} + O(1),
\]
where
\begin{equation}
\label{Fjlll}
\frac{\mathcal{F}_{j}^{(l)}}{(l + 1)!} =  \frac{(-1)^l\sigma (r - j)}{p_d^{l}}
 \sum_{\substack{(\theta,\gamma) \in \mathcal{P}_{s - 1} \times \mathcal{P}_{d - 1} \\ |\theta| + s|\gamma| = j + s \\ \ell(\gamma) = l + 1}} \frac{q_s^{d + 1 - |\gamma| - \ell(\theta)} \,(d - |\gamma|)!}{(d + 1 - |\gamma| - \ell(\theta))!}\,\cdot \, \bigg[ \frac{d + 1 - |\gamma|}{d}\bigg]_{l}\,\frac{\vec{p}_{d - \gamma}}{|\text{Aut}(\gamma)|} \cdot \frac{\vec{q}_{s - \theta}}{|\text{Aut}(\theta)|},
\end{equation}
with the convention $p_0 = q_0 = 1$. For $s = 1$, there is a huge simplification as the sum over $\theta$ is absent, and we have for $j \in \{1,\ldots,d - 1\}$
\begin{equation}
\label{Fjsimple}
\frac{\mathcal{F}_{j}^{(l)}}{(l + 1)!} = \frac{(-1)^{l} \sigma}{p_d^{l}} \sum_{\substack{\gamma \in \mathcal{P}_{d - 1} \\ |\gamma| = j + 1 \\ \ell(\gamma) = l + 1}} q_1^{d + 1 - |\gamma|} \cdot \bigg[\frac{d + 1 - |\gamma|}{d}\bigg]_{l} \cdot \frac{\vec{p}_{d - \gamma}}{|\text{Aut}(\gamma)|}.
\end{equation}
\end{lemma}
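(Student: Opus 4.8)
The plan is to reduce the computation of $D_t^l f_t|_{t=0}$ to a Lagrange--Bürmann inversion and then to organise the result through two nested partition sums. The first observation I would record is that the one-form $\eta_t(z) = \sigma\tilde{P}(Q(z))Q'(z)\,\dd z$ is independent of $t$, so that the representing function $f_t(z) = f(z) := \sigma\Phi(Q(z))$ — where $\Phi(y)=\sum_{j=1}^{d-1}\frac{p_j}{j+1}y^{j+1}$ is the antiderivative of $\tilde{P}$ vanishing at the origin — is itself $t$-independent. Since $D_t$ differentiates at fixed $x_t(z)$, it acts on $f$ solely through the reparametrisation $z\mapsto z_t(x)$ inverting $x_t$; writing $g_0=f$ and $g_{k+1}=D_tg_k$, an induction on the definition of $D_t$ gives $g_k(t,z_t(x))=\partial_t^k\big(f(z_t(x))\big)$, so that
\begin{equation*}
\sum_{l\geq 0}\frac{\epsilon^l}{l!}\,D_t^l f_t(z)\Big|_{t=0}=f\big(Z(z,\epsilon)\big),
\end{equation*}
where $Z=Z(z,\epsilon)$, with $Z(z,0)=z$, solves the implicit equation $x_\epsilon(Z)=x_0(z)$, namely
\begin{equation*}
\ln Z-\sigma p_d\,Q(Z)^d-\sigma\epsilon\,\tilde{P}(Q(Z))=\ln z-\sigma p_d\,Q(z)^d.
\end{equation*}
Hence $D_t^lf_t|_{t=0}=l!\,[\epsilon^l]\,f(Z(z,\epsilon))$, and for $1\leq j\leq r-1$ the coefficient $\mathcal{F}_j^{(l)}/(r-j)$ is read off as the coefficient of $z^{r-j}$ in this quantity.

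I would treat the case $s=1$ first, as it carries the essential analytic content. There $Q(z)=q_1z$, so the substitution $\zeta=Q(Z)=q_1Z$ turns the implicit equation, up to the harmless constant $\ln q_1$, into the Lagrange--Bürmann problem for the logarithmic curve $x=\ln\zeta-\sigma p_d\zeta^d$ perturbed by $-\sigma\epsilon\tilde{P}(\zeta)$. Expanding $f(Z)=\sigma\Phi(\zeta)$ in $\epsilon$ by Fa\`a di Bruno, the $l$-th derivative distributes one monomial of $\tilde{P}$ over each of the $l$ insertions coming from the $\epsilon$-expansion of $\zeta$, together with the single monomial of $\Phi$ from $f$; the choice of these $l+1$ monomials produces the sum over $\gamma\in\mathcal{P}_{d-1}$ of length $l+1$ with weight $\vec{p}_{d-\gamma}/|\text{Aut}(\gamma)|$, while the inversion of the logarithmic curve contributes, exactly as in the prototype \eqref{expfun}, the rising factorial $\big[\tfrac{d+1-|\gamma|}{d}\big]_l$ and the scalar $(-1)^l\sigma p_d^{-l}$. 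Reading off the coefficient of $z^{d-j}$ and matching $|\gamma|=j+1$ yields \eqref{Fjsimple}. This is the direct analogue of Lemma~\ref{QTev}, obtained by adapting \cite[Lemma 5.2]{BDKLM} with the inner and outer polynomials interchanged.

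For general $s$ the variable $\zeta=Q(Z)$ is no longer proportional to $Z$, so after carrying out the $\gamma$-expansion I am left, for each $\gamma$, with a power of $Q(z)$ to be expanded at $z\to\infty$. Writing $Q(z)=q_sz^s\big(1+\tilde{Q}(z)/(q_sz^s)\big)$ and expanding multinomially, the selection of $\ell(\theta)$ sub-leading factors generates the sum over $\theta\in\mathcal{P}_{s-1}$ with weight $\vec{q}_{s-\theta}/|\text{Aut}(\theta)|$, the power $q_s^{\,d+1-|\gamma|-\ell(\theta)}$, and the factorial ratio $(d-|\gamma|)!/(d+1-|\gamma|-\ell(\theta))!$ coming from the falling factorial of the exponent; tracking the total $z$-degree across both layers pins the coefficient of $z^{r-j}$ to the constraint $|\theta|+s|\gamma|=j+s$, which is \eqref{Fjlll}. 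Setting $\theta=\varnothing$ recovers \eqref{Fjsimple}, as it must.

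The main obstacle I anticipate is the two-layer combinatorial bookkeeping rather than any single analytic estimate. Concretely, one must run the Fa\`a di Bruno/Lagrange--B\"urmann expansion of the implicitly defined $Z(z,\epsilon)$ so that the logarithm produces precisely the $l$-factor rising factorial, keep scrupulous track of the difference between $\partial_t$ at fixed $z$ and $D_t$ at fixed $x_t$, and finally verify that the normalisation $1/(l+1)!$, the factorial ratio, and the powers of $\sigma$, $p_d$ and $q_s$ assemble exactly into \eqref{Fjlll}. The decoupling of the $P$-combinatorics (the partition $\gamma$) from the $Q$-combinatorics (the partition $\theta$) is what makes this tractable, and it is the step I would set up most carefully.
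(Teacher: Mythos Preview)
Your overall strategy matches the paper's: both identify that $f_t(z)=\sigma\Phi(Q(z))$ is $t$-independent, pass to the generating series $\sum_{l\geq 0}\frac{\epsilon^l}{l!}D_t^lf_t|_{t=0}=f\big(Z(z,\epsilon)\big)$ with $x_\epsilon(Z)=x_0(z)$, and organise the answer as a double partition sum over $\gamma$ (from $\tilde P$) and $\theta$ (from $\tilde Q$). So the architecture is correct.

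What you have not put your finger on is the one technical step that makes the computation close. The implicit equation for $Z$ contains $\ln Z$, and a direct Fa\`a di Bruno expansion of $Z(z,\epsilon)$ through that logarithm does \emph{not} produce the rising factorial in any transparent way; your appeal to ``the inversion of the logarithmic curve\dots\ exactly as in the prototype \eqref{expfun}'' is not the right mechanism, since \eqref{expfun} concerns the $z\to 0$ expansion of the basis functions, not the $z\to\infty$ inversion you need here. The paper's device is to pass to the variable $\zeta_t=Q(z_t)^{1/s}$ and observe that, to compute the coefficients of $z^{r-j}$ for $1\leq j\leq r-1$, it suffices to know $\zeta$ as a series in $\zeta_t$ up to $O(\zeta_t^{-r})$, and \emph{to that order the logarithm drops out entirely}. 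One is then left with the purely algebraic relation
\[
\zeta=\zeta_t\Big(1+\sum_{j=1}^{d-1}\frac{\epsilon\, p_j}{p_d\,\zeta_t^{(d-j)s}}+O(\zeta_t^{-(r+1)})\Big)^{1/r},
\]
whose generalized binomial expansion is the genuine source of the rising factorial $\big[\tfrac{d+1-|\gamma|}{d}\big]_l$ and of the sign $(-1)^l/p_d^l$. The $\theta$-sum and the factorial ratio $(d-|\gamma|)!/(d+1-|\gamma|-\ell(\theta))!$ then enter separately, from the multinomial expansion of $z^{-(r-j)}$ in powers of $\zeta=Q(z)^{1/s}$, exactly as you describe. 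So your two layers are right, but the tractability of the inner layer rests on the ``drop the logarithm'' truncation, which you should state and justify explicitly rather than fold into a Lagrange--B\"urmann slogan.
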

The conditions on the sum \eqref{Fjlll} force $\gamma$ to have positive length, hence to be non-empty, whereas $\theta$ is allowed to be empty. In particular, in the $s = 1$ case, $\theta = \emptyset$ is the only contribution, and then $\gamma$ is forced to have length $j + 1 \geq 2$. The constraints on the sizes of $\gamma$ and $\theta$ impose that only nonnegative powers for $q_s$ appear in \eqref{Fjlll}, and the factorial in the denominator is then non-zero.

\medskip

Then, we would like to compute
\begin{equation}
\label{Tamal}
T_{m}^{a,(l)} =  \mathop{{\rm Res}}_{z = \infty} a^{-1} \phi_{m}^{a,[0]}(\tilde{z})\dd\big(D_t^{l} f_t(z)\big)\big|_{t = 0}.
\end{equation}
Starting from Lemma~\ref{Fll}, evaluating $T_m^{a,(l)}$ follows the same steps as \cite[Lemma 5.3]{BDKLM} and we only state the result.
\begin{corollary}
\label{Tlemma22} For $a \in \{1,\ldots,r\}$, $m,l \geq 0$, we have
\[
T_{m}^{a,(l)} = \delta_{m,0} \cdot \left\{\begin{array}{lll} r^{-1}\,q_{s}^{a/s - d} (\sigma p_d)^{a/r - 1}\,\mathcal{F}_{a}^{(l)} & & {\rm if}\,\,a \in \{1,\ldots,r - 1\} \\[2pt] 0 & & {\rm if}\,\,a = r \end{array}\right.\,.
\]
\end{corollary}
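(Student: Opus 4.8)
The plan is to evaluate the residue \eqref{Tamal} by expanding both factors as Laurent series at $\tilde{z} = \infty$ and reading off the coefficient of $\tilde{z}^{-1}$, following verbatim the strategy of \cite[Lemma 5.3]{BDKLM} with $\mathcal{F}$ in place of $\mathcal{Q}$. Since residues are invariant under the rescaling \eqref{tildezzz}, I would first rewrite
\[
T_m^{a,(l)} = \Res_{\tilde{z} = \infty} \big(a^{-1}\phi_m^{a,[0]}(\tilde{z})\big)\,\dd\big(D_t^l f_t\big)\big|_{t=0},
\]
now viewing $D_t^l f_t$ as a function of $\tilde{z}$ through $z = q_s^{-1/s}(\sigma p_d)^{-1/r}\tilde{z}$. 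Everything then reduces to controlling the two Laurent expansions.

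For the first factor, recall from \eqref{phihat0} that $a^{-1}\phi_m^{a,[0]}(\tilde{z}) = a^{-1}\partial_{x_{[0]}}^{m+1}(\tilde{z}^a)$ and that $\partial_{x_{[0]}} = \tfrac{\tilde{z}}{1 - r\tilde{z}^r}\partial_{\tilde{z}}$. The key observation is that $\partial_{x_{[0]}}$ sends a monomial $\tilde{z}^b$ to a Laurent series at $\infty$ whose exponents are all $\equiv b \pmod r$, with top power $b - r$ and leading coefficient $-b/r$. Iterating, $a^{-1}\phi_m^{a,[0]}(\tilde{z})$ is supported on exponents $\equiv a \pmod r$, all $\leq a - (m+1)r$, with leading coefficient $a^{-1}(-r)^{-(m+1)}\prod_{i=0}^{m}(a - ir)$; in particular every exponent occurring is $\leq a - (m+1)r \leq 0$.

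For the second factor I would feed in Lemma~\ref{Fll} and convert via \eqref{tildezzz}: the polynomial part of $\dd(D_t^l f_t)|_{t=0}$ contributes the monomials $\mathcal{F}_j^{(l)} q_s^{-(r-j)/s}(\sigma p_d)^{-(r-j)/r}\,\tilde{z}^{r-j-1}\dd\tilde{z}$ for $j \in \{1,\dots,r-1\}$, whose exponents range over $\{0,\dots,r-2\}$. The remaining $O(1)$ term is harmless: since $D_t^l f_t|_{t=0}$ is obtained by applying the rational operator $D_t = \partial_t - (\partial_t x_t/\partial_z x_t)\partial_z$ finitely many times to the polynomial $f_t$, it is a \emph{rational} function of $z$, so no logarithm can appear and the differential of its $O(1)$ part is supported on exponents $\leq -2$.

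Extracting the residue amounts to reading off the coefficient of $\tilde{z}^{-1}$ in the product. The remainder cannot contribute: pairing an exponent $\leq -2$ against the expansion of $a^{-1}\phi_m^{a,[0]}$ would require a factor of degree $\geq 1$ from the latter, which is impossible since all its exponents are $\leq 0$. For the polynomial part, matching exponents modulo $r$ forces $r-j-1 \equiv r-1-a$, hence $j = a$ and $a \neq r$, while matching the exact exponents forces $m = 0$ and selects precisely the leading term of $\phi$. Assembling the leading coefficient $-1/r$ (the $m=0$ value of $a^{-1}(-r)^{-(m+1)}\prod_i(a-ir)$) with the $j=a$ coefficient of $\dd(D_t^l f_t)$, and using $r = ds$ so that $-(r-a)/s = a/s - d$ and $-(r-a)/r = a/r - 1$, yields the stated value $r^{-1} q_s^{a/s - d}(\sigma p_d)^{a/r - 1}\mathcal{F}_a^{(l)}$ when $a \in \{1,\dots,r-1\}$ and $0$ when $a = r$. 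The only genuine work is the bookkeeping of the $q_s,\sigma,p_d$ prefactors and the verification that $D_t^l f_t$ carries no logarithm; both are exactly as in \cite{BDKLM}, which is why the result is stated as a corollary.
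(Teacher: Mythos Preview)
Your proposal is correct and follows precisely the approach the paper indicates: the paper simply says that ``evaluating $T_m^{a,(l)}$ follows the same steps as \cite[Lemma~5.3]{BDKLM}'' and omits the details, while you have written those steps out explicitly, adapted to the present setting with $\mathcal{F}$ in place of $\mathcal{Q}$. The Laurent-series bookkeeping (mod-$r$ exponent matching for $\phi_m^{a,[0]}$, degree count forcing $m=0$, rationality of $D_t^l f_t$ ruling out logarithmic terms, and the prefactor conversion via \eqref{tildezzz}) is exactly the argument of \cite[Lemma~5.3]{BDKLM}, so there is nothing to add.
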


\begin{proof}[Proof of Lemma~\ref{Fll}]
There exists a unique formal series $z_t = z + \mathbb{K}[[z^{-1}]][[t]]$ satisfying $x_0(z) = x_t(z_t)$, that is
\begin{equation}
\label{lncarac} \ln z_t - \sigma \bigg(p_d \big(Q(z_t)\big)^{d} + t\sum_{j = 1}^{d - 1}p_j  \big(Q(z_t)\big)^{j}\bigg) = \ln z - \sigma p_d \big(Q(z)\big)^{d}.
\end{equation}
We need to compute the generating series
\[
f(z,t) := \sum_{l \geq 0} \big(D_u^{l}f_u(z)\big)\big|_{u = 0}\,\frac{t^{l}}{l!} \,\,\mathop{=}_{z \rightarrow \infty} \,\,\sum_{j = 1}^{r - 1} \frac{\mathcal{F}_j(t)}{r - j}\,z^{r - j} + O(1),
\]
which is such that
\[
\dd_z f(z,t) = \sigma \tilde {P}(Q(z_t)) \dd\big(Q(z_t)\big).
\]
Therefore, we have for $j \in \{1,\ldots,r - 1 \}$
\[
\mathcal{F}_{j}(t)  = - \mathop{{\rm Res}}_{z = \infty} f(z,t)\,(r - j) z^{-(r - j + 1)}\,\dd z = \mathop{{\rm Res}}_{z = \infty} \dd_{z}\big(f(z,t)\big) z^{-(r - j)} = -\mathop{{\rm Res}}_{z = \infty} \sigma \tilde {P}( Q(z_t)) \dd Q(z_t)\,z^{-(r - j)}.
\]
Using the change of variable $\zeta_t =  \big(Q(z_t)\big)^{1/s}$, we get
\[
\mathcal{F}_{j}(t) = - \sum_{c = 1}^{d - 1} \sigma p_{c}\mathop{{\rm Res}}_{\zeta_t = \infty} \zeta_t^{sc}\dd(\zeta_t^{s})\,z^{-(r - j)} \\ = \sum_{c = 1}^{d - 1} \sigma p_c s\,\big[\zeta_t^{-s(c + 1)}\big]\, z^{-(r - j)}.
\]
We first write the series expansion
\[
z^{-(r - j)}  \mathop{\approx}_{\zeta \rightarrow \infty}\,\, \sum_{k \geq 0} A_{j,k}\,\zeta^{-(r - j + k)},
\]
where $\zeta = \big(Q(z)\big)^{1/s}$, and compute its coefficients
\begin{equation}
\label{Ajk} \begin{split} 
A_{j,k} & = -\mathop{{\rm Res}}_{\zeta = \infty} z^{-(r - j)}\zeta^{r - j + k - 1}\dd u  = - \frac{r - j}{r - j + k} \mathop{{\rm Res}}_{z = \infty} z^{-(r - j + 1)} \zeta^{r - j + k}\,\dd z \\
& = \frac{r - j}{r - j + k}\,\big[z^{r - j}\big]\,\zeta^{r - j + k} = \frac{r - j}{r - j + k}\, \big[z^{r - j}\big]\,\big( Q(z)\big)^{(r - j + k)/s} \\
& = \frac{(r - j)q_s^{(r - j + k)/s}}{r - j + k}\,\big[z^{-k}\big] \left(1 + \sum_{a = 1}^{s - 1} \frac{q_a}{q_s} z^{a - s}\right)^{(r - j + k)/s} \\ 
& =  \frac{r - j}{r - j + k} \sum_{\substack{\theta \in \mathcal{P}_{s - 1} \\ |\theta| = k}} q_s^{(r - j  + |\theta| - s \ell(\theta))/s}\,\frac{\big(\frac{r - j + k}{s}\big)!}{\big(\frac{r - j + k}{s} - \ell(\theta)\big)!}\,\frac{\vec{q}_{s - \theta}}{|\text{Aut}(\theta)|}.
\end{split}  
\end{equation} 
Then, we have to compute
\begin{equation}
\label{tildeFjt}
\mathcal{F}_{j}(t) = \sum_{c = 1}^{d - 1} \sum_{k \geq 0} \sigma p_{c} s \cdot A_{j,k} \cdot \big[\zeta_t^{-s(c + 1)}\big]\,\zeta^{-(r  - j + k)}.
\end{equation}
Imagine we knew the power series expansion of $\zeta$ in terms of $\zeta_t$ up to $O(\zeta_t^{-\alpha})$ as $\zeta_t \rightarrow \infty$, for some $\alpha$. Then, we would know the series expansion of  $\zeta^{-(r - j + k)}$ up to $O(\zeta_t^{-(r - j + k + \alpha + 1)})$. To compute \eqref{tildeFjt} for a fixed $j \in \{1,\ldots,r - 1\}$, we therefore need $r - j + k + \alpha + 1 > (c + 1)s$ for all $c \in \{1,\ldots,d - 1\}$ and $k \geq 0$.  This request would be fulfilled with $\alpha \geq j$. Notice that ignoring the logarithm in the characterisation \eqref{lncarac} gives $u$ (and thus $z$) as a series in $\zeta_t$ up to $O(\zeta_t^{-r})$, namely
\[
\zeta = \zeta_t \left(1 + \sum_{j = 1}^{d - 1} \frac{tp_j}{p_d \zeta_t^{(d - j)s}} + O(\zeta_t^{-(r + 1)})\right)^{1/r}.
\]
This truncated characterisation is therefore sufficient to compute all $\mathcal{F}_{j}(t)$ for $j \in \{1,\ldots,r - 1\}$. Let us write for $k \geq 1$
\begin{equation}
\zeta^{-k} \mathop{\approx}_{\zeta_t \rightarrow \infty}\,\, \sum_{m \geq 0} C_{k,m}(t)\,\zeta_t^{-(k + ms)},
\end{equation}
where for $m \leq d$
\begin{equation}
\label{Ckmt} \begin{split}
C_{k,m}(t) & = -\mathop{{\rm Res}}_{\zeta_t = \infty} \zeta^{-k}\,\zeta_t^{k + ms - 1} \dd \zeta_t =  \big[\zeta_t^{-ms}\big] \left(1 + \sum_{j = 1}^{d - 1} \frac{tp_j}{p_{d}\zeta_t^{(d - j)s}}\right)^{-k/r} \\
& = \sum_{\substack{\beta \in \mathcal{P}_{d - 1} \\ |\beta| = m}} \frac{(-1)^{\ell(\beta)} t^{\ell(\beta)}}{p_d^{\ell(\beta)}}\cdot \bigg[\frac{k}{r}\bigg]_{\ell(\beta)} \cdot \frac{\vec{p}_{d - \beta}}{|\text{Aut}(\beta)|},
\end{split}
\end{equation} 
We observe that there are $m$ are integers in the series \eqref{Ckmt}. Therefore:
$$
\mathcal{F}_{j}(t) = \sum_{c = 1}^{d - 1} \sum_{k \in \mathbb{Z}} A_{j,j + ks} \cdot C_{(d + k)s,c + 1 - d - k}(t).
$$
We then substitute the value of $A$ from \eqref{Ajk} and $C(t)$ from \eqref{Ckmt}, and extract from the latter the coefficient of $t^{l}/l!$ and divide by $(l + 1)!$. This yields
\begin{equation*}
\begin{split} 
\frac{\mathcal{F}_{j}^{(l)}}{(l + 1)!} & = \sum_{c = 1}^{d - 1} \sigma p_c s\,\frac{r - j}{r + ks} \sum_{\substack{\theta \in \mathcal{P}_{s - 1} \\ |\theta| = j + ks \\ k \in \mathbb{Z}}} q_s^{d + k - \ell(\theta)} \frac{(d + k)!}{(d + k - \ell(\theta))!}\,\frac{\vec{q}_{s - \theta}}{|\text{Aut}(\theta)|} \\ 
& \quad \qquad\qquad \qquad \times \sum_{\substack{\beta \in \mathcal{P}_{d - 1} \\ |\beta| = c + 1 - (d + k) \\ \ell(\beta) = l}} \frac{(-1)^{l} p_d^{-l}}{l + 1} \cdot \bigg[1 + \frac{k}{d}\bigg]_{l} \cdot \frac{\vec{p}_{d - \beta}}{|\text{Aut}(\beta)|}.
\end{split}
\end{equation*}
In this sum, we can absorb the extra factor $p_c$ by defining a new partition $\gamma$ obtained by adding to $\beta$ a part $d - c$. Then $|\gamma| = |\beta| + d - c = 1 - k$ and $d + k = d + 1 - |\gamma|$. Then $\frac{s}{r + ks} = \frac{1}{d + k}$ can be absorbed in the factorial $(d + k)!$, turning it into $(d + k - 1)! = (d - |\gamma|)!$. Since $\ell(\gamma) = l + 1$, we remark that
$$
\sum_{c} \sum_{\beta} \frac{1}{l + 1}\,\frac{p_c \cdot \vec{p}_{d - \beta}}{|\text{Aut}(\beta)|} \cdots = \sum_{\gamma} \frac{\vec{p}_{d - \gamma}}{|\text{Aut}(\gamma)|} \cdots 
$$
Besides, we see that $|\theta| = j + ks = j + (1 - |\gamma|)s$, thus $|\theta| + s|\gamma| = j + s$. These handlings yield the claimed formula \eqref{Fjlll}. For $s = 1$ the sum over $\theta$ is absent, we have $|\gamma| = j + 1$  and $r = d$, and the  simplification
$$
(r - j) \cdot \frac{(d - |\gamma|)!}{(d + 1 - |\gamma|)!} \cdot \bigg[\frac{d + 1 - |\gamma|}{d}\bigg]_{l} = (d - j) \cdot \frac{1}{d - j} \cdot \bigg[1 - \frac{j}{d}\bigg]_{l} = \bigg[1 - \frac{j}{d}\bigg]_{l}
$$
leads to the claimed \eqref{Fjsimple}. 
\end{proof}

\vspace{0.2cm}

\subsection{The Taylor series}

We have all the ingredients to evaluate Taylor series \eqref{Taylorser}:
\begin{itemize}
\item The decomposing of the free energies $F_{g,n + k}^{0}$ on the basis functions  $a^{-1}\phi_{m}^{a,[0]}$ is given in \eqref{Equation322} in terms of intersection indices of the classes $\Omega^{(ds,s)}$;
\item The residue pairing of the basis functions with $\dd(D_t^{l} f_t)\big|_{t = 0}$ was found in Corollary~\ref{Tlemma22}.
\end{itemize}
This leads to:
\begin{equation}
\begin{split} 
& \quad F_{g,n}^{1}(z_1',\ldots,z'_n) \\ 
& = \big((\sigma p_d)^{s/r} s^{-1} r^{1 + s/r}\big)^{2g - 2 + n} \sum_{\substack{1 \leq a_1,\ldots,a_n \leq r \\ m_1,\ldots,m_n \geq 0}} r^{\sum_{i = 1}^{n} a_i/r} \prod_{i = 1}^n a_i^{-1}\phi_{m_i}^{a_i,[0]}(\tilde{z}_i) \\
& \quad \times \sum_{k \geq 0} \frac{(s^{-1}r^{s/r})^{k}}{k!} \sum_{\substack{r - s + 1 \leq b_1,\ldots,b_k \leq r - 1 \\ l_1,\ldots,l_k \geq 0}} r^{\sum_{c = 1}^{k} b_{c}/r} (\sigma p_d)^{\sum_{c = 1}^{k} (s - r + b_{c})/r} q_s^{\sum_{c = 1}^{k} (b_{c} - r)/s} \\ 
& \quad \times \sum_{h \geq 0} \frac{(s^{-1}r^{s/r})^{h}}{h!} \sum_{\substack{1 \leq j_1,\ldots,j_{h} \leq r - 1 \\ o_1,\ldots,o_{h} \geq 0}} r^{\sum_{c = 1}^{h} j_{c}/r} (\sigma p_d)^{\sum_{c = 1}^{h} (s - r + j_{c})/r} q_s^{\sum_{c = 1}^{h} (j_{c} - r)/s} \\  
&  \qquad\qquad \qquad \qquad  \times \bigg(\int_{\overline{\mathcal{M}}_{g,n + k + h}} \Omega^{(r,s)}_{g;r - \boldsymbol{a},r - \boldsymbol{b},r - \boldsymbol{j}} \prod_{i = 1}^n (\psi_i/r)^{m_i}\bigg)   \prod_{c = 1}^{k} \frac{\mathcal{Q}_{s - r + b_{c}}^{(l_{c})}}{(l_{c} + 1)!} \prod_{c = 1}^{h} \frac{\mathcal{F}_{j_{c}}^{(o_{c})}}{(o_{c} + 1)!},
\end{split}  
\end{equation}
with $x_1(z'_i) = x_0(z_i)$. Taking into account the series expansion \eqref{expfun} for the basis functions and $e^{x_1(z'_i)} = e^{x_0(z_i)} = q_s^{-1/s}(\sigma p_d)^{-1/ds}e^{x_{[0]}\!(\tilde{z}_i)}$, we deduce the all-order series expansion as $z_i' \rightarrow 0$
\[
F_{g,n}^{1}(z_1',\ldots,z'_n) \approx \sum_{\mu_1,\ldots,\mu_n > 0} H_{g,n}^{\text{2nd}}(\mu_1,\ldots,\mu_n) \prod_{i = 1}^n e^{\mu_i x_1(z_i')},
\]
with
\begin{equation}
\label{Hgn2nd} \begin{split}
H_{g,n}^{\text{2nd}}(\mu) & = \big((\sigma p_d)^{s/r} s^{-1}r^{1 + s/r}\big)^{2g - 2 + n} \prod_{i = 1}^n \frac{(\mu_i/r)^{\lfloor \mu_i/r \rfloor}}{\lfloor \mu_i/r\rfloor !} \sum_{k,h \geq 0} \,\,\sum_{\substack{r - s + 1 \leq b_1,\ldots,b_k \leq r - 1 \\ l_1,\ldots,l_k \geq 0}}\,\, \sum_{\substack{1 \leq j_1,\ldots,j_{h} \leq r - 1 \\ o_1,\ldots,o_{h} \geq 0}}   \\
& \quad \times \frac{s^{-(k + h)}}{k!h!}\,r^{\sum_{i = 1}^n \mu_i/r + \sum_{c = 1}^{k} (b_{c} + s)/r + \sum_{c = 1}^{h} (j_{c} + s)/r} \\
& \quad \times (\sigma p_d)^{\sum_{i = 1}^n \mu_i/r + \sum_{c = 1}^{k} (s - r + b_{c})/r + \sum_{c = 1}^{h} (s - r + j_{c})/r} q_s^{\sum_{i = 1}^n \mu_i/s + \sum_{c = 1}^{k} (b_{c} - r)/s + \sum_{c = 1}^{h} (j_{c} - r)/s} \\ 
& \quad \times \bigg(\int_{\overline{\mathcal{M}}_{g,n + k + h}} \frac{\Omega^{(r,s)}_{g;-\overline{\mu},r - \boldsymbol{b},r - \boldsymbol{j}}}{\prod_{i = 1}^n \left(1 - \frac{\mu_i}{r} \psi_i\right)}\bigg)  \prod_{c = 1}^{k} \frac{\mathcal{Q}_{s - r + b_c}^{(l_{c})}}{(l_{c} + 1)!}\,\prod_{c = 1}^{h} \frac{\mathcal{F}_{j_{c}}^{(o_{c})}}{(o_c + 1)!}. 
\end{split}
\end{equation}
The $\mathcal{Q}$-factors contain monomials in the $q$-variables (Lemma~\ref{QTev}), while the $\mathcal{F}$-factors contain monomials in the $p$- and $q$-variables (Lemma~\ref{Fll}). The former get replaced by a sum over $k$-tuple of non-empty $(s - 1)$-partitions $\rho^{(1)},\ldots,\rho^{(k)}$ such that $|\rho^{(c)}| = s - r + b_c \in \{1,\ldots,s-1\}$, that is $\boldsymbol{\rho} \in \mathcal{P}\!\!\mathcal{P}_{s - 1}^k$. The latter get replaced by a sum over a $h$-tuple of  $(s - 1)$-partitions $(\theta^{(1)},\ldots,\theta^{(h)})$ and a $h$-tuple of non-empty $(d - 1)$-partitions $(\gamma^{(1)},\ldots,\gamma^{(h)})$ such that $|\theta^{(c)}| + s |\gamma^{(c)}| = j_c + s$. The range of $j_c$ in \eqref{Hgn2nd} is enforced by the condition 
$$
|\theta^{(c)}| + s|\gamma^{(c)}|  \in \{s +1,\ldots,s + r - 1\}.
$$
\begin{definition} \label{thetrho} We call $\mathcal{P}_{s -1,d -1}^{+r}$ the set of ordered pairs $(\theta,\gamma)$ such that $\theta$ is a $(s - 1)$-partition, $\gamma$ is a non-empty $(d - 1)$-partition and $s + 1 \leq |\theta| + s|\gamma| \leq s + r - 1$.
\end{definition}

Then, the powers of $p_1,\ldots,p_{d - 1}$ and $q_{1},\ldots,q_{s - 1}$ recombine into
\begin{equation}
\label{qpmono} \vec{q}_{\lambda} \cdot \vec{p}_{\pi}
\end{equation}
with
\begin{equation}
\label{concatcun}
\sqcup \boldsymbol{\gamma} = d - \pi \qquad \text{and} \qquad \sqcup \boldsymbol{\rho} \sqcup \boldsymbol{\theta} = s - \lambda.
\end{equation}
We should not forget the natural combinatorial factor
$$
\frac{1}{k!h!},
$$
corresponding to permutations among the $k$-tuple $\boldsymbol{\rho}$ and the ordered pair of $h$-tuples $(\boldsymbol{\theta},\boldsymbol{\gamma})$. 

\medskip

We also have the factors $p_d^{\deg(p_d)}q_s^{\deg(q_s)}$ with (possibly negative) powers
\begin{equation}
\label{dpdq}
\begin{split} 
\deg(p_d) & =  \frac{(2g - 2 + n)s}{r} + \frac{|\mu|}{r} + \sum_{c = 1}^{k} \frac{|\rho^{(c)}|}{r} + \sum_{c = 1}^{h} \bigg( \frac{|\theta^{(c)}| + s |\gamma^{(c)}|}{r} - \ell(\gamma^{(c)})\bigg), \\  
\deg(q_s) & = \frac{|\mu|}{s} + \sum_{c = 1}^{k} \bigg(\frac{|\rho^{(c)}|}{s} - \ell(\rho^{(c)})\bigg) + \sum_{c = 1}^{h} \bigg( \frac{|\theta^{(c)}|}{s} - \ell(\theta^{(c)})\bigg).
\end{split}
\end{equation}
Due to \eqref{concatcun}, we have
\begin{equation*}
\begin{split}
|\pi| & = \sum_{c = 1}^{k} |d - \gamma^{(c)}|  = \sum_{c = 1}^{h} \big(d\ell(\gamma^{(c)}) - |\gamma^{(c)}|\big) \\
|\lambda| & = \sum_{c = 1}^{k} |s - \rho^{(c)}| + \sum_{c = 1}^{h} |s - \theta^{(c)}| = \sum_{c = 1}^{k} \big(s\ell(\rho^{(c)}) - |\rho^{(c)}|\big) + \sum_{c = 1}^{h} \big(s \ell(\theta^{(c)}) - |\theta^{(c)}|\big).
\end{split}
\end{equation*} 
Using as well $r = ds$, we can then rewrite \eqref{dpdq} as
\begin{equation}
\label{dpdqfinal}
d \cdot \deg(p_d) = 2g - 2 + n + \frac{|\mu| + |s - \lambda|}{s} - |\pi| \qquad \text{and}\qquad  \deg(q_s) = \frac{|\mu| - |\lambda|}{s}.
\end{equation}
We may also absorb the powers of $p_d$ and $q_s$ by considering the extended $s$-partition $\overline{\lambda}$ obtained from $\lambda$ by adding $\deg(q_s) \in \mathbb{Z}$ parts $s$, and the extended $d$-partition $\overline{\pi}$, obtained from $\pi$ by adding $\deg(p_d) \in \mathbb{Z}$ parts $d$. From \eqref{dpdqfinal}, it follows that their respective sizes are
$$
|\overline{\lambda}| = |\mu| \qquad \text{and} \qquad |\overline{\pi}| = 2g - 2 + n + \frac{|\mu| + |s - \lambda|}{s}.
$$

\medskip

Let us now turn to the other factors. First, we have an $\Omega$-integral of the form
$$
\int_{\overline{\mathcal{M}}_{g,n + k + h}} \frac{\Omega^{(r,s)}_{g;-\overline{\mu},s-|\boldsymbol{\rho}|,s(d + 1 - |\boldsymbol{\gamma}|) - |\boldsymbol{\theta}|}}{\prod_{i = 1}^n \left(1 - \frac{\mu_i}{r} \psi_i\right)},
$$
where we recall that $n = \ell(\mu)$. Second, from $\mathcal{F}$s and $\mathcal{Q}$s we have a minus sign to the power
$$
\deg(-1) = \sum_{c = 1}^{k} \big(\ell(\rho^{(c)}) - 1\big) + \sum_{c = 1}^{h} \big(\ell(\gamma^{(c)}) - 1\big) = \ell(\pi) - h + \sum_{c = 1}^{k} \big(\ell(\rho^{(c)}) - 1\big).
$$ 
Third, the $\mathcal{F}$s and the first and third line of \eqref{Hgn2nd} give rise to a factor  $\sigma^{\deg(\sigma)}$ with
\begin{equation*}
\begin{split}
\deg(\sigma) & = \frac{s}{r}(2g - 2 + n) + \frac{|\mu|}{r} + \sum_{c = 1}^{k} \frac{|\rho^{(c)}|}{r} + \sum_{c = 1}^{h} \frac{|\theta^{(c)}| + s|\gamma^{(c)}|}{r} \\
& = \frac{2g - 2 + n + |d - \pi|}{d} + \frac{|\mu| + |s - \lambda|}{r} = \frac{|d - \pi| + |\overline{\pi}|}{d},
\end{split}
\end{equation*}
using again \eqref{concatcun}. Third, we bring $s^{-h}$ in the second line of \eqref{Hgn2nd} as an extra $s^{-1}$ into each of the factor $\mathcal{F}_{j_c}^{(o_c)}$ for $c \in \{1,\ldots,h\}$, and more precisely into its factor $r - j_c$, in order to get a factor
\begin{equation}
\label{rdiffjc}
\prod_{c = 1}^{h} \frac{r - j_c}{s} = \prod_{c = 1}^{h} \bigg(d + 1 - |\gamma^{(c)}| - \frac{|\theta^{(c)}|}{s}\bigg).
\end{equation}
Fourth, the factorials and symmetry factors in $\mathcal{Q}$s and $\mathcal{F}$s yield a factor
$$
\prod_{c = 1}^{k} \frac{\big[\frac{s - |\rho^{(c)}|}{r}\big]_{\ell(\rho^{(c)}) - 1}}{|\text{Aut}(\rho^{(c)})|} \,\,\cdot   \prod_{c = 1}^{h} \frac{(d - |\gamma^{(c)}|)!}{(d + 1 - |\gamma^{(c)}| - \ell(\theta^{(c)}))!} \cdot \frac{\big[\frac{d + 1 - |\gamma^{(c)}|}{d}\big]_{\ell(\gamma^{(c)}) - 1}}{|\text{Aut}(\gamma^{(c)})| \cdot |\text{Aut}(\theta^{(c)})|}.
$$
Fifth,  taking into account the factor already absorbed in the previous step, we have a remaining factor of $s$ to the power
$$
\deg(s) = -(2g - 2 + n + k).
$$
Sixth, on top of the combinatorial factor
$$
\prod_{i = 1}^{n} \frac{(\mu_i/r)^{\lfloor \mu_i/r\rfloor}}{\lfloor \mu_i/r \rfloor !},
$$
we also get an extra factor of $r$ to the power
\begin{equation*}
\begin{split}
\deg(r) & = \bigg(1 + \frac{s}{r}\bigg)(2g - 2 + n) + \sum_{c = 1}^{k} \frac{|\rho^{(c)}| + r}{r} + \sum_{c = 1}^{h} \frac{|\theta^{(c)}| + s|\gamma^{(c)}|}{r} \\
& = \bigg(1 + \frac{s}{r}\bigg)(2g - 2 + n) + k + \frac{|\mu| + |s - \lambda|}{r} + \frac{|d - \pi|}{d} \\
& = -\deg(s) + \deg(\sigma).
\end{split}
\end{equation*}

\medskip

All together, this leads to the formula
\begin{equation}
\label{thevzn} 
\begin{split} 
H_{g,n}^{\text{2nd}}(\mu) & = (r/s)^{2g -2 + n} \prod_{i = 1}^{n} \frac{(\mu_i/r)^{\lfloor \mu_i/r \rfloor}}{\lfloor \mu_i/r \rfloor!} \sum_{\substack{\overline{\lambda} \in \overline{\mathcal{P}}_{s} \\ |\overline{\lambda}| = |\mu|}} \,\,\sum_{\substack{\overline{\pi} \in \overline{\mathcal{P}}_{d} \\ |\overline{\pi}| = 2g - 2 + n + (|\mu| + |s - \lambda|)/s}}  \!\!\!\!\!\!\!\!\!\!\!\!\!\! (\sigma r)^{(|d - \pi| + |\overline{\pi}|)/d} \cdot \vec{q}_{\overline{\lambda}} \cdot \vec{p}_{\overline{\pi}} \\ 
& \quad \times \Bigg(\sum_{h = 0}^{\ell(\pi)}\,\,\,  \sum_{k = 0}^{\ell(\lambda) - \ell(\pi)}  \frac{(-1)^{h - \ell(\pi)} (r/s)^{k}}{k!h!} \sum_{\substack{\boldsymbol{\rho} \in \mathcal{P}\!\!\mathcal{P}_{s - 1}^{k} \\ (\boldsymbol{\theta},\boldsymbol{\gamma}) \in (\mathcal{P}_{s - 1,d - 1}^{+r})^h \\ \sqcup \boldsymbol{\gamma} = d - \pi \\ \sqcup \boldsymbol{\theta} \sqcup \boldsymbol{\rho} = s - \lambda}} (-1)^{\sum_{c = 1}^{k} (\ell(\rho^{(c)}) - 1)}  \\
& \quad \times
\prod_{c = 1}^{k} \frac{\big[\frac{s - |\rho^{(c)}|}{r}\big]_{\ell(\rho^{(c)}) - 1}}{|\text{Aut}(\rho^{(c)})|} \, \cdot  \prod_{c = 1}^{h} \frac{(d - |\gamma^{(c)}|)!}{(d + 1 - |\gamma^{(c)}| - \ell(\theta^{(c)}))!} \cdot \frac{\big[\frac{d + 1 - |\gamma^{(c)}|}{d}\big]_{\ell(\gamma^{(c)}) - 1}}{|\text{Aut}(\gamma^{(c)})| \cdot |\text{Aut}(\theta^{(c)})|} \\
& \quad \times \prod_{c = 1}^{h}  \bigg(d + 1 - |\gamma^{(c)}| - \frac{|\theta^{(c)}|}{s}\bigg) \cdot \int_{\overline{\mathcal{M}}_{g,n + k + h}} \frac{\Omega^{(r,s)}_{g;-\overline{\mu},s - |\boldsymbol{\rho}|,s(d + 1 - |\boldsymbol{\gamma}|) - |\boldsymbol{\theta}|}}{\prod_{i = 1}^n \left(1 - \frac{\mu_i}{r} \psi_i \right)}\Bigg).
\end{split} 
\end{equation}
In this formula we use the convention that if $\pi$ is empty, \textit{i.e.} $\overline{\pi}$ only has parts $d$, then the sum over $h$ only contains $h = 0$; if $\pi$ is non-empty the sum should start at $h = 1$. Likewise, if $\lambda$ is empty, the sum over $k,h$ only contains the $k = h = 0$ term, which is equal to $1$.
 
\vspace{0.2cm}

\subsection{Structure of the formula}
 
The formula \eqref{thevzn} is heavy but we can stress a few structural properties. First, the factor in front of the integral in the last line of \eqref{thevzn} is the index of the corresponding insertion in the $\Omega$-class divided by $s$. These insertions take values in $\{1,\ldots,r - 1\}$ due to the constraints on the size of $|\theta^{(c)}|$ and $|\gamma^{(c)}|$ in Definition~\ref{thetrho}. Furthermore, the insertions $s - |\rho^{(c)}|$ take values in the smaller range $\{1,\ldots,d - 1\}$.

\medskip

We recall that the powers of $p_d$ and $q_s$ are given by \eqref{dpdqfinal}, namely
\begin{equation}
\begin{split}
\deg(p_d) & = \frac{2g - 2 + n - |\pi|}{d} + \frac{|\mu| + |s - \lambda|}{r},  \\
\deg(q_s) & = \frac{|\mu| - |\lambda|}{s},
\end{split}
\end{equation}
and they may be negative. The formula for $\deg(q_s)$ is as expected. The modular condition \eqref{eq:acondition} for the insertions in the $\Omega$-class says that
\begin{equation*}
\begin{split}
(2g - 2 + n + k + h)s & = -|\mu| + sk - \sum_{c = 1}^{k} |\rho^{(c)}| + s(d + 1)h - s\sum_{c = 1}^{h} \big(|\theta^{(c)}| + s|\gamma^{(c)}|\big) \quad \text{mod}\,\,r \\
& = -|\mu| + s(k + h) - |s - \lambda| - |d - \pi| \quad \text{mod}\,\,r.
\end{split}
\end{equation*}
Since $r = ds$, this is equivalent to $\deg(p_d)$ being an integer, so $\overline{\pi}$ is indeed an extended partition.

\medskip

In a few instances the sums simplify
\begin{itemize}
\item If $\max_{i \neq j} (\pi_i + \pi_j) \leq d$, the only surviving term in the $h$-sum is $h = \ell(\pi)$.
\item If $\max_{i \neq j} (\lambda_i + \lambda_j) \leq s$, the only surviving term is $k + h = \ell(\lambda)$.
\item If both conditions are satisfied, we must have $k = \ell(\lambda) - \ell(\pi)$ and $h = \ell(\pi)$, so the sum over $k,h$ consists of $k!h!$ terms which are all equal, therefore yields a single $\Omega$-integral.
\end{itemize}

\vspace{0.2cm}

\subsection{Polynomiality and vanishing}
\label{sec:polysecond}

The spectral curve $\mathscr{S}_{1}^{\text{2nd}}$ coincides with the family  $\mathscr{S}^{(\hat{\psi},\hat{y})}$ introduced in Section~\ref{sec:wHn} and \eqref{PQpsi}-\eqref{Shatpsi}, up to multiplying $P$ with $\sigma$. Therefore, $H_{g,n}^{\text{2nd}}(\mu)$ can be interpreted in terms of weighted Hurwitz numbers. Accordingly $H_{g,n}^{\text{2nd}}$ cannot contain terms with $\deg(p_d) < 0$ or $\deg(q_s) < 0$, and we get vanishing relations for the last three lines of \eqref{thevzn}. They can be reformulated in terms of the $(s - 1)$-partition $\nu = s - \lambda$ and the $(d - 1)$-partition $\tau = d - \pi$. Taking into account divisibility as in Section~\ref{sec:polyfirst}:
\begin{equation}
\label{degneg}
\begin{split}
\deg(p_d) < 0 & \quad \Longleftrightarrow \quad (2g -2 + n + |\tau|)s + |\mu| + |\nu| \leq d(\ell(\tau) - 1), \\
\deg(q_s) < 0  & \quad \Longleftrightarrow \quad |\mu| + |\nu| \leq s(\ell(\nu) - 1).
\end{split}
\end{equation}

\begin{theorem}
\label{genev} Let $d \geq 2$, $s \geq 1$ and set $r = ds$. Let $g \geq 0$. Let $\mu$ be a non-empty partition, $\nu$ is a (possibly empty) $(s - 1)$-partition, and $\tau$ a (possibly empty) $(d - 1)$-partition. Assuming that one of the two conditions \eqref{degneg} is satisfied, we have
\begin{equation*}
\begin{split}
0 =& \quad  \sum_{h = 0}^{\ell(\tau)}\,\, \sum_{k = 0}^{\ell(\nu) - \ell(\tau)} \frac{(-1)^{\ell(\tau) - h} (r/s)^k}{h!k!} \sum_{\substack{\boldsymbol{\rho} \in \mathcal{P}\!\!\mathcal{P}_{s - 1}^{k} \\ (\boldsymbol{\theta},\boldsymbol{\gamma}) \in (\mathcal{P}_{s - 1,d - 1}^{+r})^h \\ \sqcup \boldsymbol{\gamma} = \tau \\ \sqcup \boldsymbol{\theta} \sqcup \boldsymbol{\rho} = \nu}} (-1)^{\sum_{c = 1}^{k} (\ell(\rho^{(c)}) - 1)} \\
& \quad \times \prod_{c = 1}^{k} \frac{\big[\frac{s - |\rho^{(c)}|}{r}\big]_{\ell(\rho^{(c)}) - 1}}{|\text{Aut}(\rho^{(c)})|} \,\,\times   \prod_{c = 1}^{h} \frac{(d - |\gamma^{(c)}|)!}{(d + 1 - |\gamma^{(c)}| - \ell(\theta^{(c)}))!} \cdot \frac{\big[\frac{d + 1 - |\gamma^{(c)}|}{d}\big]_{\ell(\gamma^{(c)}) - 1}}{|\text{Aut}(\gamma^{(c)})| \cdot |\text{Aut}(\theta^{(c)})|} \\
& \quad \times \prod_{c = 1}^{h}  \bigg(d + 1 - |\gamma^{(c)}| - \frac{|\theta^{(c)}|}{s}\bigg) \cdot \int_{\overline{\mathcal{M}}_{g,n + k + h}} \frac{\Omega^{(r,s)}_{g;-\overline{\mu},s - |\boldsymbol{\rho}|,s(d + 1 - |\boldsymbol{\gamma}|) - |\boldsymbol{\theta}|}}{\prod_{i = 1}^n \left(1 - \frac{\mu_i}{r} \psi_i \right)}.
\end{split} 
\end{equation*}
\end{theorem}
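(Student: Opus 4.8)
The plan is to read the theorem off directly from the closed formula \eqref{thevzn} for $H_{g,n}^{\text{2nd}}(\mu)$, the genuine analytic content having already been spent in deriving that formula. First I would recall from Section~\ref{sec:polysecond} that the spectral curve $\mathscr{S}_1^{\text{2nd}}$ belongs, after rescaling $P$ by $\sigma$, to the family $\mathscr{S}^{(\hat{\psi},\hat{y})}$ of \eqref{Shatpsi}, so that Theorem~\ref{thm:WHTR} identifies the expansion coefficients $H_{g,n}^{\text{2nd}}(\mu)$ with the connected weighted Hurwitz numbers. The structural results of \cite{bych-dun-kaz-sha} then guarantee that, for fixed $g,n$ and fixed $\mu$, this quantity is a genuine \emph{polynomial} in the coefficients $p_1,\ldots,p_d$ of $P$ and $q_1,\ldots,q_s$ of $Q$ (and in $\sigma$); in particular it contains no negative powers of $p_d$ nor of $q_s$. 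This is the crucial input playing the role of property (iii).

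Next I would treat the $\Omega$-integrals as abstract numbers independent of $\sigma,p,q$, so that \eqref{thevzn} exhibits $H_{g,n}^{\text{2nd}}(\mu)$ as a series in the monomials $\sigma^{\deg(\sigma)}\vec{q}_{\overline{\lambda}}\,\vec{p}_{\overline{\pi}}$ indexed by extended partitions $\overline{\lambda}\in\overline{\mathcal{P}}_s$, $\overline{\pi}\in\overline{\mathcal{P}}_d$. For a fixed pair the exponent of $\sigma$ is determined by $\overline{\pi}$ and the size of $\overline{\lambda}$, so distinct pairs yield distinct monomials in $(\sigma,p,q)$ whose coefficients can be isolated by linear independence. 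The multiplicities of $d$ in $\overline{\pi}$ and of $s$ in $\overline{\lambda}$ are precisely $\deg(p_d)$ and $\deg(q_s)$ recorded in \eqref{dpdqfinal}. Invoking the polynomiality of the previous step, the coefficient of every monomial with $\deg(p_d)<0$ or $\deg(q_s)<0$ must therefore vanish.

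It then remains to rewrite this vanishing coefficient in the partition variables of the statement. Writing $\nu = s-\lambda$ and $\tau = d-\pi$, where $\lambda,\pi$ are the ordinary parts of $\overline{\lambda},\overline{\pi}$, one has $\ell(\nu)=\ell(\lambda)$ and $\ell(\tau)=\ell(\pi)$, the internal constraints $\sqcup\boldsymbol{\gamma}=d-\pi$ and $\sqcup\boldsymbol{\theta}\sqcup\boldsymbol{\rho}=s-\lambda$ become $\sqcup\boldsymbol{\gamma}=\tau$ and $\sqcup\boldsymbol{\theta}\sqcup\boldsymbol{\rho}=\nu$, and the sign $(-1)^{h-\ell(\pi)}$ becomes $(-1)^{\ell(\tau)-h}$; in other words the parenthesised expression in \eqref{thevzn} is already verbatim the sum in the statement. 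Finally I would translate the negativity of the degrees into the combinatorial bounds \eqref{degneg}: substituting $|\pi|=d\ell(\tau)-|\tau|$ and $|\lambda|=s\ell(\nu)-|\nu|$ into \eqref{dpdqfinal}, the conditions $\deg(q_s)<0$ and $\deg(p_d)<0$ become strict inequalities which, since the modular constraint \eqref{eq:acondition} with $r=ds$ forces $\deg(q_s)$ and $\deg(p_d)$ to be integers, sharpen to the stated $\leq$ bounds exactly as in Section~\ref{sec:polyfirst}.

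Because everything reduces to bookkeeping once \eqref{thevzn} is available, there is no single hard analytic step here. The point requiring genuine care is the coefficient extraction: one must verify that the monomials $\sigma^{\deg(\sigma)}\vec{q}_{\overline{\lambda}}\,\vec{p}_{\overline{\pi}}$ arising from different index data are pairwise distinct, so that no cancellation can occur between contributions with different $(\overline{\lambda},\overline{\pi})$, and that the passage from the strict inequality $\deg<0$ to the integer bound uses the divisibility correctly. This is precisely where the hypothesis $r=ds$ is indispensable and where the two alternative conditions in \eqref{degneg} originate, one from the forbidden negative power of $q_s$ and one from that of $p_d$.
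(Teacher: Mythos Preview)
Your proposal is correct and follows essentially the same route as the paper: identify $H_{g,n}^{\text{2nd}}(\mu)$ with weighted Hurwitz numbers via Theorem~\ref{thm:WHTR}, use the resulting polynomiality in the $p$'s and $q$'s to force the coefficient of any monomial with $\deg(p_d)<0$ or $\deg(q_s)<0$ in \eqref{thevzn} to vanish, and then translate the indexing $(\lambda,\pi)\mapsto(\nu,\tau)$ together with the integrality of the degrees to obtain \eqref{degneg}. Your explicit remark that distinct $(\overline\lambda,\overline\pi)$ produce distinct monomials (so that no accidental cancellation can rescue a forbidden term) is a useful clarification that the paper leaves implicit.
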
 
For given $\mu,g,n$, one can always find a vanishing relation by taking $\nu$ and $\tau$ large enough. 

\medskip

Let us spell this out in the simpler case $s = 1$, i.e. $r = d$. Then we do not have any $\boldsymbol{\rho}$ and $\boldsymbol{\theta}$, and $\deg(q_1)$ is always nonnegative. Taking to account the remark about $s = 1$ at the end of Section~\ref{S41}, the formula \eqref{thevzn} then simplifies:
\begin{equation*}
\begin{split}
H_{g,n}^{\text{2nd}}(\mu) & = r^{2g - 2 + n} \prod_{i = 1}^{n} \frac{(\mu_i/r)^{\lfloor \mu_i/r \rfloor}}{\lfloor \mu_i/r \rfloor !} \sum_{\substack{\overline{\pi} \in \overline{\mathcal{P}}_{r} \\ |\overline{\pi}| = 2g - 2 + n + |\mu|}} (\sigma r)^{\ell(\pi) + (2g - 2 + n + |\mu| - |\pi|)/r}  \cdot q_1^{|\mu|} \cdot \vec{p}_{\overline{\pi}} \\
& \quad \times \left(\sum_{h = 0}^{\ell(\pi)} \frac{(-1)^{\ell(\pi) - h}}{h!} \sum_{\substack{\boldsymbol{\gamma} \in \mathcal{P}_{r - 1}^{h} \\ 2 \leq |\gamma^{(c)}|  \leq r - 1 \\ \sqcup \boldsymbol{\gamma} = r - \pi}} \frac{\big[\frac{r + 1 - |\gamma^{(c)}|}{r}\big]_{\ell(\gamma^{(c)}) - 1}}{|\text{Aut}(\gamma^{(c)})|} \int_{\overline{\mathcal{M}}_{g,n + h}} \frac{\Omega^{(r,1)}_{g;-\overline{\mu},r + 1 - |\boldsymbol{\gamma}|}}{\prod_{i = 1}^n \left(1 - \frac{\mu_i}{r}\psi_i\right)}\right).
\end{split}
\end{equation*}
As usual, if $\pi$ is empty the sum over $h$ reduces to $h = 0$; if $\pi$ is non-empty it starts at $h = 1$. Under the condition $\deg(p_d) < 0$, that is
$$
2g - 2 + n + |\tau| + |\mu| \leq d(\ell(\tau) - 1).
$$ 
with $\tau = d - \pi$, we get
$$
\sum_{h = 0}^{\ell(\tau)} \frac{(-1)^{\ell(\tau) - h}}{h!} \sum_{\substack{\boldsymbol{\gamma} \in \mathcal{P}_{r - 1}^{h} \\ 2 \leq |\gamma^{(c)}| \leq r - 1 \\ \sqcup \boldsymbol{\gamma} = \tau}} \frac{\big[\frac{r + 1 - |\gamma^{(c)}|}{r}\big]_{\ell(\gamma^{(c)}) - 1}}{|\text{Aut}(\gamma^{(c)})|} \int_{\overline{\mathcal{M}}_{g,n+h}} \frac{\Omega^{(r,1)}_{g;-\overline{\mu},r + 1 - |\boldsymbol{\gamma}|}}{\prod_{i = 1}^n \left(1 - \frac{\mu_i}{r} \psi_i\right)}.
$$
Here, the requirement that $\gamma^{(c)}$ has size $\{2,\ldots,r - 1\}$ is implied by Definition~\ref{thetrho} for empty $\theta^{(c)}$. So, if $\pi$ has parts of size $1$ they cannot appear alone in a $\gamma^{(c)}$. In particular, if $\pi$ has no parts $1$ and $\tau$ is bounded, i.e. $\min_{i \neq j} (\tau_i + \tau_j) \geq r$, there only remains a single term
$$
\int_{\overline{\mathcal{M}}_{g,n + \ell(\tau)}} \frac{\Omega^{(r,1)}_{g;-\overline{\mu},r + 1 - \tau}}{\prod_{i = 1}^{n} \left(1 - \frac{\mu_i}{r} \psi_i\right)} = 0.
$$  
This proves Theorem~\ref{th:B}.

\vspace{1cm}
\section{The third deformation}
\label{sec:Deformation:three}
\vspace{0.5cm}

\subsection{Setting and vanishing result} Let $r \geq 2$ and $s \in \{1,\ldots,r - 1\}$. Contrarily to Sections~\ref{sec:Deformation:one}-\ref{sec:Deformation:two}, we do not assume any divisibility condition between $r$ and $s$. We consider the spectral curve
\begin{equation}
\label{St3rd}
\mathscr{S}_{t}^{\text{3rd}} \,: \quad \left\{\begin{array}{lll} x_t(z) & = & \ln z - \big(p_r z^r + t \tilde{P}(z)\big) \\[2pt] y_t(z) & = & z^{s} \end{array}\right.
\end{equation}
with 
$$
\tilde{P}(z) = \sum_{j = 1}^{r - s} p_j z^j.
$$
Let $F_{g,n}^{t}(z_1,\ldots,z_n)$ be the free energies associated to this curve. The computations with this deformation follow the same steps as in Sections~\ref{sec:Deformation:one}-\ref{sec:Deformation:two} without additional difficulty --- here it is important that  $\tilde{P}$ has degree at most $r - s$ instead of $r - 1$, otherwise the simplifying tricks used in the proofs of Lemma~\ref{QTev} (see also \cite[Lemma 5.3]{BDKLM}) or Lemma~\ref{Fll} would not work. We will therefore omit the details and only state the results.

\medskip

The deformation $1$-form is
$$
\eta_t(z) = -\Res_{w = \infty} \omega_{0,2}(z,w) f_t(w),\qquad \dd f_t(z) = s \tilde{P}(z) z^{s - 1}\dd z,
$$
The evaluation of the Taylor series at $t = 1$ yields an expansion as $z_i \rightarrow 0$
$$
F_{g,n}^{1}(z_1',\ldots,z_n') \approx \sum_{\mu_1,\ldots,\mu_n > 0} H_{g,n}^{\text{3rd}}(\mu_1,\ldots,\mu_n) \prod_{i = 1}^n e^{\mu_i x_1(z_i')},
$$
with
\begin{equation}
\label{Hgn3rddd} 
\begin{split}  
H_{g,n}^{\text{3rd}}(\mu) & =  (s^{-1}r^{1 + s/r})^{2g - 2 + n}  \prod_{i = 1}^n \frac{(\mu_i/r)^{\lfloor \mu_i/r \rfloor}}{\lfloor \mu_i/r \rfloor!}  \!\!\!\!\!\sum_{\substack{\overline{\pi} \in \overline{\mathcal{P}}_{r} \\ \text{with parts in} \,\,\{1,\ldots,r - s\} \cup \{r\} \\ |\overline{\pi}| = (2g - 2 + n)s + |\mu|}} r^{(|\mu| + |r + s - \pi|)/r} \cdot \vec{p}_{\overline{\pi}} \\
& \quad \times \left(\sum_{k = 0}^{\ell(\pi)} \frac{1}{k!}  \sum_{\substack{\boldsymbol{\rho} \in \mathcal{P}_{s,r + s - 1}^{k} \\ |\rho^{(c)}| \leq r +s - 1 \\ \sqcup \boldsymbol{\rho} = r + s - \pi}} \prod_{c = 1}^{k} \frac{\big[\frac{r + s - |\rho^{(c)}|}{r}\big]_{\ell(\rho^{(c)}) - 1}}{|\text{Aut}(\rho^{(c)})|} \cdot \int_{\overline{\mathcal{M}}_{g,n + k}} \frac{\Omega^{(r,s)}_{g;-\overline{\mu},r + s - |\boldsymbol{\rho}|}}{\prod_{i = 1}^{n} \left(1 - \frac{\mu_i}{r} \psi_i\right)}\right),
\end{split}  
\end{equation}
where $\mathcal{P}_{s,r + s - 1}$ was defined in Section~\ref{Sec:not} as the set of partitions which are either empty or have parts of size $\{s,s + 1,\ldots,r + s - 1\}$. The constraints on the (possibly empty) $(r + s - 1)$-partitions $\rho^{(c)}$ imply that the corresponding insertions in the $\Omega$-class are in $\{1,\ldots,r\}$, but an index $r$ in the $\Omega$-class is equivalent to an index $0$, all insertions effectively remain in the fundamental range $\{0,\ldots,r-1\}$. For $|\rho^{(c)}| = r$ we have an insertion of an index $s$, which is the unit --- we recall from Theorem~\ref{CohFTl} that in the range $s \in \{1,\ldots,r-1\}$ considered here, $\Omega^{(r,s)}$ forms a cohomological field theory with flat unit.

\medskip

As $s$ may not divide $r$, the spectral curve \eqref{St3rd} is not of the form considered in \cite{bych-dun-kaz-sha} and we do not have \textit{a priori} a combinatorial meaning for $H_{g,n}^{\text{3rd}}(\mu)$ from which one could infer that it cannot contain negative powers of $p_1,\ldots,p_{r - s}$. Nevertheless, we justify in the next section that this is indeed the case, by a direct study of the topological recursion formula.

\begin{proposition}
\label{polynom3} For any $\mu_1,\ldots,\mu_n > 0$, $H_{g,n}^{\text{3rd}}(\mu)$ is a power series in the variables $p_1,\ldots,p_{r - s},p_r$.
\end{proposition}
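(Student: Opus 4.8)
The plan is to peel off everything that the deformation derivation already gives for free, reduce the statement to a single regularity assertion as $p_r\to 0$, and then establish that assertion by a direct inspection of the topological recursion formula \eqref{Trformula}. The starting observation is that in \eqref{Hgn3rddd} the sum runs over \emph{extended} $r$-partitions $\overline{\pi}$ with parts in $\{1,\dots,r-s\}\cup\{r\}$: the multiplicities $m_j$ with $j\le r-s$ are nonnegative, and only the multiplicity $m_r$ of the part $r$ may be negative. Thus, for fixed $g,n,\mu$, the quantity $H^{\text{3rd}}_{g,n}(\mu)$ is manifestly a polynomial in $p_1,\dots,p_{r-s}$ times a \emph{Laurent} polynomial in $p_r$; no other denominators can appear. (This is also consistent with the homogeneity coming from the rescaling $z\mapsto\lambda z$, $p_j\mapsto\lambda^j p_j$, $y\mapsto\lambda^s y$ of \eqref{St3rd}, which as in \eqref{Fgn000} forces $H^{\text{3rd}}_{g,n}(\mu)$ to be weighted-homogeneous of degree $(2g-2+n)s+|\mu|$ with $p_j$ of weight $j$.) Since $H^{\text{3rd}}_{g,n}(\mu)$ has only finitely many negative powers of $p_r$ with coefficients polynomial in the remaining variables, proving the proposition is equivalent to showing that $\lim_{p_r\to 0}H^{\text{3rd}}_{g,n}(\mu)$ exists and is finite for generic fixed $p_1,\dots,p_{r-s}$.

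Next I would translate finiteness at $p_r=0$ into a statement about $\omega^{1}_{g,n}$ near $z=0$. Because $\dd x$ and $\dd y$ are rational on $\PP^1$, the correlators $\omega^{1}_{g,n}$ are rational differentials whose only poles lie at the ramification set $\mathcal{R}=\{z\,P'(z)=1\}$; as $z=0$ is never a zero of $\dd x$, the point $z=0$ is a regular point of $\omega^{1}_{g,n}$ for all small $p_r$, including $p_r=0$. Inverting the expansion in \eqref{Hgn3rddd} gives the residue extraction $H^{\text{3rd}}_{g,n}(\mu)=\prod_{i=1}^{n}\tfrac{1}{\mu_i}\Res_{z_i=0}\bigl(e^{-\mu_i x(z_i)}\,\omega^{1}_{g,n}\bigr)$, where $e^{-\mu_i x(z_i)}=z_i^{-\mu_i}e^{\mu_i P(z_i)}$ and $e^{\mu_i P}$ is a power series in $z_i$ whose coefficients are polynomials in $p_1,\dots,p_{r-s},p_r$. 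Hence $H^{\text{3rd}}_{g,n}(\mu)$ is a finite $\CC$-linear combination, with coefficients polynomial in the $p_j$, of the first $\mu_i$ Taylor coefficients of $\omega^{1}_{g,n}$ at each $z_i=0$. It therefore suffices to prove that these finitely many Taylor coefficients stay finite as $p_r\to 0$.

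Finally I would prove this by induction on $2g-2+n$ using \eqref{Trformula}, the unstable inputs $\omega_{0,1}=z^s(\tfrac1z-P')\dd z$ and $\omega_{0,2}$ being manifestly regular at $p_r=0$. At $p_r=0$ the curve $x=\ln z-\tilde P(z)$, $y=z^s$ is still admissible for generic $p_1,\dots,p_{r-s}$, with $r-s$ simple ramification points $\mathcal{R}_{\mathrm{fin}}$ that stay bounded and bounded away from $0$, while the remaining $s$ zeros of $\dd x$ form a set $\mathcal{R}_{\mathrm{esc}}$ escaping to $z=\infty$. Splitting the sum over $\mathcal{R}$ in \eqref{Trformula} accordingly, the residues over $\mathcal{R}_{\mathrm{fin}}$ depend analytically on $p_r$ near $0$ — the ramification points, the local involution, the kernel, and (by the induction hypothesis) the lower correlators all converge — and reproduce in the limit the topological recursion on the $p_r=0$ curve, hence contribute a finite germ at $z=0$. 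The crux, which I expect to be the main obstacle, is to show that the residues over $\mathcal{R}_{\mathrm{esc}}$ do not contribute to the germ at $z=0$ in the limit. Keeping $z_0$ on a fixed circle $|z_0|=\varepsilon$ disjoint from $\mathcal{R}$, one estimates $K_\alpha(z_0,z)$ for $\alpha\in\mathcal{R}_{\mathrm{esc}}$: the numerator $\tfrac12\bigl(\tfrac{1}{z_0-z}-\tfrac{1}{z_0-\overline z}\bigr)\approx\tfrac{z-\overline z}{z\overline z}$ is suppressed like a negative power of $|\alpha|$, and this must be balanced against the growth near $z=\infty$ of $1/\bigl((y(z)-y(\overline z))\dd x(z)\bigr)$ and of the lower correlators, which themselves have poles at the nearby escaping points. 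This is precisely where the hypothesis $\deg\tilde P\le r-s$ enters: it guarantees that the leading singularity of $y\,\dd x$ at $\infty$ is carried by the $p_r z^{r+s-1}$ term, so that the escaping-kernel contributions are suppressed fast enough to vanish as $\alpha\to\infty$. Once this matched estimate is in place, one concludes that $\lim_{p_r\to 0}H^{\text{3rd}}_{g,n}(\mu)$ is finite, which together with the a priori Laurent structure forces all powers of every $p_j$ to be nonnegative, proving the proposition.
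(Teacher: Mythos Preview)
Your overall plan --- reduce to showing finiteness of $H^{\text{3rd}}_{g,n}(\mu)$ as $p_r \to 0$, then run an induction on $2g-2+n$ through the recursion \eqref{Trformula}, splitting $\mathcal{R}$ into bounded and escaping ramification points --- is the paper's strategy. But the induction as you state it does not close, for a structural reason.

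Your induction hypothesis is that the germ of $\omega_{g',n'}$ at $z_i = 0$ remains bounded as $p_r \to 0$. That is not enough to carry out the recursive step. In \eqref{Trformula}, the lower correlators $\omega_{h,1+|J|}(z,J)$ and $\omega_{h',1+|J'|}(\bar z, J')$ are evaluated with the first variable $z$ (resp.~$\bar z$) near the ramification point $\alpha$ --- which may be escaping --- while only the remaining variables $J, J'$ sit near $0$. The germ at $0$ says nothing about this. For instance, $\omega_{0,3}$ has poles at \emph{every} escaping $\alpha$, and at the next step of the induction those poles are precisely what get fed back in. So the ``lower correlators'' in your matched estimate carry their own divergences at escaping points, and you have not explained why the product with the kernel stays controlled.

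The paper fixes this by strengthening the induction hypothesis: instead of the germ at $0$, one tracks the full principal-part decomposition
\[
F_{g,n} = \sum G_{g,n}\big[\begin{smallmatrix} j_1 & \cdots & j_n\\ k_1 & \cdots & k_n\end{smallmatrix}\big]\,\prod_i \Big((z_i - \alpha_{j_i})^{-(k_i+1)} - (-\alpha_{j_i})^{-(k_i+1)}\Big)
\]
and proves by induction that each coefficient $G_{g,n}$ has a \emph{regular Puiseux series expansion} in $p_r$. This encodes the behaviour near every ramification point, escaping or not, and is stable under the recursion once one checks (Lemmata~\ref{prop:regularroots.1}--\ref{lem:kernel}) that the roots, the involution coefficients $a_k$, and the kernel Taylor coefficients $K_{\alpha,m}$ themselves have regular Puiseux expansions. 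The key kernel computation gives a normalising constant $c_\alpha = O(p_r^{(s-2)/r'})$ for escaping $\alpha$, which is merely $O(1)$ when $s = 2$: the escaping contribution is not suppressed, it just stays bounded. Your claim that it vanishes is stronger than what one actually needs or obtains.

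Two smaller points. The restriction $\deg \tilde P \le r-s$ is not what makes the polynomiality argument work; the paper runs its proof for the general $P(z) = \sum_{j=1}^r p_j z^j$. That degree restriction is needed earlier, for the deformation-form computation leading to \eqref{Hgn3rddd}, not here. And the case $s = 1$ makes the escaping-kernel constant blow up; the paper disposes of it separately via \cite{bych-dun-kaz-sha} and runs the direct argument only for $s \ge 2$, which you should also flag.
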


By comparison with \eqref{Hgn3rddd} which may contain negative powers of $p_r$, we obtain vanishing relations. More precisely, the power of $p_r$ is
$$
\deg(p_r) = (2g -2 + n)s + |\mu| - |\pi| = (2g -2 + n) s + |\mu| - |r + s - \tau|
$$
in terms of the $(r + s - 1)$-partition $\tau = r  + s - \pi$. Due to the modular condition \eqref{eq:acondition}, we have $\deg(p_r) < 0$ is equivalent to
$$
(2g - 2 + n) + |\mu| + |\tau| \leq (r + s)\ell(\tau) - r
$$ 
with $n = \ell(\mu)$.  If this condition is satisfied, we then have the vanishing relation
$$ 
\sum_{k = 0}^{\ell(\tau)} \frac{1}{k!} \sum_{\substack{\boldsymbol{\rho} \in \mathcal{P}_{s,r + s - 1}^{k} \\ |\rho^{(c)}| \leq r + s - 1 \\ \sqcup \boldsymbol{\rho} = \tau}} \prod_{c = 1}^k \frac{\big[\frac{r + s - |\rho^{(c)}|}{r}\big]_{\ell(\rho^{(c)}) - 1}}{|\text{Aut}(\rho^{(c)})|} \int_{\overline{\mathcal{M}}_{g,\ell(\mu) + \ell(\tau)}} \frac{\Omega^{(r,s)}_{g;-\overline{\mu},r + s - |\boldsymbol{\rho}|}}{\prod_{i = 1}^{\ell(\mu)} \left(1 - \frac{\mu_i}{r} \psi_i\right)} = 0.
$$ 
Note that there is no alternating sign in this sum, as in Section~\ref{sec:Deformation:two} for the sum over $k$ when we deformed $P$, and unlike Section~\ref{sec:Deformation:one} when we deformed $Q$. The sum is non-empty only if the parts of $\tau$ all belong to $\{s,\ldots,r + s - 1\}$.  
Furthermore, if $\tau$ is bounded, \textit{i.e.} $\min_{i \neq j} (\tau_i + \tau_j) \geq r + s$, only $k = \ell(\tau)$ survives in the sum and we get the vanishing of a single $\Omega$-integral
$$
\int_{\overline{\mathcal{M}}_{g,n + \ell(\tau)}} \frac{\Omega^{(r,s)}_{g;-\overline{\mu},r + s - \tau}}{\prod_{i = 1}^{n} \left(1 - \frac{\mu_i}{r} \psi_i\right)} = 0.
$$  
This proves Theorem~\ref{th:C}.

\vspace{0.2cm}

\subsection{Proof of polynomiality (Proposition~\ref{polynom3})}

\label{sec:polynom3}

We keep the assumption $r \geq 2$ and $s \in \{1,\ldots,r - 1\}$. For the sake of clarity of our method, most of the argument will be given for the more general spectral curve
\begin{equation}
\label{spnung}\left\{\begin{array}{lll} x_t(z) & = &  \ln z - P(z) \\[2pt] y_t(z) & = &  z^s \end{array}\right. \qquad \text{with} \qquad P(z) = \sum_{j = 1}^{r} p_j z^j
\end{equation}
We want to examine the behavior of the correlators or free energies of topological recursion as functions of $p_r \rightarrow 0$. The ingredients of the topological recursion were listed in Section~\ref{TRreview}: we need to examine the behavior of ramification points (zeros of $x'(z)$, collected in the set $\mathcal{R}$), of the local involution $z \mapsto \overline{z}$ near ramification points, of the recursion kernel $K_{\alpha}(z_0,z)$. Additionally, as we are interested in the coefficients of decomposition of the free energies on a suitable basis of functions (or equivalently, its expansion as $z_i \rightarrow 0$), we must examine as well the behavior of this basis of functions as $p_r \rightarrow 0$.

\medskip

Note that for $s = 1$ (and for all cases such that $s|r$), this spectral curve is of the form studied in Section~\ref{sec:wHn} and Proposition~\ref{polynom3} is a consequence of \cite{bych-dun-kaz-sha}. Therefore, we can assume $s \geq 2$ and $r \geq 3$.

\subsubsection{The ramification points}

We look for the zeros of
$$
zx'(z) = 1 - zP'(z) = 1 - \sum_{j = 1}^{r} jp_jz^j,
$$
and want to understand their behavior as $p_r \rightarrow 0$. These are also the zeros of 
\begin{equation}
\label{inversez} z^{-r} - \sum_{j = 1}^{r} jp_j (z^{-1})^{r - j} = 0.
\end{equation}
The left-hand side is a polynomial of fixed degree $r$ in the variable $z^{-1}$, so its set of $r$ zeros is a continuous functions of $p_r$ in a neighborhood of $0$. In particular, at $p_r = 0$ it has a zero of order $r'$ at $z^{-1} = 0$, where
$$
r' = \min\big\{j \in \{1,\ldots,r - 1\} \,\,|\,\,p_{r - j} \neq 0\big\}.
$$
If the minimum does not exist we set $r' = r$.

\begin{definition}
We call \emph{escaping root} a zero of $1 - zP'(z)$ going to infinity as $p_r \rightarrow 0$. The other zeros are called \emph{regular roots} and they have a limit as $p_r \rightarrow 0$.
\end{definition}

\begin{lemma}\label{prop:regularroots.1}
Assume that $r' \in \{1,\ldots,r-1\}$ and $(1 - zP'(z))|_{p_r = 0}$ has only simple zeros distinct from $0$ (this is a generic condition). In the regime $p_r \rightarrow 0$, the escaping roots have a Puiseux series expansion of the form $\alpha_j = \sum_{k \geq -1} \alpha_{j,k} p_r^{k/r'}$ while the regular roots have a power series expansion.
\end{lemma}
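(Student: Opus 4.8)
The plan is to work in the inverted variable $w = z^{-1}$, in which the ramification equation \eqref{inversez} becomes the monic polynomial equation $g(w) = 0$ with $g(w) = w^r - \sum_{j=1}^r j p_j w^{r-j}$. The crucial structural observation is that the entire dependence on $p_r$ sits in the constant term: $g(w) = g_0(w) - r p_r$, where $g_0(w) = g(w)|_{p_r=0}$. First I would record that $g_0$ vanishes to order exactly $r'$ at $w = 0$: the lowest-order monomial of $g_0$ is $w^{r-j}$ for the largest $j \le r-1$ with $p_j \neq 0$, and by the definition of $r'$ this $j$ equals $r - r'$, so $g_0(w) = w^{r'} h(w)$ with $h(0) = -(r-r')\,p_{r-r'} \neq 0$ (here $r - r' \geq 1$ since $r' \leq r-1$). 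Thus the $r$ roots of $g(w)$ split into the $r'$ \emph{escaping} roots, those tending to $w = 0$, i.e.\ $z \to \infty$, and the $r - r'$ \emph{regular} roots, those tending to the nonzero roots of $g_0$.

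For the regular roots I would invoke the implicit function theorem. By hypothesis the nonzero zeros of $g_0$ --- equivalently the roots of $h$ --- are simple, so $g_0'(w_*) \neq 0$ at each such $w_*$. Since $\partial_w\big(g_0(w) - r p_r\big) = g_0'(w)$ is nonzero at $(w_*, 0)$, the equation $g_0(w) = r p_r$ determines $w$ as an analytic (hence convergent power series) function of $p_r$ near $p_r = 0$, with $w(0) = w_* \neq 0$. As $w$ stays bounded away from $0$, $z = w^{-1}$ is likewise a convergent power series in $p_r$, which is the claimed form for the regular roots.

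The heart of the matter --- and the step I expect to require the most care --- is the escaping roots, where $g_0$ is degenerate and a genuine Puiseux expansion appears. Near $w = 0$ I would extract an $r'$-th root: choosing a branch of $h(w)^{1/r'}$ analytic near $w=0$, possible since $h(0) \neq 0$, and setting $u = w\,h(w)^{1/r'}$, the equation $w^{r'}h(w) = r p_r$ becomes $u^{r'} = r p_r$, i.e.\ $u = (r p_r)^{1/r'}$ with its $r'$ determinations. The map $w \mapsto u$ is analytic near $0$ with nonvanishing derivative $h(0)^{1/r'}$ at the origin, so by the inverse function theorem $w = \sum_{k \ge 1} c_k u^k$ with $c_1 = h(0)^{-1/r'} \neq 0$; substituting $u = (rp_r)^{1/r'}$ yields $w$ as a Puiseux series in $p_r$ with leading term of order $p_r^{1/r'}$. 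Inverting once more, $z = w^{-1} = c_1^{-1}(rp_r)^{-1/r'}\big(1 + O(p_r^{1/r'})\big)$, which is exactly a Puiseux series $\sum_{k \ge -1}\alpha_{j,k}\,p_r^{k/r'}$ with leading exponent $-1/r'$; the $r'$ branches of $(rp_r)^{1/r'}$ account precisely for the $r'$ escaping roots. The only delicate points are verifying $h(0) \neq 0$ so that the root extraction is legitimate (already settled above from the definition of $r'$), and checking that the genericity hypothesis keeps all these roots simple and mutually distinct for small $p_r \neq 0$, so that the hypotheses of topological recursion recalled in Section~\ref{TRreview} persist along the deformation.
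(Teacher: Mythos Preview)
Your proof is correct and takes a cleaner, genuinely different route than the paper's. The paper labels the two families of roots, notes they are algebraic in $p_r$ and hence admit Puiseux expansions, and then pins down the structure of these expansions by a somewhat laborious manipulation of Vi\`ete's formulae: it separates the elementary symmetric polynomials of the escaping and regular roots, writes the resulting relations as triangular linear systems in the Puiseux/Taylor coefficients, and inverts them using a Vandermonde nonvanishing coming from the simplicity hypothesis. Your approach instead exploits directly the factorisation $g(w) = g_0(w) - r p_r$ in the inverted variable and applies the implicit function theorem at each simple nonzero root of $g_0$ for the regular branch, and the analytic change of variable $u = w\,h(w)^{1/r'}$ together with the inverse function theorem to linearise the equation to $u^{r'} = r p_r$ for the escaping branch. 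This is both shorter and more conceptual; the paper's argument buys nothing extra here, though its Vi\`ete-style bookkeeping is in the spirit of the explicit expansions used later in Section~\ref{sec:polynom3}. Your closing remark about simplicity for small $p_r \neq 0$ is not needed for the lemma itself but is indeed relevant for the downstream application.
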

\begin{proof}
Let $\beta_{r' + 1},\ldots,\beta_r$ be the roots of $(1 - zP'(z))\big|_{p_r = 0}$ and
$$
\forall j \in \{1,\ldots,r'\}\qquad \zeta_j = e^{2{\rm i}\pi j/r'} \bigg(-\frac{(r-r')p_{r - r'}}{r}\bigg)^{1/r'},
$$
for some fixed choice of $r'$-th root in the bracket. We can label $\alpha_{r' + 1},\ldots,\alpha_r$ the regular roots tending to $\beta_{r' +1},\ldots,\beta_r$, and $\alpha_{1},\ldots,\alpha_{r'}$ the escaping roots which are such that $\alpha_j \sim \zeta_j p_r^{-1/r'}$ --- an inspection of \eqref{inversez} reveals that they behave in this way. The assumptions we have taken imply that $\alpha_1,\ldots,\alpha_r$ are continuous functions of $p_r$ in a sectorial neighborhood of $0$. Notice that the zeros of any polynomial are algebraic functions of the coefficients of such polynomial, hence in particular algebraic functions of $p_r$. In the present case, where we can label the roots individually, it implies that $\alpha_1,\ldots,\alpha_r$ have Puiseux series expansions as $p_r \rightarrow 0$.

\medskip

Using the splitting between regular and escaping roots, Vi\`ete's formula yields for $i \in \{1,\ldots,r'\}$ 
$$
e_{i}(\alpha_1,\ldots,\alpha_{r'}) + e_i(\alpha_{r' + 1},\ldots,\alpha_{r}) + \sum_{\substack{i_1 + i_2 = i \\ i_1,i_2 \geq 1}} e_{i_1}(\alpha_{1},\ldots,\alpha_{r'})e_{i_2}(\alpha_{r' + 1},\ldots,\alpha_r) = \delta_{i,r'} (-1)^{r'} \frac{(r - r')p_{r - r'}}{rp_r}.
$$
We see this as a triangular system determining $e_i(\alpha_1,\ldots,\alpha_{r'})$ for $i \in \{1,\ldots,r'\}$, in the form
\begin{equation}
\label{eiaaa}
e_i(\alpha_1,\ldots,\alpha_{r'}) = \delta_{i,r'}(-1)^{r'} \frac{(r - r')p_{r - r'}}{rp_r}  + \sum_{\lambda \vdash i} C_{\lambda} e_{\lambda}(\alpha_{r' + 1},\ldots,\alpha_r)
\end{equation}
for some universal constants $C_{\lambda}$. Here, $e_{\lambda}$ are the elementary symmetric polynomials associated with a partition $\lambda$, and for $\lambda \vdash i$ they form a basis of the space of symmetric polynomials of degree equal to $i$.

\medskip

Next, we write the remaining Vi\`ete's formulae for $i \in \{r' + 1,\ldots,r\}$, in the form
$$
e_i(\alpha_{r' + 1},\ldots,\alpha_{r})  + \sum_{i' = 1}^{r'} e_{i'}(\alpha_1,\ldots,\alpha_{r'}) e_{i - i'}(\alpha_{r' + 1},\ldots,\alpha_r) = (-1)^{i} \frac{(r - i)p_{r - i}}{rp_r}.
$$
Then, we substitute \eqref{eiaaa} for $e_{i'}(\alpha_1,\ldots,\alpha_{r'})$, write $\alpha_j = \beta_j + \varkappa_j$ for $j \in \{r' + 1,\ldots,r\}$, plug an (unknown) formal power series expansion $\varkappa_j \approx \sum_{k \geq 1} \varkappa_{j,k} p_r^{k}$ in the equation, and extract the coefficient of $p_r^k$ for each $k \in \mathbb{Z}_{\geq 0}$. The result takes the form
$$
\forall i \in \{r'+1,\ldots,r\}\qquad \sum_{j = r' + 1}^{r} e_{i - 1}(\beta_{r' + 1},\ldots,\widehat{\beta_j},\ldots,\beta_{r}) \varkappa_{j,k} = \text{Polynomial}\big((\varkappa_{j,m})_{\substack{r' + 1 \leq j \leq r \\ 0 \leq m \leq k - 1}}\big).
$$
This is a invertible triangular system, since the matrix
$$
\big(e_{i - 1}(\beta_{r' + 1},\ldots,\widehat{\beta_j},\ldots,\beta_{r})\big)_{r' + 1 \leq i,j \leq r}
$$
has determinant proportional to the Vandermonde $\prod_{r' + 1 \leq j < l \leq r} (\beta_l - \beta_j)$ which is non-zero due to the assumption of distinct roots. By uniqueness of Puiseux series expansions, its solution determines the (formal power) series expansion for the regular roots.

\medskip

We now return to the Vi\`ete's formulae \eqref{eiaaa} indexed by $i \in \{1,\ldots,r'\}$ and insert there the power series expansion of the regular roots, the decomposition
$$
\forall j \in \{1,\ldots,r'\}\qquad \alpha_j = p_r^{-1/r'}\zeta_j + \varkappa_j,
$$
and a (unknown) regular formal Puiseux series expansion $\varkappa_j \approx \sum_{k \geq 0} \varkappa_{j,k} p_r^{k/r'}$. The leading term in the equation is of order $p_r^{-i/r'}$ and the equation is automatically satisfied at this order due to the properties of the $r'$-th roots. Extracting the coefficient of $p_r^{(k - i + 1)/r'}$ for some $k \in \mathbb{Z}_{\geq 0}$ yields a system of the form
$$
\forall i \in \{1,\ldots,r'\}\qquad \sum_{j = 1}^{r'} e_{i - 1}(\zeta_1,\ldots,\widehat{\zeta_j},\ldots,\zeta_{r'}) \varkappa_{j,k} = \text{Polynomial}\left((\varkappa_{j,m})_{\substack{1 \leq j \leq r \\ 0 \leq m \leq k - 1}}\,,\,(\varkappa_{j,m})_{\substack{r' + 1 \leq j \leq r \\ m \geq 0}}\right) .
$$
Since the $\zeta$s are pairwise distinct, this is again an invertible triangular system, whose solution determines uniquely the Puiseux series expansion of the escaping roots.
\end{proof}

 \subsubsection{The local involutions}
 \label{loclin}
 
 Recall that $\mathcal{R}$ is the set of zeros of $1 - zP'(z)$, and let $\alpha \in \mathcal{R}$. Compared to Section~\ref{TRreview}, it is convenient to recenter the local involution and define it as the holomorphic map  $z \mapsto \overline{z}$ locally defined near $0$ such that $x(\alpha + z) = x(\alpha + \overline{z})$ but $z \neq \overline{z}$ for $z \neq 0$. Equivalently, it is characterised by the condition $\dd z \neq {\dd \overline{z}}$ and
$$
z\cdot \frac{\prod_{\gamma \in \mathcal R \setminus \{\alpha\}} (z + \alpha - \gamma)}{z+\alpha}\,\dd z = \bar z \cdot \frac{\prod_{\gamma \in \mathcal R \setminus \{\alpha\}}(\bar z + \alpha - \gamma)}{\bar z+\alpha}\,\dd\bar z,
$$
We rewrite it as
\begin{equation}\label{eq:inv.1}
\frac {\bar z}{z}\cdot \frac{ \prod_{{\gamma \in \mathcal R \setminus \{\alpha\}}} (\bar z + \alpha - \gamma)}{\prod_{{\gamma \in \mathcal R \setminus \{\alpha\}}} (\bar z + \alpha - \gamma)} \cdot \frac{z + \alpha}{\bar z + \alpha } \cdot \frac{\dd \bar z}{\dd z } = 1.
\end{equation}
As a holomorphic function, this involution admits a series expansion
\begin{equation}
\label{barzoverz} \frac{{\bar z}}{z} \approx \sum_{k \geq 0}^\infty a_k z^k
\end{equation}
as $z \rightarrow 0$.  Studying the leading order of \eqref{eq:inv.1}, we get $a_0 = -1$.  We want to describe the structure of $(a_k)_{k \geq 1}$ as a function of the points in $\mathcal{R}$.

\begin{lemma}
\label{lem:involution} Keep the assumptions of Lemma~\ref{prop:regularroots.1} and assume as well that $\beta_{r' + 1},\ldots,\beta_{r}$ are non-zero (this is a generic condition). Then, for each $k \in \mathbb{Z}_{> 0}$, the coefficient $a_k$ has a regular Puiseux series expansion in the variable $p_r$.
\end{lemma}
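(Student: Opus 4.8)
The plan is to reduce the computation of the involution coefficients $a_k$ to a triangular recursion whose inputs are manifestly regular Puiseux series in $p_r$. First I would rewrite the defining relation $x(\alpha + z) = x(\alpha + \bar z)$ (equivalently \eqref{eq:inv.1}) in terms of the Taylor coefficients of $x$ at $\alpha$: writing $x(\alpha + z) - x(\alpha) = \sum_{m \geq 2} f_m z^m$, with $f_1 = x'(\alpha) = 0$ and $f_2 = \tfrac12 x''(\alpha) \neq 0$ since $\alpha$ is a simple zero of $x'$, the involution is characterised by $\sum_{m \geq 2} f_m(z^m - \bar z^m) = 0$, and after dividing by $f_2$ by $\sum_{m \geq 2} \tilde f_m (z^m - \bar z^m) = 0$ with $\tilde f_m := f_m/f_2$ and $\tilde f_2 = 1$. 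Thus the involution \eqref{barzoverz}, and hence every $a_k$, depends on $p_r$ only through the ratios $\tilde f_m$.

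The key step is to show that each $\tilde f_m$ is a regular Puiseux series in $p_r$. Using $x'(\alpha + z) = z\,g(z)$ with $g(z) = -rp_r(\alpha+z)^{-1}\prod_{\gamma \in \mathcal{R}\setminus\{\alpha\}}(\alpha + z - \gamma)$, one has $m f_m = [z^{m-2}] g(z)$, so that
\[
\tilde f_m = \frac{2}{m}\,[z^{m-2}]\,\frac{g(z)}{g(0)} = \frac{2}{m}\,[z^{m-2}]\left(\frac{1}{1 + z/\alpha}\prod_{\gamma \in \mathcal{R}\setminus\{\alpha\}}\Bigl(1 + \frac{z}{\alpha - \gamma}\Bigr)\right),
\]
which exhibits $\tilde f_m$ as a polynomial, with rational coefficients, in the finitely many quantities $1/\alpha$ and $1/(\alpha - \gamma)$, $\gamma \in \mathcal{R}\setminus\{\alpha\}$. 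It therefore suffices to prove that $1/\alpha$ and each $1/(\alpha - \gamma)$ is a regular Puiseux series in $p_r$, i.e. that these quantities stay bounded as $p_r \to 0$. This I would check by the case analysis afforded by Lemma~\ref{prop:regularroots.1} together with the extra genericity assumption: if $\alpha$ is a regular root then $1/\alpha$ is regular because its limit $\beta \neq 0$, while $1/(\alpha - \gamma)$ is regular because regular roots have distinct nonzero limits and $\alpha - \gamma \to \infty$ for escaping $\gamma$; if $\alpha$ is an escaping root then $1/\alpha \to 0$, and $\alpha - \gamma \to \infty$ for every $\gamma \neq \alpha$ — regular $\gamma$, or another escaping root since the $\zeta_j$ are pairwise distinct — so each $1/(\alpha-\gamma) \to 0$. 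In all cases the building blocks are regular Puiseux series, hence so is every $\tilde f_m$. Note that the individual $f_m$ may well blow up as $p_r \to 0$ at an escaping root; the point is that passing to the ratios $f_m/f_2$ cancels these poles, which is precisely the content of the displayed formula.

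Finally I would solve for the $a_k$ by extracting the coefficient of $z^k$ in $\sum_{m \geq 2}\tilde f_m z^{m-2}(1 - \phi(z)^m) = 0$, where $\bar z = z\,\phi(z)$ and $\phi(z) = -1 + \sum_{k\geq 1} a_k z^k$. Since $\phi(0) = -1$, the only term contributing $a_k$ linearly at order $z^k$ is $m = 2$, which produces $2a_k$; all other contributions involve only $a_1, \ldots, a_{k-1}$ and $\tilde f_3, \ldots, \tilde f_{k+1}$. This yields a triangular recursion $a_k = -\tfrac12\big(\text{polynomial with rational coefficients in } a_{<k} \text{ and the } \tilde f_m\big)$, with base case $a_1 = -\tilde f_3$. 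Since the ring of regular Puiseux series in $p_r$ is closed under the operations involved, induction on $k$ shows that every $a_k$ is a regular Puiseux series, as claimed. The main obstacle is the regularity of the building blocks at escaping roots in the crux step: a priori the Taylor data of $x$ at a root running to infinity degenerates, and one must exploit the ratio invariance of the involution together with the precise leading behaviour $\alpha_j \sim \zeta_j p_r^{-1/r'}$ from Lemma~\ref{prop:regularroots.1} to see that no negative powers of $p_r$ survive.
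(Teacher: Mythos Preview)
Your proposal is correct and follows essentially the same route as the paper: both arguments reduce to showing that each $a_k$ is a polynomial (with rational coefficients) in $\alpha^{-1}$ and $(\alpha-\gamma)^{-1}$ for $\gamma\in\mathcal{R}\setminus\{\alpha\}$, and then verify that these building blocks are regular Puiseux series via the case analysis afforded by Lemma~\ref{prop:regularroots.1}. The only cosmetic difference is that you work with the additive Taylor expansion $\sum_m f_m(z^m-\bar z^m)=0$ and the scale-invariant ratios $\tilde f_m=f_m/f_2$, whereas the paper manipulates the multiplicative form \eqref{eq:inv.1} directly; your explicit formula for $\tilde f_m$ makes the polynomial dependence on $\alpha^{-1}$ and $(\alpha-\gamma)^{-1}$ more transparent, but the content is the same.
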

\begin{proof} As $z \rightarrow 0$, we derive from \eqref{barzoverz} the expansion
$$
\frac{\dd\bar z}{\dd z} \approx \sum_{k \geq 0} (k+1) a_{k} z^k.
$$
and for any $b \in \mathbb{C}^*$
$$
 \frac{\bar z - b}{z-b} \approx \frac {-1}{b}\bigg(\sum_{k \geq 1} a_{k-1} z^{k} - b\bigg)\bigg(\sum_{m \geq 0} \frac{z^m}{b^m}\bigg) =  \sum_{k \geq 0} z^k\sum_{l = 0}^k \frac {-a_{l-1}(b)}{b^{k - l+1}} = 1 + O(z),
$$
where $a_l(b) = a_l$ for $l \geq 0$ and $a_{-1}(b) = -b$. Likewise:
$$
\frac{z - b}{\bar z - b} \approx \sum_{k \geq 0} z^k \sum_{\substack{m_1,m_2,\ldots \geq 0 \\ \sum_{i \geq 1} im_i = k}} \frac{\big(\sum_{i \geq 1} m_i\big)!}{\prod_{i \geq 1} m_i!} \cdot \prod_{i \geq 1} \bigg( \sum_{l = 0}^i \frac {a_{l-1}(b)}{b^{i - l + 1}}\bigg)^{m_i}.
$$
Equating the coefficients of the $z \rightarrow 0$ expansion on both sides of \eqref{eq:inv.1}, we infer from the listed expansions  that the $a_k$ for $k \geq 1$ are polynomials in $\alpha^{-1}$ and $(\alpha - \gamma)^{-1}$ for $\gamma \in \mathcal{R} \setminus \{\alpha\}$. In view of Lemma~\ref{prop:regularroots.1} and since we additionally assume that the regular roots have a non-zero limit, each of this quantity has a limit as $p_r \rightarrow 0$, and a Puiseux series expansion as $p_r \rightarrow 0$. Therefore, $a_k$ has a regular Puiseux series expansion as $p_r \rightarrow 0$.
\end{proof}

\subsubsection{The recursion kernel}
 
The first part of the topological recursion formula \eqref{Trformula} involves the recursion kernel at $\alpha \in \mathcal{R}$. Specialised to the spectral curve \eqref{spnung} and with the modified convention of Section~\ref{loclin} for the local involution, this is
\begin{equation*}
\begin{split}
K_{\alpha}(z_0,\alpha + z)\, \dd z\,\dd\overline{z} & = \frac{\dd z_0}{2}\bigg(\frac{1}{z_0 - (\alpha + z)} - \frac{1}{z_0 - (\alpha + \overline{z})}\bigg)\,\frac{\dd z\, \dd \overline{z}}{\big(y(\alpha + z) - y(\alpha + \overline{z})\big)\dd x(z)} \\ 
& = -\frac{1}{2}\,\frac{(z + \alpha)\,\dd z_0\,\dd \overline{z}}{rp_r z \prod_{\gamma \in \mathcal{R} \setminus \{\alpha\}} (z + \alpha - \gamma)} \,\frac{1}{(\alpha + z)^{s} - (\alpha + \overline{z})^s}\,\frac{z - \overline{z}}{(z_0 - z - \alpha)(z_0 - \overline{z} - \alpha)}.
\end{split} 
\end{equation*} 
Residues will be taken at $z =0$, and we included a factor $\dd z \dd \overline{z}$ as it will come up in the second part of \eqref{Trformula}.

\begin{lemma}
\label{lem:kernel}
For each $\alpha \in \mathcal{R}$, we have as $z \rightarrow 0$
$$
K_{\alpha}(z_0,\alpha + z)\, \dd z\,\dd \overline{z}\,\, \approx \sum_{k \geq -1} K_{\alpha,k}(z_0)\,z^{k}\, \dd z \,\dd z_0.
$$
The functions $K_{\alpha,m}(z_0)$ are polynomials in $(z_0 - \alpha)^{-1}$. Under the assumptions of Lemma~\ref{lem:involution} and assuming $s \in \{2,\ldots,r-1\}$, the coefficients of this polynomial have a regular Puiseux series expansion as $p_r \rightarrow 0$.
\end{lemma}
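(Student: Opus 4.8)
The plan is to first rewrite the kernel so that the dependence on $p_r$ becomes transparent, and then to read off the three assertions—pole order, polynomiality in $(z_0-\alpha)^{-1}$, and Puiseux regularity—from the individual factors. The key simplification removes the spurious $p_r^{-1}$: since $\mathcal R$ is exactly the zero set of $g(w):=1-wP'(w)$, Vi\`ete's formula gives $rp_r\,z\prod_{\gamma\in\mathcal R\setminus\{\alpha\}}(\alpha+z-\gamma)=-g(\alpha+z)$, so the first, apparently $p_r^{-1}$-singular, factor collapses to $-(\alpha+z)/g(\alpha+z)$ with no explicit $p_r^{-1}$. Likewise, factoring $w^s-\overline w^{\,s}=(w-\overline w)\sum_{i=0}^{s-1}w^{s-1-i}\overline w^{\,i}$ with $w=\alpha+z$, $\overline w=\alpha+\overline z$ cancels the numerator $z-\overline z=w-\overline w$. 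Writing $\dd\overline z=\tfrac{\dd\overline z}{\dd z}\dd z$, the kernel becomes, up to the constant $\tfrac12$, the product of
\[
\text{(A)}=\frac{\alpha+z}{g(\alpha+z)},\quad \text{(B)}=\frac{1}{\sum_{i=0}^{s-1}w^{s-1-i}\overline w^{\,i}},\quad \text{(C)}=\frac{1}{(z_0-w)(z_0-\overline w)},\quad \text{(D)}=\frac{\dd\overline z}{\dd z}.
\]

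From this factorisation the first two assertions follow quickly. By the simple-zeros hypothesis of Lemma~\ref{prop:regularroots.1}, $g$ vanishes simply at $\alpha$, so (A) has a simple pole at $z=0$ while (B), (C), (D) are holomorphic there, (B) being nonzero with value $(s\alpha^{s-1})^{-1}$ (here $\alpha\neq0$ by the hypothesis $\beta_j\neq0$ of Lemma~\ref{lem:involution} for regular roots, and trivially for escaping roots); hence the product has a simple pole and the expansion starts at $k=-1$. The $z_0$-dependence sits only in (C): expanding $\frac{1}{(z_0-\alpha-z)(z_0-\alpha-\overline z)}=\sum_{a,b\geq0}z^a\overline z^{\,b}(z_0-\alpha)^{-(a+b+2)}$ and inserting $\overline z=z\sum_{k\geq0}a_kz^k$ shows that each $z$-order receives only finitely many powers of $(z_0-\alpha)^{-1}$, so every $K_{\alpha,k}(z_0)$ is a polynomial in $(z_0-\alpha)^{-1}$ whose coefficients are universal polynomial expressions in the Laurent coefficients of (A), the Taylor coefficients of (B), and the involution coefficients $a_k$ (which also feed into (C) and (D)).

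It then remains to bound the $p_r$-order of those coefficients. The $a_k$ are regular Puiseux by Lemma~\ref{lem:involution}, and for a \emph{regular} root one has $\alpha\to\beta_j\neq0$, $g'(\alpha)\to g'(\beta_j)\neq0$ and $\alpha^{s-1}\to\beta_j^{s-1}\neq0$, so all coefficients of (A) and (B) are regular Puiseux and the claim is immediate. The real work is at the \emph{escaping} roots $\alpha\sim\zeta_j p_r^{-1/r'}\to\infty$, where the coefficients of (A) and (B) genuinely carry negative powers of $p_r^{1/r'}$. I would handle these by power counting in $\alpha$. Writing $g'(\alpha)=-rp_r\prod_{\gamma\neq\alpha}(\alpha-\gamma)$ and splitting the product into regular and escaping factors gives $\mathrm{ord}_{p_r}(g'(\alpha))=1-(r-1)/r'$ without cancellation, hence $A_{-1}=\alpha/g'(\alpha)$ has order $p_r^{(r-2)/r'-1}$, while $B_0=(s\alpha^{s-1})^{-1}$ has order $p_r^{(s-1)/r'}$. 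Since the nearest competing singularity of (A) (another ramification point) and of (B) (a zero of the $s$-sum) both lie at distance $\Theta(\alpha)$ from $z=0$—using that the $\zeta_j$ are pairwise distinct—the higher coefficients scale as $A_k\sim A_{-1}\alpha^{-(k+1)}$ and $B_k\sim B_0\alpha^{-k}$, giving $\mathrm{ord}_{p_r}(A_k)\geq\frac{r-1+k}{r'}-1$ and $\mathrm{ord}_{p_r}(B_k)\geq\frac{s-1+k}{r'}$. Any coefficient of $z^k(z_0-\alpha)^{-l}$ is a sum of products $A_{k_1}B_{k_2}\times(\text{regular Puiseux from (C), (D)})$ with $k_1\geq-1$, $k_2\geq0$, so its order is at least
\[
\frac{r-1+k_1}{r'}-1+\frac{s-1+k_2}{r'}\ \geq\ \frac{r+s-3}{r'}-1\ \geq\ \frac{r+s-3}{r-1}-1\ =\ \frac{s-2}{r-1}\ \geq\ 0,
\]
where the middle step uses $r'\leq r-1$ and the last uses $s\geq2$; thus every coefficient is regular Puiseux in $p_r$.

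The hard part, and the only non-formal ingredient, is precisely this escaping-root estimate: individually (A) and (B) diverge as $p_r\to0$, and it is only their product—balanced exactly at the threshold $s=2$, $r'=r-1$—that stays regular, which is where the hypothesis $s\geq2$ enters essentially (for $s=1$ the bound fails, consistently with that case being covered separately by \cite{bych-dun-kaz-sha}). Making the heuristic rigorous amounts to upgrading the leading scalings $A_k\sim A_{-1}\alpha^{-(k+1)}$ and $B_k\sim B_0\alpha^{-k}$ to the stated order bounds for all $k$, which I would justify from the $\Theta(\alpha)$ separation of the ramification points and of the zeros of the $s$-sum provided by Lemma~\ref{prop:regularroots.1}.
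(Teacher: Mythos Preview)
Your argument is correct in structure and lands on exactly the same critical inequality as the paper: the leading coefficient of the kernel at an escaping root has $p_r$-order $(r+s-3)/r'-1\ge(s-2)/(r-1)\ge0$, which is where $s\ge2$ enters. Your rewriting $rp_r\,z\prod_{\gamma\neq\alpha}(\alpha+z-\gamma)=-g(\alpha+z)$ via Vi\`ete is a clean way to make the apparent $p_r^{-1}$ disappear; the paper does not do this explicitly but works with the equivalent product form.

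The one place where you diverge from the paper---and where you leave a gap you yourself flag as heuristic---is the $k$-dependent scaling $\mathrm{ord}_{p_r}(A_k)\ge(r-1+k)/r'-1$ and $\mathrm{ord}_{p_r}(B_k)\ge(s-1+k)/r'$. These stronger bounds are in fact unnecessary. The paper simply factors out the leading constant $c_\alpha=-\tfrac12 A_{-1}B_0=\dfrac{\alpha^{2-s}}{2srp_r\prod_{\gamma\neq\alpha}(\alpha-\gamma)}$ and writes the kernel as $c_\alpha\,z^{-1}(z_0-\alpha)^{-2}\tilde K_\alpha(z_0,z)$, where $\tilde K_\alpha$ is normalised to $1+O(z)$. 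Each Taylor coefficient of $\tilde K_\alpha$ is then a \emph{polynomial} in $\alpha^{-1}$, $(\alpha-\gamma)^{-1}$ and the involution coefficients $a_k$---all of which are individually regular Puiseux by Lemma~\ref{prop:regularroots.1} and Lemma~\ref{lem:involution}---so $\tilde K_\alpha$ is manifestly regular without any Cauchy-type estimate on radii of convergence. In your language this amounts to the weaker (and immediate) bounds $\mathrm{ord}_{p_r}(A_k)\ge\mathrm{ord}_{p_r}(A_{-1})$ and $\mathrm{ord}_{p_r}(B_k)\ge\mathrm{ord}_{p_r}(B_0)$, which already give $\mathrm{ord}_{p_r}(A_{k_1}B_{k_2})\ge(r+s-3)/r'-1\ge0$ and close the argument. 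So your proof becomes complete (and identical to the paper's) once you replace the heuristic $\Theta(\alpha)$-separation reasoning by this normalisation.
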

\begin{proof}
We rewrite $K_{\alpha}(z_0,\alpha + z)\,\dd z\,\dd \overline{z} = c_{\alpha}  z^{-1} (z_0 - \alpha)^{-2}\,\tilde{K}_{\alpha}(z_0,z) \dd z_0 \dd z$, where
\begin{equation}
\label{Ktildetal}
\begin{split} 
c_{\alpha} & := \frac{\alpha^{2 - s}}{2srp_r \prod_{\gamma \in \mathcal{R}\setminus \{\alpha\}} (\alpha - \gamma)}, \\
\tilde{K}_{\alpha}(z_0,z) & =  \frac{1 + z/\alpha}{\prod_{\gamma \in \mathcal{R} \setminus \{\alpha\}} \left(1 + \frac{z}{\alpha - \gamma}\right)}\cdot \frac{2sz}{\alpha \big((1 + z/\alpha)^{s} - (1 + \overline{z}/\alpha)^s\big)}\cdot \frac{1 - \overline{z}/z}{\left(1 - \frac{z}{z_0 - \alpha}\right)\left(1 - \frac{\overline{z}}{z_0 - \alpha}\right)} \cdot \frac{-\dd \overline{z}}{\dd z}.
\end{split}
\end{equation}

If $\alpha$ is a regular root, we have as $p_r \rightarrow 0$
$$
\frac{1}{\prod_{\gamma \in \mathcal{R} \setminus \{\alpha\}} (\alpha - \gamma)} = O(p_r)
$$
because there are $r'$ escaping roots behaving like $O(p_r^{-1/r'})$ contributing to the product. Under the assumptions of Lemma~\ref{lem:involution}, the numerator $\alpha^{2 - s}$ has a non-zero limit and thus $c_{\alpha} = O(1)$. If $\alpha$ is an escaping root, we rather have 
$$
\frac{1}{\prod_{\gamma \in \mathcal{R} \setminus \{\alpha\}} (\alpha - \gamma)}  = O(p_r^{(r - 1)/r'}).
$$
Then, $\alpha^{2 - s} = O(p_r^{-(2-s)/r'})$ and we obtain $c_{\alpha} = O(p_r^{(-r' + s - 2 + r - 1)/r'})$. The maximal value of $r'$ being $r - 1$, this gives $c_{\alpha} = O(p_r^{(s -2)/r'})$. Hence, for $s \geq 2$, we always have $c_{\alpha} = O(1)$.

\medskip

The function $\tilde{K}_{\alpha}(z_0,z)$ has a power series expansion in the variable $z \rightarrow 0$, and we claim that its coefficients are polynomial in $\alpha^{-1}$, $(\alpha - \gamma)^{-1}$ for $\gamma \in \mathcal{R} \setminus \{\alpha\}$, and $(z_0 - \alpha)^{-1}$. Indeed, the proof of Lemma~\ref{lem:involution} showed that the coefficients $(a_k)_{k \geq 1}$ of expansion of $\overline{z}$ in the $z$-variable have this type of property. We also compute as $z \rightarrow 0$
$$
\alpha \big((1 + z/\alpha)^{s} - (1 + \overline{z}/\alpha)^{s}\big) = \sum_{l = 1}^{s}  {s \choose l} \alpha^{1 - l} (z^l - \overline{z}^l) \approx 2sz + \sum_{k \geq 1} A_k z^{k + 1},
$$
where $A_k$ is a polynomial in the variables $(a_l)_{l \geq 1}$. Therefore, as $z \rightarrow 0$:
\begin{equation}
\label{1zs}
\frac{2sz}{\alpha\big((1 +  z/\alpha)^{s} - (1 + \overline{z}/\alpha)^{s}\big)} \approx 1 + \sum_{k \geq 1} z^k \sum_{\substack{m,k_1,\ldots,k_m \geq 1 \\ k_1 + \cdots + k_m = k}} \frac{(-1)^m}{(2s)^m} \prod_{i = 1}^{m} A_{k_i},
\end{equation}
and the coefficient of $z^k$ there is a polynomial in $(a_k)_{k \geq 1}$ and in $\alpha^{-1}$. The other factors in \eqref{Ktildetal} are easier to analyse. This shows the claim. In the proof of Lemma~\ref{lem:involution} we showed that $\alpha^{-1}$ and $(\alpha - \gamma)^{-1}$ have a regular Puiseux series expansion in the variable $p_r$. Therefore, the coefficient of $z^k$ in $\tilde{K}_{\alpha}(z_0,z)$ is a polynomial in $(z_0 - \alpha)^{-1}$ whose coefficients have regular Puiseux series expansions as $p_r \rightarrow 0$. Since we already proved that $c_{\alpha}$ has this property, this proves the lemma.
\end{proof}

\subsubsection{The free energies}

For $2g - 2 + n > 0$, the free energies $F_{g,n}(z_1,\ldots,z_n)$ are rational functions on the Riemann sphere with poles at $z_i = \alpha$ for some $\alpha \in \mathcal{R}$ and $i \in \{1,\ldots,n\}$, and zeros at $z_i = 0$. We can therefore decompose them as
\begin{equation}
\label{freener}
F_{g,n}(z_1,\ldots,z_n) = \sum_{\substack{1 \leq j_1,\ldots,j_n \leq r \\ k_1,\ldots,k_n \geq 0}} G_{g,n}\big[\begin{smallmatrix} j_1 & \cdots & j_n \\ k_1 & \cdots & k_n \end{smallmatrix}\big] \prod_{i = 1}^n \bigg(\frac{1}{(z_i - \alpha_{j_i})^{k_i + 1}} - \frac{1}{(-\alpha_{j_i})^{k_i + 1}}\bigg),
\end{equation}
where only finitely many coefficients are non-zero. In general, if $u$ is an independent variable, let us write as $z \rightarrow 0$
$$
\frac{1}{z - u} - \frac{1}{-u} \approx  \sum_{m \geq 1} E_m(u) e^{mx(z)}.
$$ 
We compute for any $m \geq 1$:
\begin{equation*}
\begin{split}
E_m(u) & = \Res_{z = 0} e^{-(m + 1)x(z)}\big(\frac{1}{z - u} - \frac{1}{-u}\bigg) \dd(e^{x(z)}) = -\frac{1}{m} \Res_{z = 0} \frac{e^{-mx(z)}\dd z}{(z - u)^2} \\
& = - \frac{1}{m} \Res_{z = 0} \frac{z^{-m}e^{-m \sum_{j = 1}^{r} p_j z^j}}{(z - u)^2} = - \frac{1}{m} \big[z^{-(m - 1)}\big]\,\frac{e^{- m\sum_{j = 1}^{r} p_j z^j}}{(z - u)^2}.
\end{split}
\end{equation*}
Without making it more explicit, we can already tell that $E_m(u)$ is a polynomial in the variables $1/u$ and $p_1,\ldots,p_r$. Then, by differentiating $k$ times with respect to $u$, we get
$$
\frac{1}{(z - u)^{k + 1}} - \frac{1}{(-u)^{k + 1}} \approx \sum_{m \geq 1} \frac{E_m^{(k)}(u)}{k!}\,e^{mx(z)},
$$
and $\partial_u^k E_m(u)$ is also a polynomial in the variables $1/u$ and $p_1,\ldots,p_r$. This allows us extracting the expansion of the free energies as $z_i \rightarrow 0$ in the variables $e^{x(z_i)}$ :
$$
F_{g,n}(z_1,\ldots,z_n) \approx \sum_{\mu_1,\ldots,\mu_n > 0} H_{g,n}(\mu_1,\ldots,\mu_n) \prod_{i = 1}^n e^{\mu_i x(z_i)},
$$
where
$$
H_{g,n}(\mu_1,\ldots,\mu_n) = \sum_{\substack{1 \leq j_1,\ldots,j_n \leq r \\ k_1,\ldots,k_n \geq 0}} G_{g,n}\big[\begin{smallmatrix} j_1 & \cdots & j_n \\ k_1 & \cdots & k_n \end{smallmatrix}\big] \prod_{i = 1}^{n} \frac{E_{\mu_i}^{(k_i)}(\alpha_{j_i})}{k_i!}
$$
and we recall there are only finitely many non-zero terms in this sum. Since $\alpha^{-1}$ has a regular Puiseux series expansion as $p_r \rightarrow 0$ for any $\alpha \in \mathcal{R}$, it is sufficient to check that the coefficients $C_{g,n}\big[\begin{smallmatrix} j_1 & \cdots & j_n \\ k_1 & \cdots & k_n \end{smallmatrix}\big]$ have a regular Puiseux expansion as $p_r \rightarrow 0$ to conclude the same for $H_{g,n}(\mu_1,\ldots,\mu_n)$. 

\medskip

We first compute the cases $2g -2  +n = 1$. For $(g,n) = (0,3)$ we have
\begin{equation*}
\begin{split}
\omega_{0,3}(z_1,z_2,z_3) & = \sum_{\alpha \in \mathcal{R}} \Res_{z = 0} K_{\alpha}(z_1,\alpha + z) \bigg(\frac{\dd z\,\dd \overline{z}\,\dd z_2\,\dd z_3}{(\alpha + z - z_2)^2(\alpha + \overline{z} - z_3)^2}  + (z_2 \leftrightarrow z_3)\bigg)   \\ 
& = \sum_{\alpha \in \mathcal{R}} \frac{2c_{\alpha}}{(z_1 - \alpha)^2(z_2 - \alpha)^2(z_3 - \alpha)^3} \\
& = \dd_{z_1}\dd_{z_2}\dd_{z_3}\bigg(\sum_{\alpha \in \mathcal{R}}  - 2c_{\alpha}\prod_{i = 1}^3 \big((z_1 - \alpha)^{-1} - (-\alpha^{-1})\big)\bigg).
\end{split}
\end{equation*}
In other words $G_{0,3}\big[\begin{smallmatrix} j_1 & j_2 & j_3 \\ k_1 & k_2 & k_3 \end{smallmatrix}\big] = -2\delta_{j,j_1,j_2,j_3} \delta_{k_1,k_2,k_3,0} c_{\alpha_{j}}$. For $(g,n) = (1,1)$, we have
\begin{equation*}
\begin{split} 
\omega_{1,1}(z_1) & = \sum_{\alpha \in \mathcal{R}} \Res_{z = 0} K_{\alpha}(z_1,\alpha + z) \frac{\dd z\,\dd \overline{z}}{(z - \overline{z})^2} \\
& = \sum_{\alpha \in \mathcal{R}}  \Res_{z = 0} \bigg(\frac{K_{\alpha,-1}(z_1)}{z} + K_{\alpha,0}(z_1) + K_{\alpha,1}(z_1) z + O(z^2)\bigg)\bigg(\frac{1}{4z^2} + \frac{a_1}{4z} + \frac{a_1^2 + 2a_2}{8} + O(z)\bigg)\dd z \dd z_1 \\
& = \frac{1}{4} K_{\alpha,-1}(z_1)+ \frac{a_1}{4} K_{\alpha,0}(z_1) + \frac{a_1^2 + 2a_2}{8} K_{\alpha,1}(z_1).
\end{split}
\end{equation*} 
in terms of the coefficients of expansion of the involution \eqref{barzoverz}.
Then, the Lemmata~\ref{lem:involution} and \ref{lem:kernel} guarantee that the entries of $G_{1,1}$ and $G_{0,3}$ have regular Puiseux series expansion as $p_r \rightarrow 0$.

\medskip

The topological recursion formula \eqref{Trformula} implies a recursion on $2g - 2 + n > 0$ for the $G_{g,n}$, which we can initialise with $G_{0,3}$ and $G_{1,1}$ --- this is closely related to the Airy structure form of the topological recursion \cite{KSTR,TRABCD}. To describe its structure, we first need to compute as $z \rightarrow 0$
\begin{equation}
\label{dun1}
\omega_{0,2}(z_i,\alpha + z)  = \frac{\dd z_i \dd z}{(z - z_i)^2}  \approx \frac{(k + 1)z^k}{(z_i - \alpha)^{k + 2}},
\end{equation}
and
\begin{equation}
\label{dun2}
\omega_{0,2}(z_i,\alpha + \overline{z}) \approx \sum_{k \geq 0} \frac{(k + 1)\overline{z}^k}{(z_i - \alpha)^{k + 2}} \approx \sum_{k \geq 0} z^k \left(\sum_{l = 0}^{k} \frac{-(l + 1)W_{\alpha,k,l}}{(z_i - \alpha)^{l + 2}}\right), 
\end{equation} 
where $W_{\alpha,k,l}$ are polynomial in the variables $a_1,a_2,\ldots$. Besides, for $2g - 2 + n > 0$ the relation between the correlators and the free energies \eqref{freener} yields:
\begin{equation}
\label{dun3}
\omega_{g,n}(z_1,\ldots,z_n) = \sum_{\substack{1 \leq j_1,\ldots,j_n \leq r \\ k_1,\ldots,k_n \geq 0}} C_{g,n}\big[\begin{smallmatrix} j_1 & \cdots & j_n \\ k_1 & \cdots & k_n \end{smallmatrix}\big] \bigotimes_{i = 1}^{n} \frac{-(k_i + 1)\dd z_i}{(z_i - \alpha_{j_i})^{k_i + 2}}.
\end{equation} 
Inserting \eqref{dun1}-\eqref{dun2}-\eqref{dun3} in the topological recursion formula \eqref{Trformula}, we get
\begin{equation*}
\begin{split} 
& \quad G_{g,1 + n}\big[\begin{smallmatrix} j_0 & \cdots & j_n \\ k_0 & \cdots & k_n \end{smallmatrix}\big] \\
& = \sum_{i = 1}^n \sum_{\substack{1 \leq j \leq r \\ k \geq 0}} B\big[\begin{smallmatrix} j_0 & j_i & j \\ k_0 & k_i & k \end{smallmatrix}\big] G_{g,n}\big[\begin{smallmatrix} j & j_1 & \cdots & \widehat{j_i} & \cdots & j_n  \\ k & k_1 & \cdots & \widehat{k_i} & \cdots & k_n \end{smallmatrix}\big] \\
& \quad + \sum_{\substack{1 \leq j,j' \leq r \\ k,k'\geq 0}}\frac{C\big[\begin{smallmatrix} j_0 & j & j' \\ k_0 & k & k' \end{smallmatrix}\big]}{2}\left(G_{g-1,n+2}\big[\begin{smallmatrix} j & j' & j_1 & \cdots & j_n \\ k & k' & k_1 & \cdots & k_n \end{smallmatrix}\big] +  \!\!\!\sum_{\substack{I \sqcup I' = \{1,\ldots,n\} \\ h + h' = g}} \!\! G_{h,1+|I|}\big[\begin{smallmatrix} j & (j_i)_{i \in I} \\ k & (k_i)_{i \in I}\end{smallmatrix}\big] \cdot G_{h',1+|I'|}\big[\begin{smallmatrix} j' & (j_i)_{i \in I'} \\ k' & (k_i)_{i \in I'} \end{smallmatrix}\big]\right),
\end{split}
\end{equation*} 
with the conventions $G_{0,1} = 0$ and $G_{0,2} = 0$. The recursion coefficients $B$ and $C$ are computed as the coefficients of $-(k_0 + 1)(z_0 - \alpha_{j_0})^{-(k_0 + 2)}$ in the respective expressions
$$
\delta_{j_0,j_i} \Res_{z = 0} K_{\alpha_{j_0}}(z_0,z + \alpha_{j_0})\left((k_i + 1)z^{k_i} \cdot \frac{-(k + 1)}{(\overline{z} + \alpha_{j_0} - \alpha_{j})^{k + 2}}  + \sum_{k' \geq k_i} z^{k'} W_{\alpha_{j_0},k',k_i} \cdot \frac{-(k + 1)}{(\overline{z} + \alpha_{j_0} - \alpha_{j})^{k + 2}}\right)
$$
and
$$
\Res_{z = 0} K_{\alpha_{j_0}}(z_0,\alpha_{j_0} + z) \left( \frac{(k + 1)}{(z + \alpha_{j_0} - \alpha_j)^{k + 2}} \cdot \frac{(k' + 1)}{(\overline{z} + \alpha_{j_0} - \alpha_{j'})^{k' + 2}} + (z \leftrightarrow \overline{z})\right).
$$
We had to distinguish between the $B$-terms and the $C$-terms due to the two different expansions for $\omega_{0,2}$ and the other $\omega_{g',n'}$. Only finitely many terms of the sum over $k'$ can contribute in $B$ because we are extracting a given power of $(z_0 - \alpha_{j_0})^{-1}$. With the help of Lemmata~\ref{lem:involution} and \ref{lem:kernel} and arguing as in the previous paragraphs, one can infer that $B$ and $C$ are polynomials in $\alpha^{-1}$ and $(\alpha - \gamma)^{-1}$ for $\alpha \neq \gamma$ in $\mathcal{R}$, and we deduce that they admit an regular Puiseux series expansion as $p_r \rightarrow 0$. By induction on $2g - 2 + n > 0$, we can then conclude that, under the Assumptions of Lemma~\ref{lem:kernel}, the $G_{g,n}$ also have regular Puiseux series expansion as $p_r \rightarrow 0$.

 \medskip

We are now in position to conclude the proof of Proposition~\ref{polynom3}. Recall that in the setting of this Proposition we have $p_{r - 1} = \cdots = p_{r - s + 1} = 0$. We have seen in \eqref{Hgn3rddd} that for any given partition $\mu$ of length $n$, the coefficient $H_{g,n}^{\text{3rd}}(\mu_1,\ldots,\mu_n)$ has a Puiseux expansion in the variable $p_r$, whose coefficients are polynomials in the variables $p_1,\ldots,p_{r - s}$. Let us focus on the coefficient of a fixed negative power of $p_r$ in a fixed $H_{g,n}^{\text{3rd}}(\mu_1,\ldots,\mu_n)$.  This coefficient is determined by the value it takes on finitely many tuples of complex numbers $(p_1,\ldots,p_{r - s})$; how many depends on $g,n$ and the power of $p_r$ one looks at. In particular, we choose this amount of finitely many tuples so that all of them satisfy the assumptions of the Lemma~\ref{lem:kernel}. We then conclude from the previous paragraph that the coefficient of the negative power of $p_r$ under consideration in $H_{g,n}^{\text{3rd}}(\mu)$ must vanish. This proves Proposition~\ref{polynom3}.

\vspace{1cm}
\section{Vanishing of single $\Omega$-integrals: theorems and experiments}
\label{sec:Omega:vanishing}
\vspace{0.5cm}

Vanishing of integrals of $\Omega$-classes usually arise for particular geometric reasons. In this section we discuss the single integral vanishing that were known and the new ones that we have experimentally found by computing many examples of the same form of the ones appearing in this article.

\medskip

A certain class of vanishing was obtained in the work of Johnson, Pandharipande, and Tseng \cite[Theorem 2]{JPT} with a geometric proof. The statement in the original paper is expressed in terms of integrals over the space of admissible covers, which becomes the integral of a $\Omega$-class after taking the pushforward to the moduli space of stable maps, as proved in \cite{lew-pop-sha-zvo17}. We give below the pushforwarded version adapted to our notation.

\begin{theorem}\label{thm:JPT} \cite{JPT} Let $r = s \in \mathbb{Z}_{> 0}$ and $g \in \mathbb{Z}_{\geq 0}$. Let $\mu$ be a non-empty partition, $n > 0$ and $b_1,\ldots,b_n \in \{0,\ldots,r - 1\}$ such that $|\mu| = \sum_{i = 1}^{n} b_j\,\,\text{mod}\,\,r$. Assume that \emph{at least one} of the following conditions hold:
\begin{itemize}
\item[(i)] The negativity condition $|\mu| < \sum_{i = 1}^n b_j$ and the boundedness condition $\max_{i \neq j}(b_i + b_j) \leq r$.
\item[(ii)] The strong negativity condition $(r - 1)|\mu| + \sum_{i = 1}^{n} b_i < rn$.
\end{itemize}
Then, we have the vanishing
\begin{equation}\label{eq:jptvanishing}
\int_{\MM_{g, \ell(\mu)+n}} \frac{\Omega^{(r,s)}_{g; -\overline{\mu}; b_1, \ldots, b_n}}{ \prod_{i=1}^{\ell(\mu)}(1 - \frac{\mu_i}{r}\psi_i)} = 0.
\end{equation}
\end{theorem}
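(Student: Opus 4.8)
The plan is to treat the two sufficient conditions separately, since they are of genuinely different natures and only one of them is within reach of the deformation machinery developed in this paper.

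\textbf{Condition (i).} First I would observe that, after setting $r=s$, the negativity and boundedness hypotheses of (i) are \emph{exactly} the hypotheses of the single-integral vanishing \eqref{bvanish} of Theorem~\ref{th:A} in the case $d=1$. Concretely, write each nonzero insertion as $b_j=s-\nu_j$, so that $\nu=(r-b_{j})_{b_j\geq 1}$ is an $(s-1)$-partition. Boundedness $\min_{i\neq j}(\nu_i+\nu_j)\geq s$ then reads $\max_{i\neq j}(b_i+b_j)\leq r$, and since $|\nu|=r\,\ell(\nu)-\sum_j b_j$ the inequality $|\mu|+|\nu|\leq s(\ell(\nu)-1)$ rearranges (using $r=s$) into $|\mu|+r\leq\sum_j b_j$, i.e.\ exactly the negativity $|\mu|<\sum_j b_j$. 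Note that $\nu$ is automatically non-empty whenever (i) holds, because $\mu$ non-empty forces $\sum_j b_j>|\mu|\geq 1$; in particular the degenerate case $r=s=1$ (all $b_j=0$) never satisfies (i). Thus \eqref{bvanish} settles (i) as soon as every $b_j\in\{1,\ldots,r-1\}$.

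The one remaining subtlety in (i) is the presence of insertions with $b_j=0$. Since $0\equiv r=s\bmod r$, such an insertion is the flat unit $v_s$ of the cohomological field theory of Theorem~\ref{CohFTl}. I would remove these one at a time using the flat-unit axiom $\Omega^{(r,s)}_{g,n+1}(\cdots\otimes v_s)=\pi^{\ast}\Omega^{(r,s)}_{g,n}(\cdots)$, where $\pi$ forgets the corresponding marked point, together with the comparison $\psi_i=\pi^{\ast}\psi_i+D_i$ on the $\mu$-points, which is where the only real bookkeeping lies. After eliminating all unit insertions one lands in the case with every remaining $b_j\geq 1$ already covered by \eqref{bvanish}, and one checks that the rearranged inequalities above are preserved under the reduction (the $b_j=0$ entries contribute nothing to $\sum_j b_j$ and relax neither the negativity nor the boundedness constraint).

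\textbf{Condition (ii), and the main obstacle.} This is where I expect the real difficulty. The strong-negativity inequality $(r-1)|\mu|+\sum_i b_i<rn$ is \emph{not} of the form produced by the deformation arguments of Sections~\ref{sec:Deformation:one}--\ref{sec:Deformation:three}: those only annihilate coefficients of \emph{negative} powers of a deformation parameter, which reproduces the negativity regime (i) but carries no information about (ii). The natural conceptual route I would take is enumerative: by Theorem~\ref{rsELSV} the integral is, up to an explicit nonzero normalisation, a coefficient of the $r$-orbifold double Hurwitz correlator, i.e.\ a weighted count of covers of $\mathbb{P}^1$ with prescribed ramification over $0$, $\infty$ and the extra branch data. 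One then wants to argue that under strong negativity this count is \emph{empty}, so the coefficient — and hence the integral — vanishes; equivalently, that the relevant piecewise-polynomial Hurwitz function has support disjoint from the chamber singled out by (ii). The hard part is precisely turning the analytic inequality into such an emptiness (or support/degree) statement: this requires the geometric input of the orbifold $\mathbb{P}^1$ model and effectively reconstructs the virtual-localisation argument of Johnson--Pandharipande--Tseng, which lies outside the present framework. I would therefore either invoke their geometric proof directly for (ii), or, to keep within topological recursion, I would need a separate structural estimate on the support of the correlators $\omega_{g,n}^{(r,r)}$ that the deformation method used here does not supply.
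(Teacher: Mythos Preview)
Your assessment is essentially correct, and it aligns precisely with what the paper itself says about this theorem. Note however that the paper does \emph{not} prove Theorem~\ref{thm:JPT}: it is quoted from \cite{JPT} as background, with the original reference carrying a geometric (virtual localisation) proof. The paper's own commentary immediately after the statement is exactly your analysis: it observes that condition~(i) is the $d=1$ specialisation of Theorem~\ref{th:A} (equivalently the \cite{BDKLM} result), while it explicitly states that ``the strong negativity condition does not seem to play a role in our approach, so should be considered as a vanishing relation of a different nature than Theorem~\ref{th:A}.'' So for condition~(ii) there is nothing to prove within this paper's framework, and your diagnosis that one must fall back on the Johnson--Pandharipande--Tseng argument is the honest conclusion.

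Your treatment of~(i) adds a detail the paper does not spell out: the reduction of insertions $b_j=0$ via the flat unit. This is correct and is handled cleanly by the string equation in Theorem~\ref{thm:properties}(v), which gives
\[
\int_{\overline{\mathcal{M}}_{g,\ell(\mu)+n}} \frac{\Omega^{(r,r)}_{g;-\overline{\mu},b_1,\ldots,b_{n-1},0}}{\prod_{i}(1-\tfrac{\mu_i}{r}\psi_i)}
=\frac{|\mu|}{r}\int_{\overline{\mathcal{M}}_{g,\ell(\mu)+n-1}} \frac{\Omega^{(r,r)}_{g;-\overline{\mu},b_1,\ldots,b_{n-1}}}{\prod_{i}(1-\tfrac{\mu_i}{r}\psi_i)},
\]
since the $b$-points carry no $\psi$-denominator. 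Iterating strips all zero insertions with a nonzero scalar factor; the modular constraint $|\mu|\equiv\sum_j b_j\pmod r$ then forces at least two nonzero $b_j$ to remain (one alone would force $|\mu|=b_1$, contradicting negativity), so Theorem~\ref{th:A} applies to the reduced integral. One small correction: in your rearrangement you should read the hypothesis of Theorem~\ref{th:A} as $|\mu|+|\nu|\le s(\ell(\nu)-1)$, matching~\eqref{munuaa}; the occurrence of $\ell(\mu)$ in the introduction's statement of Theorem~\ref{th:A} is a typo.
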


The statement replacing Theorem~\ref{thm:JPT} in absence of (i) was obtained in \cite{BDKLM} and is the special case $d = 1$ of Theorem~\ref{th:A}: it says that a linear combination of $\Omega$-integrals vanish, and it contains a single term whenever the boundedness condition is satisfied. The strong negativity condition does not seem to play a role in our approach, so should be considered as a vanishing relation of a different nature than Theorem~\ref{th:A}.

\medskip

The theorem above holds as well for $s=0$, since it gives the same $\Omega$-class as $s = r$. The correct tuning of the parameter $s$, especially in relation with the $r$ parameter, has played an important role in the applications of $\Omega$-classes such as for instance Hurwitz theory \cite{kra-lew-pop-sha19,bor-kra-lew-pop-sha20,dun-kra-pop-sha19-2}, the double ramification cycle \cite{JPPZ17}, Masur-Veech volumes \cite{CMS+19}, and the Euler characteristic of $\mathcal{M}_{g,n}$ \cite{GLN20}. It is therefore natural to ask whether a generalisation of Theorem~\ref{thm:JPT} for general $s$ exists.

\medskip

For this purpose, we have run computations of integrals of the form \eqref{eq:jptvanishing} for  $\dim_{\mathbb{C}}(\MM_{g,n}) \leq 5$ using the Sage package \textsc{admcycles} \cite{admcycles}. First, the computations suggest that for $\mu = (1)$ one could drop conditions (i) and (ii) of Theorem \ref{thm:JPT}, in the sense that the integral in \eqref{eq:jptvanishing} vanishes nevertheless:
\begin{equation}\label{eq:initialguess}
\forall s \in \{0,r\}\qquad  \int_{\MM_{g,1+n}} \frac{\Omega^{(r,s)}_{g; r-1,b_1, \ldots ,b_n}}{\left(1 - \frac{1}{r}\psi_1\right)} = 0
\end{equation}
for any $b_1,\ldots,b_n \in \{0,\ldots,r-1\}$. Additionally, the vanishing \eqref{eq:initialguess} holds as well for $s=-1$. The case $s=-1$ is a particular case of the observed vanishing:
\begin{equation}\label{eq:s=-mu}
\forall s \in \{-r+1,\ldots,-1\}\qquad \int_{\MM_{g,1+n}} \frac{\Omega^{(r,s)}_{g; |s|,b_1, \ldots, b_n}}{\left(1 + \frac{s}{r}\psi_1\right)} = 0.
\end{equation} 

The latter can be proved as an specialisation of the statement (a) for $  - r < s < 0$ in the following vanishing result.
\begin{theorem} \cite{GLN20}
\label{thm:PADrel}
Let $r \in \mathbb{Z}_{> 0}$ and $s \in \mathbb{Z}$, let $g,n \in \mathbb{Z}_{\geq 0}$  such that $2g - 2 + n > 0$, and $b_1,\ldots,b_n \in \{0,\ldots,r-1\}$ such that $(2g - 2 + n)s = \sum_{i=1}^n b_i \,\,\text{mod}\,\,r$. Denote $\langle s \rangle \in \{0,\ldots,r-1\}$ the remainder of the Euclidean division of $s$ by $r$.  Then, the following properties hold
\begin{itemize} 
\item[(a)] If $s < 0$ we have:
\begin{equation}
\int_{\overline{\mathcal{M}}_{g,1+n}}
\!
		\frac{ \Omega^{(r,s)}_{g; \langle s \rangle, b_1, \ldots, b_n}}{
			\prod_{m = 0}^{-\lfloor s/r \rfloor - 1} \left( 1 -  \left( - \frac{s}{r} -  m \right)	 \psi_1 \right) 
		} = 0.
\end{equation}

\item[(b)] If $\; 0 \leq s \leq r$ we have:
\begin{equation}
\int_{\overline{\mathcal{M}}_{g,1+n}}
		\!\!\!\!\! \Omega^{(r,s)}_{g; s, b_1, \ldots, b_n}= 0.
\end{equation}

\item[(c)] If $s \geq r$ we have:
\begin{equation}
\int_{\overline{\mathcal{M}}_{g,1+n}}
		\!\!\!\!\! \Omega^{(r,s)}_{g; \langle s \rangle, b_1, \ldots, b_n}
			\prod_{m = 1}^{\lfloor s/r \rfloor} \left( 1 +  \left( \frac{s}{r} - m \right)	 \psi_1 \right) = 0.
\end{equation}
\end{itemize}
\end{theorem}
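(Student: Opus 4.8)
The plan is to treat the three cases uniformly, deducing (a) and (c) from the flat-unit case (b) by shifting the parameter $s$ by multiples of $r$. Case (b) is essentially immediate: for $s\in\{0,\dots,r\}$ the class $\Omega^{(r,s)}$ is a CohFT with flat unit $v_s$ (Theorem~\ref{CohFTl}), and the flat-unit axiom says precisely that inserting $v_s$ amounts to pulling back along the forgetful map $\pi\colon\overline{\mathcal{M}}_{g,1+n}\to\overline{\mathcal{M}}_{g,n}$ that drops the first point, i.e. $\Omega^{(r,s)}_{g;s,b_1,\dots,b_n}=\pi^\ast\Omega^{(r,s)}_{g;b_1,\dots,b_n}$. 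Since $\Omega^{(r,s)}_{g;b}\in H^{\le 2(3g-3+n)}(\overline{\mathcal{M}}_{g,n})$ and $\pi^\ast$ preserves cohomological degree, the pullback has no component in the top degree $2(3g-2+n)=2\dim_{\mathbb{C}}\overline{\mathcal{M}}_{g,1+n}$, so its integral vanishes; equivalently $\pi_\ast\pi^\ast(-)=(-)\cdot\pi_\ast 1=0$.

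First I would establish a one-step shift relation $\Omega^{(r,s)}_{g;a}=\Omega^{(r,s-r)}_{g;a}\cdot\exp\!\big(\sum_{m\ge1}\tfrac{(1-s/r)^m}{m}\kappa_m\big)$, valid for the same insertions $a$ (the modular condition \eqref{eq:acondition} depends only on $s\bmod r$, hence is unchanged). This follows directly from Chiodo's formula \eqref{eqn:Omega:formula}: among the three terms of $\ch_m(R^\bullet\pi_\ast\mathcal L)$, only the coefficient $B_{m+1}(s/r)$ of $\kappa_m$ depends on $s$, while the $\psi_i$- and node-terms are unchanged. Using $B_{m+1}(u)-B_{m+1}(u-1)=(m+1)(u-1)^m$ with $u=s/r$ gives $\ch_m(s)-\ch_m(s-r)=\tfrac{(s/r-1)^m}{m!}\kappa_m$; inserting this into the exponential \eqref{eqn:Omega} and applying the projection formula for $\epsilon_\ast$ (using that the $\kappa$-classes on the moduli of roots are pulled back from $\overline{\mathcal{M}}_{g,n}$) yields the identity, since $(-1)^m(m-1)!\tfrac{(s/r-1)^m}{m!}=\tfrac{(1-s/r)^m}{m}$.

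To finish case (c), with $s\ge r$ and $k=\lfloor s/r\rfloor$, I would iterate the shift relation $k$ times down to the parameter $\langle s\rangle\in\{0,\dots,r-1\}$, producing $\Omega^{(r,s)}_{g;\langle s\rangle,b}=\Omega^{(r,\langle s\rangle)}_{g;\langle s\rangle,b}\cdot\prod_{j=1}^{k}\exp\!\big(\sum_{m}\tfrac{(j-s/r)^m}{m}\kappa_m^{(1+n)}\big)$. Here $\Omega^{(r,\langle s\rangle)}_{g;\langle s\rangle,b}=\pi^\ast\Omega^{(r,\langle s\rangle)}_{g;b}$ by case (b). Converting with the Arbarello--Cornalba relation $\kappa_m^{(1+n)}=\pi^\ast\kappa_m^{(n)}+\psi_1^m$ (for $\pi$ forgetting the first point) and using $\sum_{m\ge1}\tfrac{(-x\psi_1)^m}{m}=-\log(1+x\psi_1)$, each factor becomes $\exp(\sum_m\tfrac{(-x_j)^m}{m}\kappa_m^{(1+n)})=\pi^\ast(\dots)\cdot\frac{1}{1+x_j\psi_1}$ with $x_j=s/r-j$. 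Multiplying by the statement's factor $\prod_{m=1}^{k}(1+(s/r-m)\psi_1)$ cancels all the denominators $1+x_j\psi_1$ exactly, so the whole integrand equals $\pi^\ast$ of a class on $\overline{\mathcal{M}}_{g,n}$ and the integral vanishes by the degree argument of case (b). Case (a), with $s<0$ and $k=-\lfloor s/r\rfloor$, is identical after iterating the shift upward: the factors produced are $1-(-s/r-m)\psi_1$, which are precisely cancelled against the denominator $\prod_{m=0}^{k-1}(1-(-s/r-m)\psi_1)$ appearing in the statement, again leaving a pullback.

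The main obstacle will be the rigorous justification of the geometric inputs rather than the algebra, which is a short generating-series manipulation once they are in place: namely that the $\kappa$-classes entering Chiodo's formula on $\overline{\mathcal{M}}^{(r,s)}_{g;a}$ are pullbacks of those on $\overline{\mathcal{M}}_{g,n}$ (so that the projection formula applies in the shift relation), together with the precise form of the flat-unit and Arbarello--Cornalba relations on the moduli of roots. I would also take care that the iterated shifts keep all insertions in the fundamental range $\{0,\dots,r-1\}$, so that \eqref{eqn:Omega:formula} applies at every step and the flat unit $v_{\langle s\rangle}$ of $\Omega^{(r,\langle s\rangle)}$ matches the first insertion; the remaining sign and coefficient bookkeeping distinguishing (a) from (c) is then routine.
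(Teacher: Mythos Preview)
The paper does not give its own proof of this theorem: it is simply quoted from \cite{GLN20}. Your argument is correct and is, in outline, precisely the proof given there: case~(b) is the flat-unit/pullback property, and cases~(a) and~(c) are reduced to it by iterating the $s\mapsto s\pm r$ shift and splitting the resulting $\kappa$-exponentials via $\kappa_m^{(1+n)}=\pi^\ast\kappa_m^{(n)}+\psi_1^m$, which converts them into pullback factors times the exact product of $(1\pm(\cdot)\psi_1)$ appearing in the statement.

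One simplification you can make: you need not rederive the shift-in-$s$ relation from Chiodo's formula, nor worry about whether the $\kappa$-classes on the moduli of roots are $\epsilon$-pullbacks. Both the shift relation and the pullback (flat-unit) property are already recorded in the paper as Theorem~\ref{thm:properties}\,(i),(iv) (and their iterated form~(I)), so you can cite them directly. With these in hand, your reduction is a two-line computation: write $\Omega^{(r,s)}_{g;\langle s\rangle,b}=\Omega^{(r,\langle s\rangle)}_{g;\langle s\rangle,b}\cdot\exp\big(\sum_{m\ge1}\tfrac{(-1)^m}{m}p_m(\tfrac{\langle s\rangle}{r},\dots,\tfrac{s}{r}-1)\kappa_m^{(1+n)}\big)$ (or the analogous product for $s<0$), replace $\Omega^{(r,\langle s\rangle)}_{g;\langle s\rangle,b}$ by $\pi^\ast\Omega^{(r,\langle s\rangle)}_{g;b}$, and use the Arbarello--Cornalba relation to peel off the factor $\prod_j(1\pm x_j\psi_1)^{\mp1}$. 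Your identification of the cancelling products is accurate in both cases.
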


\medskip

Let us now consider the integrals 
\begin{equation}
\int_{\MM_{g,1+n}} \frac{\Omega^{(r,s)}_{g; -\overline{\mu}_1,b_1, \ldots, b_n}}{\left(1 - \frac{\mu_1}{r} \psi_1\right)} 
\end{equation} 
for fixed $g,n,r,s$ as functions of positive integers $\mu_1$. We have observed numerically that, for $s < 0$, these integrals not only vanish for $\mu_1 = -s$ as discussed above, but most of the vanishing happen around that value: for $\mu_1 = -s+1, -s+2$, etc. Sometimes, for a few values $\mu_1 < -s$, many of the tuples $(b_1, \ldots, b_n)$ produce vanishing, with the amount of tuples leading to vanishing fading out away from the value $\mu_1 = -s$. It can happen that for some $\mu_1 = -s + a$ with $a$ a small positive integer, \textit{all} tuples $b_1,\ldots,b_k$ give vanishing. If only some tuples produce vanishing and some do not, there seems to be an boundedness-type condition
$$
\max_{i \neq j} (b_i + b_j) \leq C_{g,n}^{(r,s)}(\mu_1)
$$
which is sufficient to produce vanishing --- we already know the specialisation $C_{g,n}^{(r,r)}(\mu_1) = r$ from Theorem~\ref{thm:JPT}. If such a bound exists it should depend in a non trivial way on $\mu_1$ and on the genus, and this phenomenon goes undetected for $s=r$. However, in a few cases, we encounter counterexamples. For instance, in genus one for $(r,s) = (6,-3)$ and $\mu_1 = 6$, we have computed in genus $1$
$$
\int_{\MM_{1,5}} \frac{\Omega^{(6,-3)}_{1; 0; 5,4,4,2}}{(1 + \psi_1)}  = 0, 
\qquad \qquad \qquad 
\int_{\MM_{1,5}} \frac{\Omega^{(6,-3)}_{1; 0; 5,4,3,3}}{(1 + \psi_1)}  \neq 0,
$$
despite the two tuples $\boldsymbol{b} = (5,4,4,2)$ and $\boldsymbol{b} = (5,4,3,3)$ having the same $\max \limits_{i \neq j}(b_i + b_j)$, and even the same $\sum_{i} b_i$. As a last observation, we have not encountered any vanishing for $s < -r$.

\newpage
\appendix

\section{Properties and symmetries of $\Omega$-classes}
\label{sec:Omega:properties}

This section contains a collection of previously known and basic properties of the $\Omega$-classes as defined in \eqref{eqn:Omega}.

\begin{theorem} \cite{GLN20} \label{thm:properties}
	Fix $g,n \in \mathbb{Z}_{\geq 0}$ such that $2g - 2 + n > 0$. Let $(r,s) \in \mathbb{Z}_{> 0} \times \mathbb{Z}$ and $a_1,\ldots,a_n \in \{0,\ldots,r-1\}$ such that $a_1+a_2+\cdots+a_n = (2g-2+n)s \,\,\text{mod}\,\,r$. The $\Omega$-classes satisfying the following properties
	\begin{itemize}
		\item[(i)]
		Shift of $s$:
		\begin{equation}
			\Omega^{(r,s)}_{g;a_1,\ldots,a_n}(u) = 
			\Omega^{(r,s)}_{g;a_1,\ldots,a_n}(u) \cdot \exp\Biggl( \sum_{m \ge 1} \frac{(-u)^m}{m} \left(\frac{s}{r}\right)^m \kappa_m \Biggr).
		\end{equation}
		\item[(ii)]
		Shift of $a_i$:
		\begin{equation}
			\Omega^{(r,s)}_{g;a_1,\ldots,a_{i - 1},a_i + r,a_{i + 1},\ldots,a_n}(u) 
			=
			\Omega^{(r,s)}_{g;a_1,\ldots,a_n}(u) \cdot\left( 1 + u\frac{a_i}{r}\psi_i\right).
		\end{equation}
		\item[(iii)]
		Zero and $r$-symmetry:
		\begin{equation}
		\begin{split}
			\Omega^{(r,0)}_{g;a_1,\ldots,a_n} & = \Omega^{(r,r)}_{g;a_1,\ldots,a_n},
	\\
			\Omega^{(r,s)}_{g;a_1,\ldots,a_{i - 1},0,a_{i + 1},\ldots,a_n} & = \Omega^{(r,s)}_{g;a_1,\ldots,a_{i - 1},r,a_{i + 1},a_n}.
		\end{split}
		\end{equation}
		\item[(iv)]
		Pullback property. Denoting $\pi : \overline{\mathcal{M}}_{g,n + 1} \rightarrow \overline{\mathcal{M}}_{g,n}$ the forgetful morphism, we have
		\begin{equation}
			\Omega^{(r,s)}_{g;a_1,\ldots,a_n} = \pi^{\ast} \Omega^{(r,s)}_{g;a_1,\ldots,a_n,s}.
			\end{equation}
		\item[(v)]
		\textup{(String equation).} For formal variables $u_1, \ldots, u_{n+1}$ we have:
		\begin{equation}
			\int_{\overline{\mathcal{M}}_{g, n+1}} \frac{\Omega^{(r,s)}_{g;a_1,\ldots,a_n,s}}{\prod_{i=1}^{n+1} (1 - u_i \psi_i)} \Bigg{|}_{u_{n+1} = 0}
				=
				(u_1 + \cdots + u_n) \int_{\overline{\mathcal{M}}_{g, n}} \frac{\Omega^{(r,s)}_{g;a_1,\ldots,a_n}}{\prod_{i=1}^n (1 - u_i \psi_i)}.
		\end{equation}

		\item[(vi)] 
		\textup{(Dilaton equation).} For formal variables $u_1, \ldots, u_{n+1}$ we have:
		\begin{equation}\label{eqn:dilaton:Omega}
			\frac{\partial}{\partial u_{n+1}}
			\int_{\overline{\mathcal{M}}_{g, n+1}} \frac{\Omega^{(r,s)}_{g;a_1,\ldots,a_n,s}}{\prod_{i=1}^{n+1} (1 - u_i \psi_i)} \Bigg{|}_{u_{n+1} = 0}
			=
			(2g - 2 + n) \int_{\overline{\mathcal{M}}_{g, n}} \frac{\Omega^{(r,s)}_{g;a_1, \ldots, a_{n}}}{\prod_{i=1}^n (1 - u_i \psi_i)}.
		\end{equation}
	\end{itemize}
\end{theorem}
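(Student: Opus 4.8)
I would deduce all six properties from Chiodo's formula \eqref{eqn:Omega:formula} for the Chern characters, combined with two elementary facts about Bernoulli polynomials and with the cohomological field theory structure of Theorem~\ref{CohFTl}. The two facts I would invoke are the shift relation $B_{m+1}(x+1) - B_{m+1}(x) = (m+1)x^m$ and its specialisation at $x=0$, namely $B_{m+1}(1) = B_{m+1}(0)$ for $m \geq 1$. The guiding observation is that in \eqref{eqn:Omega:formula} the parameter $s$ enters only through the term $\tfrac{B_{m+1}(s/r)}{(m+1)!}\kappa_m$, each $a_i$ only through $-\tfrac{B_{m+1}(a_i/r)}{(m+1)!}\psi_i^m$, and that the relevant $\kappa_m$ and $\psi_i$ on the root stack are $\epsilon$-pullbacks of the corresponding classes on $\overline{\mathcal{M}}_{g,n}$. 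Consequently, after applying $\epsilon_\ast$ in \eqref{eqn:Omega}, any correction factor built from these classes factors out by the projection formula.

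\textbf{Properties (i)--(iii).} For the shift of $s$ I would compare $s$ with $s+r$: only the $\kappa_m$-term changes, and the shift relation gives $\ch_m^{(r,s+r)} - \ch_m^{(r,s)} = \tfrac{(s/r)^m}{m!}\kappa_m$. Inserting this difference into the exponent of \eqref{eqn:Omega} contributes the extra summand $\tfrac{(-u)^m}{m}(s/r)^m\kappa_m$; since these $\kappa_m$ are $\epsilon$-pullbacks, the exponential of $\epsilon^\ast\big(\sum_m \tfrac{(-u)^m}{m}(s/r)^m\kappa_m\big)$ factors out of $\epsilon_\ast$ and yields (i). For the shift of $a_i$ I would compare the universal roots for types $a_i$ and $a_i+r$, which differ by the twist $\mathcal{O}(-\mathcal{S}_i)$ at the $i$-th universal section; the analogous computation changes only the $\psi_i^m$-term, by $-\tfrac{(a_i/r)^m}{m!}\psi_i^m$. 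Summing the resulting exponent over $m \geq 1$ telescopes into a logarithm, $\exp\!\big(-\sum_{m\geq 1}\tfrac{(-u a_i\psi_i/r)^m}{m}\big) = 1 + u\tfrac{a_i}{r}\psi_i$, which is (ii). Property (iii) is then immediate: specialising (i) at $s=0$ and (ii) at $a_i=0$ makes both correction factors trivial, so $\Omega^{(r,r)} = \Omega^{(r,0)}$ and $\Omega^{(r,s)}_{\ldots,r,\ldots} = \Omega^{(r,s)}_{\ldots,0,\ldots}$.

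\textbf{Properties (iv)--(vi).} Property (iv) is the flat-unit axiom of the CohFT: Theorem~\ref{CohFTl} identifies $v_s$ as the flat unit for $s \in \{0,\ldots,r\}$, and the flat-unit axiom states exactly $\Omega^{(r,s)}_{g,n+1}(\,\cdots \otimes v_s) = \pi^\ast\Omega^{(r,s)}_{g,n}(\cdots)$. Alternatively I would verify it directly, by checking that forgetting a marked point of type $s$ pulls back the twisted log-canonical bundle, the $-s\,p_{n+1}$ twist cancelling the extra $+p_{n+1}$ carried by $\omega_{\log}$. Granting (iv), the string and dilaton equations follow by the standard route: write $\Omega^{(r,s)}_{g;a,s} = \pi^\ast\Omega^{(r,s)}_{g;a}$, extract this factor from the integral over $\overline{\mathcal{M}}_{g,n+1}$ by the projection formula, and compute the pushforward of the $\psi$-generating series. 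For (v) I would set $u_{n+1}=0$ and use $\pi_\ast\prod_{i=1}^n(1-u_i\psi_i)^{-1} = \big(\sum_{j=1}^n u_j\big)\prod_{i=1}^n(1-u_i\psi_i)^{-1}$; for (vi) I would take the $u_{n+1}$-derivative at $0$ (extracting the coefficient of $\psi_{n+1}$) and use the dilaton pushforward, which produces the factor $2g-2+n$. Both rest on the comparison $\psi_i^{(n+1)} = \pi^\ast\psi_i^{(n)} + D_{i,n+1}$ and the intersection behaviour of the boundary divisors $D_{i,n+1}$.

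\textbf{Main obstacle.} The only genuinely geometric inputs are (iv) and the comparison of universal roots under the twist $\mathcal{O}(-\mathcal{S}_i)$ needed for (ii); everything else is a formal consequence of the two Bernoulli identities and the projection formula. I expect the care to be concentrated in extending Chiodo's formula \eqref{eqn:Omega:formula} outside the fundamental range $\{0,\ldots,r-1\}$ for (ii) --- that is, justifying that the honest value $B_{m+1}(a_i/r+1)$ governs the twisted pushforward --- and in the precise boundary bookkeeping behind the $\psi$-class pushforward formulas for (v) and (vi).
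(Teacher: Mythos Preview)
The paper does not actually prove this theorem: it is stated in the appendix with a citation to \cite{GLN20} and no argument is given (the text immediately moves on to iterating (i)--(ii)). So there is no ``paper's own proof'' to compare against; your sketch is in effect supplying what the paper outsources.

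Your approach is the standard one and is essentially correct. A few comments. First, note that (i) as printed in the paper is a typo (it reads $\Omega^{(r,s)} = \Omega^{(r,s)}\cdot\exp(\cdots)$); you correctly interpret it as comparing $s$ with $s+r$. Likewise (iv) as printed has the pullback on the wrong side; your version $\Omega^{(r,s)}_{g;a,s} = \pi^\ast \Omega^{(r,s)}_{g;a}$ is the meaningful one. Second, your CohFT argument for (iv) via Theorem~\ref{CohFTl} only covers $s \in \{0,\ldots,r\}$, whereas the statement is for arbitrary $s \in \mathbb{Z}$; to cover all $s$ you should either run your direct geometric argument (the $-s\,p_{n+1}$ twist cancelling the extra log-pole), or reduce to the fundamental range via (i) and observe that the $\kappa$-correction factor is also a $\pi$-pullback (since $\kappa_m$ on $\overline{\mathcal{M}}_{g,n+1}$ differs from $\pi^\ast\kappa_m$ by $\psi_{n+1}^m$, which does not intervene once you are comparing the class with insertion $s$). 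Third, your ``main obstacle'' is well identified: the delicate point is indeed making sense of Chiodo's formula for $a_i$ outside $\{0,\ldots,r-1\}$, which is exactly what the twist by $\mathcal{O}(-\mathcal{S}_i)$ and Grothendieck--Riemann--Roch handle.
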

	
Iterating (i) and (ii), one finds for any $N \in \mathbb{Z}_{> 0}$
	\begin{enumerate}[(I)]
		\item\label{prop:mult:shifts:s}
		Multiple shifts of $s$:
$$
\Omega^{(r,s + Nr)}_{g;a_1,\ldots,a_n}(u) = \Omega^{(r,s)}_{g;a_1,\ldots,a_n}(u) \cdot \exp\Biggl( \sum_{m \ge 1} \frac{(-u)^m}{m} p_m\left(\frac{s}{r}, \ldots, \frac{s}{r} + N - 1 \right) \kappa_m \Biggr),
$$
		where $p_m$ is the sum of $m$-th powers.
		\item\label{prop:mult:shifts:ai}
		Multiple shifts of $a_i$:
		\begin{equation}
			\Omega^{(r,s)}_{g;a_1,\ldots,a_{i - 1},a_i + Nr,a_{i + 1},\ldots,a_n}(u)
			=
			\Omega^{(r,s)}_{g;a_1,\ldots,a_n}(u) \cdot \prod_{m = 0}^{N-1} \left( 1 + u\left(\frac{a_i}{r} + m \right) \psi_i\right).
		\end{equation}
	\end{enumerate}

\medskip

Another interesting property, which only holds for $r=1$, is a relation between two different parametrisations of $\Omega$-classes, that may be called \emph{Segre} and \emph{Chern}:
\begin{equation}
\Omega^{(1,1-s)}_{g;0,\ldots,0}(u) = \big(\Omega^{(1,s)}_{g;0,\ldots,0}(u)\big)^{-1}.
\end{equation}
It has been proved and used in \cite{CMS+19}. The relation that one might expect from Serre duality applied to an $r$-th root of $\omega_{\log}^{\otimes s}\big(-\sum_{i = 1}^{n} a_i p_i\big)$, \textit{i.e.} $\Omega^{(r,r - s)}_{g;r-a_1,\ldots,r-a_n}(u) = \big(\Omega^{(r,s)}_{g;a_1,\ldots,a_n}(u)\big)^{-1}$, is in fact false. As a counterexample, in topology $(g,n) = (1,2)$ we have
$$
\Omega^{(2,1)}_{g=1;0,2}(u) \cdot \Omega^{(2,1)}_{g=1;2,0}(-u) =  1- \frac{3}{4}u^2\kappa_2.
$$
Nevertheless, in our numerical experiments we have observed the vanishing
$$
\forall k \in \mathbb{Z}_{\geq 0} \qquad \big[\Omega^{(r,r-s)}_{g;r - a_1,\ldots,r - a_n}(-u) \cdot \Omega^{(r,s)}_{g;a_1,\ldots,a_n}(u)\big]_{\text{deg}\,2(2k + 1)} = 0.
$$

\newcommand{\etalchar}[1]{$^{#1}$}

\end{document}